\documentclass[11pt,reqno,a4paper,psamsfonts]{amsart}
\usepackage[utf8]{inputenc}
\usepackage[T1]{fontenc}
\usepackage[english]{babel}
\usepackage{amssymb,amsmath,amscd,enumerate,amsthm}
\usepackage[psamsfonts]{amsfonts}
\usepackage{latexsym}
\usepackage{amsbsy}
\usepackage{ulem}
\usepackage[left=3.5cm,right=3.5cm,top=3.5cm,bottom=3.5cm]{geometry}
%\usepackage[left=3cm,right=3cm,top=5cm,bottom=4cm]{geometry}
%%%%%%%%%%TODO%%%%%%%%%
\usepackage[hidelinks]{hyperref}
\usepackage{todonotes}
\usepackage{frcursive}
\usepackage{pdfsync}
\usepackage{xcolor,hyperref}
\usepackage{relsize,exscale}
\hypersetup{backref,colorlinks=true,linkcolor=blue,citecolor=blue}
%\usepackage[mathscr]{eucal}
%%%%%%%%%%%%%%%%%%%%%%%%%%%%
% packages pour mac :
%\usepackage[latin1]{inputenc}
\usepackage{pdfsync}
% fin packages pour mac.  %
%%%%%%%%%%%%%%%%%%%%%%%%%%%
\usepackage{enumerate}
\usepackage{verbatim}
\usepackage{color}
\usepackage[all,color ]{xy}
\usepackage{graphicx}
%%%%%%%%%%%%%%%%%%
%\usepackage{enumitem}
%CommandesDavid
%%%%%%%%%%%%%%%%%%%%%%%%%%

%
%Marked foliations
\newcommand{\point}{\mathbf{\cdot}}
\newcommand{\diam}{\diamond}
%
%}
\newcommand{\C}{{\mathbb{C}}}

\newcommand{\F}{{\mathcal{F}}}
\newcommand{\R}{{\mathbb{R}}}%
\newcommand{\Z}{{\mathbb{Z}}}

\newcommand{\tz}{{\tilde{z}}}
\newcommand{\tu}{{\tilde{u}}}

%%%%%%%%%%%%%%%%%%%%%%%%%%%
%

%
%%%%%pour que la commande \mathcal%%%%%%%%%%%%%%%
%%%%%ne concerne que le premier mot qui suit%%%%%
%%%%%%%%%%%%%%%%%%%%%%%%%%%%%%%%%%%%%%%%%%%%%%%%%
\let\CAL=\mathcal%
\def\mathcal#1{{\CAL#1}}%
%%%%%%%%%%%%%%%%%%%%%%%%%%%%%%%%%%%%%%%%%%%%%%%%%
%%%%%%%%%%%%%%%%%%%%%%%%%%%%%%%%%%%%%%%%%%%%%%%%%
%%%%%%%%%% Commandes 2014 %%%%%%%%%%%%%%%%%%%%%%
\newcommand{\N}{\mathbb{N}}
%%%%%%%%%%%%%%%%%%%%%%%%%%%%%%%%%%%%%%%%%%%%%%%%%
\newcommand{\Ve}{{\mathsf{Ve}}}
\newcommand{\Ed}{{\mathsf{Ed}}}

\let\ge\eg

\newcommand{\A}{{\mathsf{A}}}

%%%%%%%%petit \mc E%%%%%%%%

%%%%%%%%%

%%

%

%

%

%

%
%

%!TEX encoding = UTF-8 Unicode

%

%foliation spaces, %Marked foliations spaces
%\newcommand{\Fe}{\mf{F}}
%foliation spaces, %Marked foliations spaces
%\newcommand{\MFe}{\mf{MF}}
%foliation spaces, %Marked foliations spaces

%foliation spaces, %Marked foliations spaces

%teichmuller space
%{\msl{TFol_{tr}}}

\newcommand{\Cex}{$\mc C^{\mr{ex}}$}
\newcommand{\Ctop}{$\mc C^0$}

\newcommand{\G}{\mathcal{G}}

\renewcommand{\emph}[1]{{\textbf{#1}}}
\newcommand{\un}[1]{{\underline{#1}}}
\newcommand{\ov}[1]{{\overline{#1}}}

\newcommand{\mf}[1]{{\mathfrak{#1}}}
\newcommand{\mr}[1]{{{\mathrm{#1}}}}
\newcommand{\mc}[1]{{\mathcal{#1}}}
\newcommand{\mb}[1]{{\mathbb{#1}}}
\newcommand{\wt}[1]{{\widetilde{#1}}}

\newcommand{\mbf}[1]{\mathbf{#1}}
\let\msf\msl
%\newcommand{\wF}{\widetilde{\mathcal{F}}}

%%taille
\newcommand{\iso}{{\overset{\sim}{\longrightarrow}}}

%
%
%Diffeomorphismes

%Int\'{e}rieur : une petite boule dessus%%
%\newcommand{\inte}[1]{\stackrel{{}_{\scriptstyle\circ}}{{#1}}}

%
%dérivée partielle
\newcommand{\DD}[1]{\frac{\partial\phantom{ #1}}{\partial #1}}
%%%%%%%%%%%%%%%%%%%%%%%%%%%%%%%%%%
%%%%%%%%%%%%%%%%%%%%%%%%%%%%%%%%%%%%%%%%%%%%%%%%%
%\newcounter{thmm}
%\newtheorem*{thm}{\textsc{Main Theorem}}
\newtheorem{teo}{Theorem}[section]
\newtheorem{thmm}{Theorem}

\newtheorem{lema}[teo]{Lemma}

\newtheorem{prop}[teo]{Proposition}
\newtheorem{defin}[teo]{Definition}
\newtheorem{cor}[teo]{Corollary}

\newtheorem{obs2}[teo]{Remark}
\newtheorem{recap2}[teo]{Recapitulation}
\newtheorem{ex2}[teo]{Example}

\newenvironment{obs}{\begin{obs2}\rm}{\hfill\qed\end{obs2}}

\newenvironment{dem}{\begin{proof}[Proof]}{\end{proof}}
\newenvironment{dem2}[1]{\begin{proof}[Proof #1]}{\end{proof}}
%%%%%%%%%%%%%%%%%%%%%%%%%%
%BIBLIOGRAPHIE
%pour les articles avec indication des pages
\def\bibartp#1#2#3#4#5#6#7#8
{\bibitem{#1} {\sc #2}, {\it #3}, {#4}, {\bf t. #5}, pages { #6} to
{ #7}, {({#8})}}
%EXPLICATION : COPIER CE QUI SUIT ET REMPLIR LES CASES
%\bibartp{label}
%{auteur(s)}
%{titre}
%{revue}
%{volume}{premi\`{e}re page}{derni\`{e}re page}{ann\'{e}e}
%
%***************************
%pour les articles sans indication des pages
\def\bibart#1#2#3#4#5#6
{\bibitem{#1} {\sc #2}, {\it #3}, {#4}, {\bf #5}, {({#6})}}
%EXPLICATION : COPIER CE QUI SUIT ET REMPLIR LES CASES
%\bibart{label}
%{auteur(s)}
%{titre}
%{revue}
%{volume}{ann\'{e}e}
%*************************
%pour les livres
\def\bibliv#1#2#3#4#5
{\bibitem{#1} {\sc #2}, {\it #3}, {#4}, {({#5})}}
%EXPLICATION : COPIER CE QUI SUIT ET REMPLIR LES CASES
%\bibliv{label}
%{auteur(s)}
%{titre}
%{\'{e}diteur}
%{ann\'{e}e}
%*****
%pour les articles non publi\'{e}s
\def\bibaart#1#2#3#4
{\bibitem{#1} {\sc #2}, {\it #3}, {#4}}
%EXPLICATION : COPIER CE QUI SUIT ET REMPLIR LES CASES
%\bibaart{label}
%{auteur(s)}
%{titre}
%
%pour les articles annonc\'{e}s
%%%%%%%%%%%%%%%%%%%%%%%%%%%%%%%%%%%%%%%%%%%%%%%%%
%\small
%%%%%%%%%%%%%%%double interligne%%%%%%%%%%%%%%%%%
%\renewcommand{\baselinestretch}{2}
%%%%%%%%%%%%%%%%%%%%%%%%%%%%%%%%%%%%%%%%%%%%%%%%%
                       % CORRECTIONS :
% CORRECTIONS ENCAISSEES :
%\newcommand{\correction}[1]{\frame{#1}}
%
% CORRECTIONS EN COULEUR :
%\usepackage[dvipsnames]{xcolor}
%\xdefinecolor{vertgreen}{named}{vertGreen}
\definecolor{vert}{rgb}{0,0.46,0}
%newcommand{\corjf}[1]{#1}

%\newcommand{\jf}[1]{{#1}}

%\newcommand{\JF}[1]{\colorbox[rgb]{1.00,1.00,0.37}{\textcolor[rgb]{0.87,0.00,0.00}{#1}}}

%\newcommand{\jf}[1]{\textcolor[rgb]{1.00,0.00,0.00}{#1}}
%\newcommand{\com}[1]{\textcolor[rgb]{0.00,0.51,0.09}{#1}}
%\newcommand{\dav}[1]{\textcolor[rgb]{0.50,0.00,0.25}{#1}}
%\newcommand{\correction}[1]{\textcolor{red}{#1}}
%\newcommand{\david}[1]{\textcolor{blue}{#1}}
%\newcommand{\corjf}[1]{\textcolor{blue}{ #1}}
%\newcommand{\corr}[1]{\com{#1}}
%\let\afaire\corjf
%\let\bcor\corjf
%% suggestion en vert
%\newcommand{\sugjf}[1]{\textcolor{green}{#1}}
%% CORRECTIONS SANS RIEN :
%%\newcommand{\correction}[1]{#1}
%%\newcommand{\corjf}[1]{#1}
%%\newcommand{\afaire}[1]{ \sf #1}
%%\newcommand{}[1]{#1}
%%\newcommand{\sugjf}[1]{#1}
%\let\rcor\correction
%\let\blcor\david
%%
%%

\usepackage{amsfonts,mathrsfs}

%\title[Complete families of foliation germs]{Topologically complete families \\ of germs of holomorphic foliations}
\title[Complete families of foliation germs]{Topological Moduli Space for Germs of Holomorphic Foliations III: Complete Families}
\date{\today}
\author{David Mar\'{\i}n, Jean-Fran\c{c}ois Mattei and Eliane Salem}
\thanks{D. Mar\'{\i}n acknowledges financial support from the Spanish Ministry of Science, Innovation and Universities, through grant  PID2021-125625NB-I00 and  by the Agency for Management of University and Research Grants of Catalonia through the grants 2017SGR1725 and 2021SGR01015.  This work is also supported by the Spanish State Research Agency, through the Severo Ochoa and Mar\'ia de Maeztu Program for Centers and Units of Excellence in R\&D (CEX2020-001084-M). E. Salem acknowledges the support of CNRS through a delegation.}
\address{Departament de Matem\`{a}tiques\\ Universitat Aut\`{o}noma de Barcelona \\ E-08193 Cerdanyola del Vall\`es (Barcelona)\\ Spain\\ \newline \indent Centre de Recerca Matem\`atica, Campus de Bellaterra, E-08193 Cerdanyola del Vall\`es, Spain} \email{{David.Marin@uab.cat}}

\address{Institut de Math\'{e}matiques de Toulouse\\ Universit\'{e} Paul Sabatier\\ 118, Route de Narbonne\\ F-31062 Toulouse Cedex 9, France} \email{jean-francois.mattei@math.univ-toulouse.fr}
\address{Sorbonne Universit\'e, Universit\'e de  Paris, CNRS,  Institut de Math\'ematiques de Jussieu - Paris Rive Gauche, F-75005 Paris, France}\email{eliane.salem@imj-prg.fr}

\subjclass[2010]{Primary 37F75; Secondary 32M25, 32S50, 32S65, 34M} 

\keywords{Complex dynamical systems, complex ordinary differential equations, holomorphic vector fields, holomorphic foliations, singularities, holonomy,  moduli space, complete families, universal deformation.}

\usepackage{tikz}

\newcommand{\T}{\mathcal{T}}

\begin{document}
\maketitle
\begin{center}
 \today
\end{center}

\begin{abstract}
In this work we use our previous results on the topological classification of generic singular foliation germs on $(\C^{2},0)$ to construct complete families: 
after fixing the semi-local topological invariants  we prove the existence of a minimal family of foliation germs that contains all the topological classes and such that any equisingular global family with parameter space an arbitrary complex manifold factorizes through it.
\end{abstract}

\tableofcontents

\section{Introduction}\label{introduction}

This paper is the outcome of a series of three works  on the topological classification of germs of singular foliations in the complex plane. In  \cite{MMS}, after fixing the topological invariants already known \cite{MM3}, we have constructed a moduli space of topological classes. Then in \cite{MMSII}, we have studied small perturbations of a generic foliation by proving the existence of a ``topologically universal'' deformation germ and by representing the ``deformation functor''. In the present paper we rely on these two results and in particular on the algebraic structure of the moduli space highlighted in \cite{MMS} to obtain a ``complete family''. Here ``complete'' means  a family that contains all the topological classes and such that any equisingular family with parameter space an arbitrary complex manifold factorizes through it via a possibly multivalued map.\\

We shall only consider germs of  foliations on $(\C^2,0)$ that are 
``\emph{generalized curves}'' \cite{CLNS}, in the sense that on the \emph{exceptional divisor} $\mc E_\F:=E_\F^{-1}(0)$  of the reduction of singularities map $E_\F: M_\F\to \C^2$, there are no singularities of the  \emph{reduced foliation} $\F^\sharp:=E_{\F}^{-1}(\F)$ of  \emph{saddle-node} type\footnote{i.e. locally defined by a vector field germ whose linear part has exactly one non-zero eigenvalue.}, cf.~\cite{CCD,Frank}. 
The exceptional divisor $\mc E_\F$ may have \emph{dicritical} (i.e. non-invariant by $\F^\sharp$) irreducible components and $\F^\sharp$ may possess \emph{nodal}\footnote{i.e. locally defined by a vector field germ such that the ratio of the eigenvalues of its linear part is a strictly positive real number.}  singularities on $\mc E_\F$.

\medskip

We will say that two germs of foliations $\F$ and $\G$ have \emph{same SL-type} if there exists a homeomorphism $\varphi:\mc E_\F\iso\mc E_\G$ satisfying:\\[-1mm]
\begin{enumerate}[(SL1)]
\item\it $\varphi(\mr{Sing}(\F^\sharp))= \mr{Sing}(\G^\sharp)$ and  $\varphi(D)\cdot \varphi(D') = D\cdot D'$ for any irreducible components $D,D'$ of $\mc E_\F$,\rm\\[-2mm]
\item\it if $D \subset \mc E_\F$ is  a $\F^\sharp$-invariant component and $p\in D$ is a singular point of $\F^\sharp$, then the Camacho-Sad indices of $\F^\sharp$ at $p$ along $D$ and of $\G^\sharp$ at $\varphi(p)$ along $\varphi(D)$ are equal,\rm\\[-2mm]
\item\it if $\mc H^{\F}_D:\pi_1(D\setminus \mr{Sing}(\F^\sharp),\cdot)\to \mr{Diff}(\C,0)$ denotes the holonomy morphism of $\F^\sharp$ along an invariant component $D\subset \mc E_\F$ and $\varphi_\ast$ denotes the morphism induced by $\varphi$ at the fundamental groups level, then up to composition by an inner automorphism of $\mr{Diff}(\C,0)$,  $\mc H^{\F}_D\circ \varphi_\ast^{-1}$ is the  holonomy morphism of $\G^\sharp$ along $\varphi(D)$.
\end{enumerate}\medskip
When all germs of a family of foliations
have same SL-type, the family will be called \emph{equisingular}. This notion, which presupposes local equireduction of the considered family, is specifically defined in Section \ref{sec-equi-global}. For an equisingular family $\F_P$ with parameter space a complex manifold $P$, we  will denote by 
\[\F_P(t_0):=\F_{P}|_{t=t_0}\] the \emph{fiber} over a particular value $t_0\in P$ and  by 
$\F_{P,t_0}$ the germ of this family along the fiber  $\F_P(t_0)$.

\medskip

Through all the paper a \emph{\Ctop-conjugacy} between two foliations is a homeomorphism sending leaves into leaves and {\it preserving the orientation of the ambient spaces as well as the orientation of the leaves.}\\

The notion of \emph{tame foliation} will be defined in section~\ref{tame}. If we exclude some exceptional configurations of $\mc E_\F$, the set of differential forms defining tame foliations contains a Krull open dense set. 
Our main result asserts the existence in this context of an equisingular global family which is ``topologically complete''. We also describe the (minimal) redundancy of their topological classes.

\begin{thmm}\label{main}
Let $\F$ be a tame foliation germ. Then
there exist $\tau\in \mb N$, a quotient~$\mbf D$ of a finite product of totally disconnected subgroups of $\mb U(1):=\{|z|=1\}\subset \C$ and an equisingular global family $\un\F_{\mbf U}$ over $\mbf U=\C^\tau\times\mbf D$ such that
\begin{enumerate}
\item\label{modsurj} for any foliation $\G$ with same SL-type as $\F$, there exists $u_0\in\mbf U$ such that $\G$ is $\mc C^0$-conjugated to the fiber $\F_{\mbf U}(u_0)$,
\item\label{globalfactnonmark}  if $P$ is a connected and simply connected complex manifold, $t_0\in P$ and  $\un\G_{P}$ is an equisingular global family  whose fiber $\G_P(t_0)$ is $\mc C^0$-conjugated to a fiber $\F_{\mbf U}(u_0)$, then there exists a holomorphic map $\lambda:P\to {\mbf U}$ such that  $\lambda(t_0)=u_0$, and  for any $t\in P$ the germs of families $\un\G_{P,t}$ and $\lambda^\ast\un\F_{\mbf U,\lambda(t)}$ over the germ of manifold $(P, t)$  are $\mc C^0$-conjugated,
\item\label{redondance} there exist $p\in\N$, a holomorphic action $*$ of $\Z^p$ on $\mbf U$ and an action $\star$ of a subgroup $\mbf I$ of the mapping class group of $(\mc  E_\F,\mr{Sing}(\F),\cdot)$   on the quotient $\mbf U/\Z^p$, such that two fibers $\F_{\mbf U}(u_1)$ and $\F_{\mbf U}(u_2)$ are \Ctop-conjugated if and only if there exists $\dot\varphi\in\mbf I$ such that $\dot\varphi\star(\Z^p*u_1)=\Z^p*u_2$. In particular, for any $u_0\in\mbf U$ the set of $u\in\mbf U$ such that $\F_{\mbf U}(u)$ is $\mc C^0$-conjugated to $\F_{\mbf U}(u_0)$ is at most countable.
\end{enumerate}
\end{thmm}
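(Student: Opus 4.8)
The proof of Theorem~\ref{main} should be an assembly of the three pillars of this series. The starting point is the moduli space constructed in \cite{MMS}: fixing the SL-type of $\F$, the set of $\mc C^0$-conjugacy classes of foliations with that SL-type is parametrized by a quotient of an explicit affine-algebraic object, and the key point from \cite{MMS} is that this object carries a group structure (a finite product of totally disconnected subgroups of $\mb U(1)$, crossed with a vector group $\C^\tau$ coming from the "continuous" holonomy invariants). So the first step is to extract from \cite{MMS} the space $\mbf U=\C^\tau\times\mbf D$ together with a tautological family: over each point $u\in\mbf U$ one must produce an honest foliation germ realizing the corresponding conjugacy class, and the \textbf{tameness} hypothesis is exactly what makes this realization possible globally over $\mbf U$ (this is where "if we exclude some exceptional configurations\ldots the set of defining forms contains a Krull open dense set" is used). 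One then checks that the resulting family $\un\F_{\mbf U}$ is equisingular in the sense of Section~\ref{sec-equi-global}; surjectivity onto topological classes, assertion~\eqref{modsurj}, is then immediate from the defining property of the moduli space.

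\textbf{The factorization property.} For assertion~\eqref{globalfactnonmark}, the input is the topologically universal deformation of \cite{MMSII}: every equisingular germ of family $\un\G_{P,t}$ is induced, up to $\mc C^0$-conjugacy, by a holomorphic map into the base of the universal deformation, uniquely up to the relevant equivalence. The plan is to fix the base point $t_0$ with $\G_P(t_0)\sim_{\mc C^0}\F_{\mbf U}(u_0)$, use the universal property at $t_0$ to get a germ of holomorphic map $\lambda_{t_0}\colon(P,t_0)\to\mbf U$ with $\lambda_{t_0}(t_0)=u_0$ realizing the $\mc C^0$-conjugacy of the germs of families, and then \emph{propagate} this germ over all of $P$. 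Propagation is where connectedness and simple connectedness of $P$ enter: along any path one analytically continues $\lambda_{t_0}$ using the universal property at each intermediate fiber (two germ-maps inducing $\mc C^0$-conjugate families of germs differ by the ambiguity in the moduli space), and simple connectedness guarantees the continuation is single-valued, producing a global holomorphic $\lambda\colon P\to\mbf U$. The defining property of $\un\F_{\mbf U}$ as a tautological family then gives, for each $t$, the $\mc C^0$-conjugacy $\un\G_{P,t}\sim\lambda^\ast\un\F_{\mbf U,\lambda(t)}$.

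\textbf{The redundancy.} For assertion~\eqref{redondance} one must identify exactly when two points of $\mbf U$ give $\mc C^0$-conjugate foliations. This is a bookkeeping of the ambiguities already present in \cite{MMS}: the moduli space is a quotient of the group $\mbf U$ by (a) the choices in trivializing/normalizing the holonomy data, which produce a lattice $\Z^p$ acting holomorphically on $\mbf U$ (shifts coming from the $2\pi i\Z$-indeterminacy of logarithms of linear parts / Camacho–Sad normalizations), and (b) the choices in the marking of the exceptional divisor and its dual graph, i.e.\ the automorphisms of the "combinatorial" data, which after quotienting by $\Z^p$ act as a finite product $\mbf I$ of braid groups $\star$ permuting the singular points on each dicritical component. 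One then checks, using the injectivity half of the classification theorem from \cite{MMS}, that $\F_{\mbf U}(u_1)\sim_{\mc C^0}\F_{\mbf U}(u_2)$ iff $u_1,u_2$ lie in the same orbit of the combined action, which is the stated equivalence $g\star(\Z^p*u_1)=\Z^p*u_2$.

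\textbf{Main obstacle.} The routine content is substantial but the genuine difficulty is the \emph{global} nature of step~\eqref{globalfactnonmark}: the universal deformation of \cite{MMSII} is a statement about germs of families along a single fiber, and patching the resulting germ-maps $\lambda_t$ into a single holomorphic map on all of $P$ requires a careful monodromy argument — one must show the transition between overlapping germ-charts is given by an element of the discrete ambiguity group (so the continuation is locally constant in the ambiguity, hence trivial once $P$ is simply connected) and that the resulting $\lambda$ is holomorphic, not merely continuous. Controlling this monodromy, and in particular ruling out a nontrivial contribution from the braid-group part of the redundancy (which is why one passes to a multivalued map in general and needs simple connectedness to make it single-valued), is the technical heart of the proof.
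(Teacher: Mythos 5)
Your overall assembly (moduli space of \cite{MMS} $+$ universal deformation of \cite{MMSII} $+$ bookkeeping of the $\Z^p$ and braid ambiguities) is the right skeleton, but two essential points are misplaced or missing. First, the role of tameness. The family $\un\F_{\mbf U}$, the surjectivity of its moduli map, and the universal/factorization properties are \emph{not} where tameness enters: all of that is the marked, finite-type theory (Theorem~\ref{C}, built on Theorems~\ref{localuniv} and~\ref{inivfaible}), valid in the \Cex{} category. The hypotheses (NC) and (TR) defining tameness are used exactly once, through Theorem~\ref{C0 = Cex}: for such foliations \Ctop-conjugacy implies \Cex-conjugacy. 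This is the indispensable bridge in every assertion of Theorem~\ref{main}: in (\ref{globalfactnonmark}) it converts the hypothesis ``$\G_P(t_0)$ is \Ctop-conjugated to $\F_{\mbf U}(u_0)$'' into an excellent conjugacy $\phi$, whose lift $\phi^\sharp$ composed with $f_{u_0}$ furnishes a marking of $\G_P(t_0)$; in (\ref{redondance}) it lets one replace \Cex{} by \Ctop{} in the redundancy statement of Theorem~\ref{310}. Your plan never performs this conversion, yet all the machinery you invoke (moduli group structure, \Cex-universality) lives in the marked \Cex{} world, so as written it cannot be applied to the purely topological hypotheses of Theorem~\ref{main}.

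Second, the step you yourself flag as the ``technical heart'' of (\ref{globalfactnonmark}) is left unresolved, and the paper's mechanism is different from the path-continuation/monodromy argument you sketch. The braid-group ambiguity is not controlled by a monodromy estimate at all: it is eliminated at the outset by working with \emph{marked} families — simple connectedness of $P$ is used (Remark~\ref{prglobalmark}, the covering-space property of equivalence classes of markings) to extend the marking $g_{t_0}=\phi^\sharp\circ f_{u_0}$ of the single fiber to a coherent marking $(g_t)_{t\in P}$ of $\un\G_P$, after which the mapping-class indeterminacy is gone by definition. The remaining, discrete ambiguity is the lattice $\alpha(\Z^p)=\ker\Lambda$, and the gluing of the local factorizations provided by the \Cex-universality (Theorem~\ref{factolocale}) is handled cohomologically in Theorem~\ref{inivfaible}: the transition constants $N_{ij}\in\Z^p$ form a \v{C}ech $1$-cocycle, which is a coboundary because $H_1(P,\Z)=0$ forces $H^1(P,\Z^p)=0$; uniqueness comes from Lemma~\ref{liftmodP} (a countable-union-of-analytic-sets argument), not from local constancy along paths. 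So the missing ideas are precisely: (a) the reduction \Ctop{} $\Rightarrow$ \Cex{} via tameness, and (b) the marking-extension plus \v{C}ech $H^1(P,\Z^p)=0$ argument that replaces your unproven monodromy step. (A minor further inaccuracy: $\mbf I$ is the mapping class group of the whole marked divisor $\mc E^\diam_\F$, a finite product of braid groups attached to all components with their distinguished points, not just a permutation action on dicritical components.)
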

The  \emph{mapping class group  of $(\mc  E_\F,\mr{Sing}(\F),\cdot)$} is the set of isotopy classes
$\dot\varphi$ of  homeomorphisms $\varphi:\mc E_\F\to\mc E_\F$  preserving the orientation, the singular set, $\varphi(\mr{Sing}(\F))=\mr{Sing}(\F)$, and the intersection product, $\varphi(D)\cdot\varphi(D')=D\cdot D'$ for any irreducible components $D,D'$ of $\mc E_\F$. This group is always countable. \\

 Tame foliations have the remarkable property that any two  topologically conjugated tame foliations are also conjugated by an \emph{excellent homeomorphism}, i.e. one that lifts through the reduction of singularities as a homeomorphism which is holomorphic at the non-nodal singular points, cf. Theorem~\ref{C0 = Cex}. This result extends to equisingular families of tame foliations:
 
\begin{thmm}\label{conjug}
Let $\un\F_Q$ and $\un\G_Q$ be two  equisingular global families of germs of foliations  over a complex manifold $Q$, whose fibers are tame.
Then the following properties are equivalent:
\begin{enumerate}
\item\label{weakconj}  for any $u\in Q$ the fibers  $\F_{Q}(u)$ and $\G_Q(u)$ are \Ctop-conjugated,
\item\label{exconj} \it for any $u\in Q$ the fibers  $\F_{Q}(u)$ and $\G_Q(u)$  are \Cex-conjugated,
\item\label{strongconj} the global families $\un\F_Q$ and $\un\G_Q$  are  locally \Cex-conjugated.
\end{enumerate} 
\end{thmm}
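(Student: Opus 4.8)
The plan is to establish the equivalence $(\ref{exconj})\Leftrightarrow(\ref{weakconj})$, the trivial implication $(\ref{strongconj})\Rightarrow(\ref{weakconj})$, and the substantial implication $(\ref{weakconj})\Rightarrow(\ref{strongconj})$; these three facts together yield all the equivalences of the statement. Now $(\ref{exconj})\Rightarrow(\ref{weakconj})$ and $(\ref{strongconj})\Rightarrow(\ref{weakconj})$ are immediate, since an excellent homeomorphism is in particular a homeomorphism and one may restrict a local $\mc C^{\mr{ex}}$-conjugation of germs of families to the fiber over each parameter; and $(\ref{weakconj})\Rightarrow(\ref{exconj})$ is nothing but Theorem~\ref{C0 = Cex} applied fiber by fiber, which is legitimate because every fiber of $\un\F_Q$ and $\un\G_Q$ is tame by hypothesis. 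The entire content therefore lies in $(\ref{weakconj})\Rightarrow(\ref{strongconj})$, and it is here that the complete family of Theorem~\ref{main} comes into play.

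Since $(\ref{strongconj})$ concerns germs of families along fibers, I fix $u_0\in Q$ and aim to produce, on a connected and simply connected open neighbourhood $W\subset Q$ of $u_0$, an excellent conjugation between the germs of $\un\F_Q$ and $\un\G_Q$ along the fiber over $u_0$. Applying Theorem~\ref{main} to the tame foliation $\F:=\F_Q(u_0)$ produces $\tau$, $\mbf D$, $\mbf U=\C^\tau\times\mbf D$ and a complete equisingular family $\un\F_{\mbf U}$. By Theorem~\ref{main}(\ref{modsurj}), $\F=\F_Q(u_0)$ is $\mc C^0$-conjugate to some fiber $\F_{\mbf U}(a_0)$; and $\G_Q(u_0)$, being $\mc C^0$-conjugate to $\F$ by $(\ref{weakconj})$, has the same SL-type and so is $\mc C^0$-conjugate to some fiber $\F_{\mbf U}(b_0)$, again by Theorem~\ref{main}(\ref{modsurj}). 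Applying Theorem~\ref{main}(\ref{globalfactnonmark}) over $W$ to $\un\F_Q|_W$ and to $\un\G_Q|_W$ yields holomorphic maps $\lambda,\mu\colon W\to\mbf U$ with $\lambda(u_0)=a_0$, $\mu(u_0)=b_0$ such that, for every $u\in W$, the germs of families $\un\F_{Q,u}$ and $\lambda^\ast\un\F_{\mbf U,\lambda(u)}$ are $\mc C^0$-conjugate, and likewise $\un\G_{Q,u}$ and $\mu^\ast\un\F_{\mbf U,\mu(u)}$; by Theorem~\ref{C0 = Cex} (the fibers in play being tame) these conjugations may be taken excellent.

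It remains to compare $\lambda$ and $\mu$. By $(\ref{weakconj})$, $\F_Q(u)$ and $\G_Q(u)$ are $\mc C^0$-conjugate for every $u\in W$, hence so are $\F_{\mbf U}(\lambda(u))$ and $\F_{\mbf U}(\mu(u))$; Theorem~\ref{main}(\ref{redondance}) thus furnishes, for each $u$, an element $g_u\in\mbf I$ with $g_u\star(\Z^p*\lambda(u))=\Z^p*\mu(u)$ in $(\mbf U/*\Z^p)$. Since $\mbf I$ is discrete, $\lambda$ and $\mu$ are continuous and $W$ is connected, after shrinking $W$ one may take $g_u$ equal to a fixed $g\in\mbf I$; hence $\mu$ is $\lambda$ followed by the automorphism of $(\mbf U/*\Z^p)$ induced by $g$. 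By the construction carried out in the proof of Theorem~\ref{main}(\ref{redondance}), this automorphism and the $*$-action of $\Z^p$ are realised by excellent self-conjugations of $\un\F_{\mbf U}$ depending holomorphically on the parameter. Composing along the fiber over $u_0$ the three excellent conjugations
\[
\un\F_Q|_W\ \simeq\ \lambda^\ast\un\F_{\mbf U}\ \simeq\ \mu^\ast\un\F_{\mbf U}\ \simeq\ \un\G_Q|_W
\]
produces the required local $\mc C^{\mr{ex}}$-conjugation, proving $(\ref{weakconj})\Rightarrow(\ref{strongconj})$.

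The main obstacle lies entirely in this last paragraph: turning the pointwise relation $g_u\star(\Z^p*\lambda(u))=\Z^p*\mu(u)$ into a \emph{single} excellent conjugation of families. Beyond the (routine) local constancy of $g_u$, one needs that the identifications furnished by Theorem~\ref{main}(\ref{modsurj})--(\ref{redondance}) — which are a priori only topological and pointwise in the parameter — can be assembled into conjugations that are simultaneously excellent and holomorphic in the parameter; equivalently, that the actions $*$ of $\Z^p$ and $\star$ of $\mbf I$ lift to the total space of $\un\F_{\mbf U}$ through excellent fibered automorphisms compatible with the factorisations of Theorem~\ref{main}(\ref{globalfactnonmark}). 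Securing this coherent lift — along with the bookkeeping of the $\Z^p$-deck transformations needed to go between $\mbf U$ and $(\mbf U/*\Z^p)$ — is the part that requires genuine care, and it rests on the rigidity of $\mc C^{\mr{ex}}$-conjugations of tame foliations embodied in Theorem~\ref{C0 = Cex}; the remainder is formal.
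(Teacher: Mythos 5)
There is a genuine gap, and it sits exactly where you place the ``genuine care'': your argument for $(\ref{weakconj})\Rightarrow(\ref{strongconj})$ reduces the theorem to assertions that are not available from the statements you invoke and that are, in substance, the difficulty the theorem is meant to overcome. First, a preliminary circularity: Theorem~\ref{C0 = Cex} is a statement about two individual foliation germs, so it does let you pass from (\ref{weakconj}) to (\ref{exconj}) fiber by fiber, but it does \emph{not} let you upgrade the $\mc C^0$-conjugacies of \emph{germs of families} furnished by Theorem~\ref{main}(\ref{globalfactnonmark}) to \Cex-conjugacies of families --- that upgrade (a conjugacy which is excellent and coherent in the parameter) is precisely part of what Theorem~\ref{conjug} asserts. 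Second, and more seriously, the passage from the pointwise relation $g_u\star(\Z^p*\lambda(u))=\Z^p*\mu(u)$ to a locally constant $g$ is unjustified: the assignment $u\mapsto g_u$ comes from choosing, for each $u$ separately, an arbitrary topological conjugacy between the fibers, with no continuity in $u$; the set of $u$ for which a fixed $g\in\mbf I$ works is neither open nor obviously analytic, since the $\star$-action is defined only set-theoretically on the quotient $\mbf U/ \!*\Z^p$ (by re-marking), not by a holomorphic action on $\mbf U$, so ``$\mbf I$ discrete, $\lambda,\mu$ continuous, $W$ connected'' yields nothing. Finally, Theorem~\ref{main}(\ref{redondance}) gives no family-level realisation of the $\star$-action (nor, from its statement alone, of the $*$-action) by excellent self-conjugacies of $\un\F_{\mbf U}$ holomorphic in the parameter; you acknowledge that this coherent lift is unproved, and it does not follow from the rigidity in Theorem~\ref{C0 = Cex}.

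For comparison, the paper's proof avoids any such lift. It works with markings: it fixes a coherent marking $(f_u)$ of $\un\F_Q$, picks arbitrary fiberwise \Cex-conjugacies $\varphi_u$, and compares the incoherent pre-marking $(\varphi_u^\sharp\circ f_u)$ of $\un\G_Q$ with a coherent one $(g_u)$; since the mapping class group $\mr{MCG}(\mc E^\diam)$ is \emph{countable}, some class $[\Phi]$ occurs on a set $K_{[\Phi]}$ that cannot be covered by countably many proper closed analytic subsets of $Q$ (Baire). Re-marking $\un\G_Q$ by $G_u=g_u\circ\Phi$, it then uses the holomorphic factorizations of Theorem~\ref{inivfaible} and their uniqueness (Lemma~\ref{liftmodP}, Theorem~\ref{310}(\ref{unicityfactmap})) to get two maps $\lambda,\lambda'$ whose difference lies in $\alpha(\Z^p)$ on $K_{[\Phi]}$; the sets $\{\lambda-\lambda'=\alpha(N)\}$ are closed analytic, so one of them is all of $Q$, forcing equality of the marked moduli maps everywhere, and the marked statement (Theorem~\ref{teo91}) then gives the local \Cex-conjugacy. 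If you want to salvage your outline, you would need to replace the ``coherent lift of the $*$ and $\star$ actions'' by an argument of this countability-plus-analytic-continuation type; as written, the key implication is not proved.
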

\noindent A \emph{\Cex-conjugacy} of families is a \Ctop-conjugacy of families that lifts through the local equireduction maps
as a homeomorphism which is holomorphic at the non-nodal singularities, cf. \S\ref{sec-equi-global}.\\

Theorems \ref{main} and \ref{conjug} will follow from analogous results (Theorems~\ref{C} and~\ref{teo91}) in the context of marked foliations for which we can use the  moduli space of \Cex-conjugacy classes of marked foliations 
constructed in \cite{MMS}.
A \emph{marking} of $\F$ by a marked divisor $(\mc E,\Sigma,\cdot)$ is a  homeomorphism $f:\mc E\to\mc E_\F$ such that $f(\Sigma)=\mr{Sing}(\F^\sharp)$ and $f(D)\cdot f(D')=D\cdot D'$, cf. \S\ref{marquages}.
When $\F$ and $\G$ are endowed with markings $f:\mc E\to\mc E_\F$ and $g:\mc E\to\mc E_\G$ by a common marked divisor $(\mc E, \Sigma, \cdot)$,  we will say that the marked foliations $(\F,f)$ and $(\G,g)$ have \emph{same marked SL-type} if conditions (SL1)-(SL3) are fulfilled and if moreover $g^{-1}\circ \varphi\circ f$ is isotopic to the identity of $\mc E$ relatively to $\Sigma$. The following analogue of Theorem~\ref{main} in the marked setting holds for the larger class of \emph{finite type foliations} introduced in \cite[\S6]{MMS} and  specified in \cite[\S5]{MMSII}; in this context we have uniqueness of the factorization $\lambda:P\to\mbf U$ of a marked family under a weaker topological condition on its parameter space $P$.

\begin{thmm}\label{C}
Let $\F^\diam=(\F,f)$ be a marked finite type foliation which is a generalized curve.  Then there exists a marked equisingular global family of foliations $\un\F_{\mbf U}^\diam$ over $\mbf U=\C^\tau\times\mbf D$ such that
\begin{enumerate}
\setcounter{enumi}{-1}
\item\label{C0}  $\mbf D$ is a quotient of a finite product of totally disconnected subgroups of $\mb U(1)$ and $\tau$ is the dimension of the cohomological space $H^1(\A_\F,\T_\F)$ (of a complex that we recall in (\ref{suiteH1}) of \S\ref{Suniv}) whose finiteness characterizes the finite type of $\F$, cf. \cite[Theorem~5.15]{MMSII},
\item\label{C1}
if $\G^\diam=(\G,g)$ is a marked foliation with same marked SL-type as $\F^\diam$, there exists $u_0\in\mbf U$ such that $\G^\diam$ is \Cex-conjugated to $\F_{\mbf U}^\diam(u_0)$,
\item\label{C2}
 if $P$ is a connected manifold satisfying  $H_1(P,\mb Z)=0$, $t_0\in P$ and  $\un\G_{P}^\diam$ is a  marked equisingular  global family  whose fiber $\G^\diam_P(t_0)$ is \Cex-conjugated to a fiber $\F_{\mbf U}^\diam(u_0)$  as marked foliations, then there exists a unique holomorphic map $\lambda:P\to {\mbf U}$  such that $\lambda(t_0)=u_0$ and  for any $t\in P$ the germs of marked   families $\un\G_{P,t}^\diam$ and $\lambda^\ast\un\F^{\diam}_{\mbf U,\lambda(t)}$, over the germ of manifold $(P, t)$,  are \Cex-conjugated.
\end{enumerate}
\end{thmm}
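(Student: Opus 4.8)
The plan is to build the marked complete family $\un\F^\diam_{\mbf U}$ directly from the moduli space of \Cex-conjugacy classes of marked finite type foliations constructed in \cite{MMS}, exploiting its algebraic structure, and then to transport the universal deformation of \cite{MMSII} along it. First I would recall from \cite{MMS} that, after fixing the marked SL-type of $\F^\diam$, the \Cex-conjugacy classes of marked foliations with that SL-type are parametrized by an explicit quotient space; the key structural input is that this moduli space is (up to the identifications coming from $\Z^p$ and the braid groups, which are irrelevant at the \emph{marked} level) an affine space $\C^\tau\times\mbf D$, where $\tau=\dim_{\C}H^1(\A_\F,\T_\F)$ by the finite type characterization \cite[Theorem~5.15]{MMSII} and $\mbf D$ records the ``discrete'' holonomy data, i.e. a quotient of a finite product of totally disconnected subgroups of $\mb U(1)$. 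This gives item~(0) essentially by construction. To get an actual \emph{global family} of foliations over $\mbf U$ — not just a family of conjugacy classes — I would glue together, over a suitable cover of $\mbf U$, the representatives produced by the topologically universal deformations of \cite{MMSII}: over a neighborhood of each point $u$ the universal deformation of the fiber $\F^\diam_{\mbf U}(u)$ provides a canonical germ of equisingular marked family, and the universality gives the cocycle needed to glue these germs into a single global object. Equisingularity of the result is inherited from the local pieces together with the definition in Section~\ref{sec-equi-global}.

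For item~(1), given a marked foliation $\G^\diam$ with the same marked SL-type as $\F^\diam$, its \Cex-conjugacy class is by construction a point $u_0\in\mbf U$ of the moduli space, and since the fiber $\F^\diam_{\mbf U}(u_0)$ was chosen to represent that very class, $\G^\diam$ is \Cex-conjugated to $\F^\diam_{\mbf U}(u_0)$; the only thing to check is that the markings match, which is exactly the extra clause ``$g^{-1}\circ\varphi\circ f$ isotopic to $\id_{\mc E}$ rel $\Sigma$'' in the definition of same marked SL-type, so this reduces to a bookkeeping argument about isotopy classes of the marking homeomorphisms.

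Item~(2) is where the real work lies. Given a marked equisingular global family $\un\G^\diam_P$ over a connected $P$ with $H_1(P,\Z)=0$, and a fiber $\G^\diam_P(t_0)$ \Cex-conjugated to $\F^\diam_{\mbf U}(u_0)$, I would for each $t\in P$ assign to the \Cex-conjugacy class of $\G^\diam_P(t)$ its corresponding point $\lambda(t)\in\mbf U$; the content is to show $\lambda$ is holomorphic and is the \emph{unique} such map with $\lambda(t_0)=u_0$ realizing the local \Cex-conjugacies $\un\G^\diam_{P,t}\simeq\lambda^\ast\un\F^\diam_{\mbf U,\lambda(t)}$. Holomorphy is local on $P$: near any $t$, the universal property of the topologically universal deformation of the fiber (from \cite{MMSII}) yields a holomorphic classifying map of the germ $\un\G^\diam_{P,t}$ into the base of that universal deformation, and since the base of the universal deformation maps holomorphically to $\mbf U$ (this is how $\un\F^\diam_{\mbf U}$ was built in the first part), composing gives a holomorphic local expression for $\lambda$; these local maps agree on overlaps because they all induce the same map on \Cex-conjugacy classes, and — crucially — uniqueness of the classifying map for the universal deformation forces them to coincide. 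For uniqueness of $\lambda$ globally, suppose $\lambda,\lambda'$ both work; they agree at $t_0$, both are holomorphic, and locally each is \emph{the} unique classifying map into a universal deformation, so $\{\,\lambda=\lambda'\,\}$ is open; it is also closed, hence all of the connected $P$. The hypothesis $H_1(P,\Z)=0$ (weaker than simple connectedness, in contrast to Theorem~\ref{main}\eqref{globalfactnonmark}) is what kills the monodromy ambiguity coming from the braid-group identifications in item~\eqref{redondance}: at the marked level the moduli point is well defined, but the choice of a continuously-varying representative family along loops in $P$ a priori only lifts the class-valued map, and vanishing of $H_1$ guarantees this lift exists and is single-valued. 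I expect this monodromy/lifting step — precisely, showing the class-valued map $P\to\mbf U$ lifts to a genuine holomorphic map under $H_1(P,\Z)=0$, and that the glued family $\un\F^\diam_{\mbf U}$ is well defined and equisingular — to be the main obstacle; the rest is an orchestration of the representability result of \cite{MMSII} and the moduli-space algebra of \cite{MMS}.
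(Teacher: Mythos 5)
Your high-level plan (combine the moduli-space structure of \cite{MMS} with the universal deformations of \cite{MMSII}) is the paper's philosophy, but two steps of your argument have genuine gaps. First, the construction of the global family: you propose to glue, over a cover of $\mbf U$, the germs of universal deformations of representative fibers, saying ``universality gives the cocycle needed to glue''. It does not. Universality gives uniqueness of the \emph{classifying map}, not a canonical choice of the conjugating homeomorphism between two overlapping germs, so the descent data on double and triple overlaps is not furnished (automorphisms of the fibers obstruct exactly this); moreover, identifying the base germ of each universal deformation with an open piece of $\C^\tau\times\{d\}$ already presupposes that the moduli map of the sought family is $(z,d)\mapsto\Lambda(z)\cdot\zeta(d)$, i.e. the very relation one is trying to build in. The paper avoids this circularity: the family $\un\F^{\diam}_{\mbf U}$ is the \emph{explicit} one of \cite[\S10, Step (vii)]{MMS}, glued by the maps $\Phi_{D,\ge}(m,z)=(\varphi_{D,\ge}\circ\exp(X_\ge)[z_{\kappa(\ge)}](m),z)$, and the new content (Theorem~\ref{localuniv}) is a computation of its Kodaira--Spencer map at every point, which is shown to be an isomorphism via Theorem~\ref{geometric-basis}, so that the criterion of \cite[Theorem~6.7]{MMSII} applies.

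Second, in item (2) you misidentify the redundancy and therefore the role of $H_1(P,\Z)=0$. At the \emph{marked} level the braid groups are irrelevant, but the $\Z^p$-identifications are not: $\alpha(\Z^p)=\ker\Lambda$, so two local classifying maps $\lambda_i,\lambda_j$ of $\un\G^\diam_P$ need \emph{not} agree on $V_i\cap V_j$ (contrary to your claim that uniqueness of the classifying map forces them to coincide); they differ by constants $\alpha(N_{ij})$, and gluing requires this $\Z^p$-valued \v{C}ech cocycle to be a coboundary, which is where $H^1(P,\Z^p)=0$, hence $H_1(P,\Z)=0$, enters (Theorem~\ref{inivfaible}). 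One must then also prove that replacing a local classifying map $\lambda_i$ by $\alpha(N_i)+\lambda_i$ still yields a locally \Cex-conjugated pullback family; this is a separate lemma in the paper, proved by comparing the universal deformations of $\un\F^\diam_{\mbf U}$ at the two base points $(z_0,d)$ and $(\alpha(N)+z_0,d)$, and your sketch omits it. Finally, your open--closed uniqueness argument is shaky on two counts: the hypothesis is conjugacy of marked \emph{families}, not of deformations, so the uniqueness clause of \Cex-universality does not apply directly, and $\alpha(\Z^p)$ need not be discrete in $\C^\tau$, so continuity alone does not give openness of $\{\lambda=\lambda'\}$. The paper's Lemma~\ref{liftmodP} handles this by passing to equality of moduli maps and using that a connected manifold cannot be a countable union of proper closed analytic subsets.
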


\noindent An analogue of Theorem~\ref{conjug} in the marked setting will be given in Theorem~\ref{teo91}.\\

We will also compare the conjugation notions of local families and deformations.
A \emph{deformation of $\F$} is the data of a family $\un\F_{P,t_0}$ over a germ of holomorphic manifold $P$ at a point $t_0$ and a biholomorphism that identifies $\F$ to the fiber $\F_{P}(t_0)$. A \emph{conjugacy of deformations} of $\F$ is a conjugacy of the associated families compatible with the corresponding biholomorphisms, cf. \S\ref{sec-equi-global}. In Theorem~\ref{equideffam} we show that this compatibility condition is automatically fulfilled in the context of marked germs of families.\\

The central point of the paper is Theorem~\ref{localuniv}. It states the \Cex-universality of the germ at any point of the parameter space of the global family $\un\F_{\mbf U}$ constructed in \cite{MMS} that contains all the topological types in a fixed SL-class. This property will be proven  by explicitly computing the Kodaira-Spencer map of this family at each point, that provides an infinitesimal characterization of \Cex-universality.\\

In Chapter~\ref{Sfact} we look at the problem of existence of factorizations of global families through $\un\F_{\mbf U}$. Since Theorem~\ref{localuniv} gives local factorizations, obtaining a global factorization is reduced to a gluing problem. The group structure of the moduli space obtained in \cite{MMS} allows to translate this one into a cohomological problem that can be solved under weak topological assumptions on the parameter space of the global family.\\

All the study in Chapters~\ref{SLU} and~\ref{Sfact}, leading to Theorem~\ref{C},  is made for marked families modulo \Cex-conjugacy and only under the  finite type assumption. But Theorems~\ref{main} and~\ref{conjug} concern non-marked global families and \Ctop-conjugacies.
To work with \Ctop-conjugacies we require additional (Krull generic) hypothesis defining tame foliations in \S\ref{tame}, which allow to prove Theorem~\ref{main}. The proof of Theorem~\ref{conjug} in Section~\ref{weak-strong} is based again on the group structure of the moduli space using the fact that the mapping class group of the exceptional divisor is countable.

\section{Locally universal family}\label{SLU}

\subsection{Equisingular global families and deformations}\label{sec-equi-global}
We call \emph{(global) family of (germs of) foliations   over  a complex manifold~$Q$}, not necessarily connected, the data 
\begin{equation*}
\un{\F}_Q:=(M, \pi, \theta, \F_{Q})
\end{equation*}
of a complex manifold $M$  with $\dim(M)=\dim(Q)+2$,  a holomorphic surjective submersion  $\pi:M\to Q$,  a holomorphic section $\theta : Q\to M$ of $\pi$, and
 a germ along  $\theta(Q)$ of a one dimensional  holomorphic foliation $\F_{Q}$ on $M$ whose leaves are contained in the fibers of~$\pi$.  We say that $(M,\pi)$ is a \emph{manifold over $Q$}. For each $u\in Q$ we consider, in  the fiber of $\pi$ over $u$,  the germ of foliation at $\theta(u)$ obtained by restricting~$\F_Q$:
\begin{equation*}\label{fiberfamfol}
M(u):=\pi^{-1}(u)\,,\quad
\F_Q(u)
:=\F_{Q}|_{(M(u),\theta(u))}
\,.
\end{equation*}
The family   is \emph{equireducible} if $\theta(Q)$ is the singular locus of $\F_Q$ and for any point $u_0\in Q$ there is  an open \emph{trivializing neighborhood} $W\ni u_0$ and  a map called \emph{(minimal) equireduction map over $W$}
\begin{equation*}\label{equiredlocal}
E_{\F_{W}} :M_{\F_{W}}\to M_{W}:=\pi^{-1}(W)
\end{equation*}
that is defined by a sequence of blow-ups with etale  centers over $W$, and whose 
restriction  to each fiber 
\[ 
M_{\F_W}(u):=\pi^{\sharp\,-1}(u)\,,\quad \pi^\sharp:=\pi\circ E_{\F_{W}}\,,\quad u\in W\,,
 \]
is exactly the minimal reduction map of $\F_{Q}(u)$, and moreover the singular locus of the reduced foliation $\F_W^\sharp$ in $M_{\F_W}$ is also etale over $W$. A more detailed definition of this notion is given in \cite[\S2.2]{MMSII} or in \cite[Chapter 10, step (vi)]{MMS}.  Up to shrinking the neighborhood $W$ of $u_0$,  the exceptional divisor   $\mc E_{\F_{W}}=E_{\F_{W}}^{-1}(\theta(W))$ and the singular locus of the reduced foliation $\F_{W}^\sharp$ in $M_{\F_{W}}$ are topologically trivial: there exists a \emph{trivializing homeomorphism over $W$}
\begin{equation}\label{trivdivex}
\Psi_{W}:\mc E_{\F_{W}}
\iso 
\mc E_{\F_{W}}(u_0)\times W\,,
\quad \mr{pr}_W\circ\Psi_W=\pi^\sharp_{|\mc E_{\F_W}}\,,
\quad
\mc E_{\F_{W}}(u):=\mc E_{\F_{W}}\cap\pi^{\sharp\,-1}(u)\,,
\end{equation}
that sends the singular locus of $\F_{W}^\sharp$ on the product  $\mr{Sing}(\F^\sharp_{W}(u_0))\times W$, with 
\begin{equation*}\label{fiberfolsharp}
\F_{W}^\sharp(u):=\F_{W}^\sharp|_{ \pi^{\sharp\,{-1}}(u)}\,.
\end{equation*}
Restricted to the fiber of $u\in W$, $\Psi_W$ provides a homeomorphism
that identifies the exceptional divisor  of the reduction of $\F_W(u)$, with that of $\F_W(u_0)$,
\[ \Psi_u : \mc E_{\F_W}(u)\iso \mc E_{\F_W}(u_0) \,. \]
Thus the \emph{holonomy} of the foliation $\F_W^\sharp(u)$ along an invariant  component $D_u=\Psi^{-1}_u(D_{u_0})$ may be considered as a morphism $\mc H_{D_u}$ from the fundamental group of $
D_{u_0}\setminus \mr{Sing}(\F_W(u_0))$ into the group $\mr{Diff}(\C,0)$ of germs of biholomorphisms of $(\C,0)$. 
\begin{defin}
We say that an equireducible family $\un\F_Q$ is \emph{equisingular at $u_0\in Q$} if there is a  trivializing neighborhood $W$ of $u_0$ such that  for any invariant irreducible component $D_{u_0}\subset \mc E_{\F_W}(u_0)$ and for any point $m_0\in\mr{Sing}(\F_W(u_0))\cap D_{u_0}$, we have:
\begin{enumerate}[(a)]
\item   there exist biholomorphisms $\ell_u\in\mr{Diff}(\C,0)$ depending holomorphically of $u\in W$ such that $\ell_u\circ \mc H_{D_u}(\cdot)\circ \ell_u^{-1}=\mc H_{D_{u_0}}(\cdot)$, 
\item  the  \emph{Camacho-Sad function } from $W$ to $\C$:
 \[ 
u\mapsto \mr{CS}( \F_W(u), D_u,m_u),\quad D_u:=\Psi_W^{-1}(D_{u_0}\times \{u\})
\,,\quad
m_u:=\Psi^{-1}_W(m_0,u)\,,
 \]
is constant.
\end{enumerate}
\end{defin}

A \emph{$\mc C^0$-conjugacy} between two global equireducible families $\un \F_Q=(M, \pi, \theta, \F_{Q})$ and $\un \F'_Q=(M', \pi', \theta', \F'_{Q})$ over the same parameter space $Q$, is a germ of  homeomorphism $\Phi:(M,\theta(Q))\iso (M',\theta'(Q))$ satisfying $\Phi(\F_Q)=\F'_Q$ and $\pi'\circ\Phi=\pi$. We also assume that $\Phi$ preserves the orientation of the ambient spaces and the orientation of the leaves.
We will say that $\Phi$ is  \emph{excellent} or \emph{of  class $\mc C^{\mr{ex}}$}, if  its lifting $\Phi^\sharp_W$ through any local equireduction maps $E_{\F_W}$ and $E_{\F'_W}$, $E_{\F'_W}\circ \Phi^\sharp_W=\Phi\circ E_{\F_W}$, extends to the exceptional divisors, providing a homeomorphism germ
\[ \Phi^\sharp_W:(M_{\F_W},\mc E_{\F_W})\iso (M_{\F'_W},\mc E_{\F'_W})
 \]
which is holomorphic at any singular point of the exceptional divisor $\mc E_{\F_W}$ and at any non nodal singular point  of the foliation $\F^\sharp_W$.
We will also say that \emph{$\Phi^\sharp_{W}$ is excellent}.
\bigskip

Let $\mu:P\to Q$ be a holomorphic map and let $\un\F_Q=(M,\pi,\theta,\F_Q)$ be a global family  of foliations over $Q$. We consider the fibered product $\mu^*M=M\times_Q P\subset M\times P$ 
with the projection $\mu^*\pi:M\times_Q P\to P$, 
\[\xymatrix{\mu^*M\ar[d]^{\mu^*\pi} \ar[r]^{\rho_\mu} & M\ar[d]^{\pi}\\
P\ar[r]^{\mu}& Q}\qquad\qquad \xymatrix{(\mu^*M)(t)\ar[d]\ar[r]^{\sim}_{\rho_\mu} & M(\mu(t))\ar[d]\\
t\ar@{|->}[r]& \mu(t)}\] 
and the section $\mu^*\theta=(\theta\circ\mu)\times\mr{id}_P:P\to M\times_Q P$.
Since the restrictions to each fiber of the canonical submersion $\rho_\mu$ are biholomorphisms, there is a unique one-dimensional foliation germ $\mu^*\F_Q$ on $\mu^*M$ along $(\mu^*\theta)(P)$, tangent to the fibers of $\mu^*\pi$, such that $\rho_\mu$ sends the leaves of $\mu^*\F_Q$ into the leaves of $\F_Q$. We will call $\mu^*\un\F_Q=(\mu^*M,\mu^*\pi,\mu^*\theta,\mu^*\F_Q)$ the \emph{pull-back} of the global family $\un\F_Q$ by the map $\mu:P\to Q$.
Equisingularity is a local property in the parameters, by 
\cite[Proposition~3.7]{MMSII} it is preserved by pull-back. Moreover, if two global equisingular families $\un\F_Q$ and $\un\G_Q$ are $\mc C^0$ (resp. \Cex) conjugated by a homeomorphism $\Phi$ then so are $\mu^*\un\F_Q$ and $\mu^*\un\G_Q$ by $\mu^*\Phi=\Phi\times\mr{id}_P$.\\

 Let  $u_0$ be a point of $Q$ and let $\F$ be a germ of foliation  at a point $m_0$ of a two dimensional complex manifold  $M_0$. An \emph{equisingular deformation of $\F$ over the germ of manifold $(Q,u_0)$} is the data $(\un\F_{Q,u_0}, \iota)$ of the germ at $\theta(u_0)$ of an equisingular family $\un\F_Q=(M, \pi, \theta, \F_{Q})$ together with the germ of an embedding $\iota :(M_0,m_0)\hookrightarrow (M,\theta(u_0))$ that sends  $\F$ to the restricted foliation germ $\F_{Q}(u_0)$ on the \emph{special fiber} $M(u_0)$.

\begin{defin}\label{conj-def}
 A  \emph{$\mc C^0$ (resp. $\mc C^{\mr{ex}}$) conjugacy between two equisingular deformations} $(\un\F_{Q,u_0},\iota)$ and $(\un\F'_{Q,u_0},\iota')$ is a $\mc C^0$ (resp. $\mc C^{\mr{ex}}$) conjugacy $\Phi$ between their associated families, $\Phi(\un\F_{Q,u_0})=\un\F'_{Q,u_0}$, such that $\Phi\circ\iota=\iota'$. We will denote by
 $\mr{Def}_\F^{Q^\point}$ the set of \Cex-conjugacy classes of equisingular deformations of $\F$ over the germ of manifold $Q^\point:=(Q,u_0)$.   
\end{defin}

If $\mu:(P,t_0)\to (Q,u_0)$ is a holomorphic map germ and
$(\un\F_{Q,u_0},\iota)$ is an equisingular deformation of $\F$ over $(Q,u_0)$, then
 $(\mu^*\un\F_{Q,u_0},\mu^*\iota)$ is an equisingular deformation of $\F$ over $(P,t_0)$ where $\mu^*\iota$ is defined by $\rho_\mu\circ \mu^*\iota=\iota$ (recall that the restriction of $\rho_\mu$ to the fiber over $t_0$ is a biholomorphism onto the fiber over~$u_0$).

\begin{defin}\label{defdefuniv} Let 
 $(\un\F_{Q^\point},\iota)$ be an equisingular deformation over a germ of manifold 
$Q^\point:=(Q,u_0)$,  
of a   foliation germ $\F$. We say that $(\un\F_{Q^\point},\iota)$ is a  \emph{$\mc C^{\mr{ex}}$-universal deformation of $\F$} if for any germ of manifold $P^\point=(P,t_0)$ and any equisingular deformation $(\un\G_{P^\point},\delta)$ of $\F$ over $P^\point$, there exists a unique germ of holomorphic map 
$\lambda : P^{\point}\to Q^{\point}$ such that the deformations $(\un\G_{P^\point},\delta)$ and
 $\lambda^\ast (\un\F_{Q^\point},\iota)$  of $\F$ are $\mc C^{\mr{ex}}$-conjugated.
\end{defin} 

%For each germ of manifold $Q^\point$ and each generalized curve foliation $\F$ let us denote by $\mr{Def}_\F^{Q^\point}$ the set of \Cex-conjugacy classes of equisingular deformations of $\F$ over $Q^\point$.   

\begin{obs}\label{rempreliminaires}
Notice that if $\mu :Q{'}^\point\to Q^\point$ is a germ of biholomorphism, the $\mc C^\mr{ex}$-universality of $(\un\F_{Q^\point},\iota)$ and of $\mu^\ast(\un\F_{Q^\point},\iota)$ are clearly equivalent. 
On the other hand, it directly results from the definition that the  $\mc C^{\mr{ex}}$-universality of $(\un\F_{Q^\point},\iota)$ only depends on its \Cex -class of conjugacy $[\un\F_{Q^\point},\iota]\in\mr{Def}_\F^{Q^\point}$. We will  then say that $[\un\F_{Q^\point},\iota]$ is \emph{$\mc C^{\mr{ex}}$-universal}. 
\end{obs}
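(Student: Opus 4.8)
The plan is to derive both assertions formally from the uniqueness clause in Definition~\ref{defdefuniv}, using two properties of pull-backs already recorded in~\S\ref{sec-equi-global}. \emph{Functoriality}: for composable germs of holomorphic maps $\lambda:P^\point\to Q{'}^\point$ and $\mu:Q{'}^\point\to Q^\point$, the canonical biholomorphism of iterated fibered products yields a $\mc C^{\mr{ex}}$-conjugacy of deformations $\lambda^\ast\mu^\ast(\un\F_{Q^\point},\iota)\iso(\mu\circ\lambda)^\ast(\un\F_{Q^\point},\iota)$, compatible with the embeddings by the defining relation $\rho_\mu\circ\mu^\ast\iota=\iota$. \emph{Stability of conjugacies}: $\mu^\ast$ carries a $\mc C^{\mr{ex}}$-conjugacy of deformations $\Phi$ to $\Phi\times\mr{id}$. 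I also use that $\mc C^{\mr{ex}}$-conjugacy of deformations of $\F$ is an equivalence relation and that the pull-back of an equisingular deformation of $\F$ is again one; for brevity, $X\sim Y$ abbreviates ``$X$ and $Y$ are $\mc C^{\mr{ex}}$-conjugated as deformations of $\F$''.

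For the first assertion, suppose $(\un\F_{Q^\point},\iota)$ is $\mc C^{\mr{ex}}$-universal and $\mu:Q{'}^\point\to Q^\point$ is a germ of biholomorphism. Given an equisingular deformation $(\un\G_{P^\point},\delta)$ of $\F$, take the unique $\nu:P^\point\to Q^\point$ with $\nu^\ast(\un\F_{Q^\point},\iota)\sim(\un\G_{P^\point},\delta)$ and set $\lambda:=\mu^{-1}\circ\nu$; then $\lambda^\ast\mu^\ast(\un\F_{Q^\point},\iota)\sim(\mu\circ\lambda)^\ast(\un\F_{Q^\point},\iota)=\nu^\ast(\un\F_{Q^\point},\iota)\sim(\un\G_{P^\point},\delta)$, which gives existence. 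For uniqueness, if $\lambda_1,\lambda_2$ both satisfy $\lambda_i^\ast\mu^\ast(\un\F_{Q^\point},\iota)\sim(\un\G_{P^\point},\delta)$, then $(\mu\circ\lambda_i)^\ast(\un\F_{Q^\point},\iota)\sim(\un\G_{P^\point},\delta)$ for $i=1,2$, so uniqueness for $(\un\F_{Q^\point},\iota)$ forces $\mu\circ\lambda_1=\mu\circ\lambda_2$, hence $\lambda_1=\lambda_2$ since $\mu$ is invertible. Thus $\mu^\ast(\un\F_{Q^\point},\iota)$ is $\mc C^{\mr{ex}}$-universal; the converse is the very same argument with $\mu$ replaced by $\mu^{-1}$ and $(\un\F_{Q^\point},\iota)$ by the equisingular deformation $\mu^\ast(\un\F_{Q^\point},\iota)$, so the two universalities are equivalent.

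For the second assertion, let $(\un\F_{Q^\point},\iota)$ be $\mc C^{\mr{ex}}$-universal and let $(\un\F'_{Q^\point},\iota')$ be a deformation over the same $Q^\point$ with $[\un\F'_{Q^\point},\iota']=[\un\F_{Q^\point},\iota]$ in $\mr{Def}_\F^{Q^\point}$, realized by a $\mc C^{\mr{ex}}$-conjugacy $\Psi:(\un\F_{Q^\point},\iota)\iso(\un\F'_{Q^\point},\iota')$. For an equisingular deformation $(\un\G_{P^\point},\delta)$ of $\F$, take the unique $\lambda:P^\point\to Q^\point$ with $\lambda^\ast(\un\F_{Q^\point},\iota)\sim(\un\G_{P^\point},\delta)$; pulling $\Psi$ back by $\lambda$ gives $\lambda^\ast(\un\F'_{Q^\point},\iota')\sim\lambda^\ast(\un\F_{Q^\point},\iota)\sim(\un\G_{P^\point},\delta)$, so $\lambda$ works for $(\un\F'_{Q^\point},\iota')$ as well. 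If $\lambda_1,\lambda_2$ both work for $(\un\F'_{Q^\point},\iota')$, then pulling $\Psi^{-1}$ back shows each $\lambda_i^\ast(\un\F_{Q^\point},\iota)\sim(\un\G_{P^\point},\delta)$, whence $\lambda_1=\lambda_2$ by uniqueness for $(\un\F_{Q^\point},\iota)$. Hence $(\un\F'_{Q^\point},\iota')$ is $\mc C^{\mr{ex}}$-universal, and since the hypothesis is symmetric in the two deformations, $\mc C^{\mr{ex}}$-universality depends only on the class $[\un\F_{Q^\point},\iota]\in\mr{Def}_\F^{Q^\point}$; in particular the closing sentence of the remark is well-posed.

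I do not expect a genuine obstacle: the whole argument is simply the transport of a universal property along biholomorphisms and conjugacies. The one point requiring care is that every pull-back and every canonical isomorphism used above must be taken as a morphism of \emph{deformations}, i.e. compatible with the marked embeddings $\iota,\iota',\delta$, and not merely of the underlying global families; this is exactly what the identities $\Phi\circ\iota=\iota'$ and $\rho_\mu\circ\mu^\ast\iota=\iota$ recorded in~\S\ref{sec-equi-global} guarantee.
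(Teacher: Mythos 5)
Your proposal is correct and is exactly the routine transport-of-universal-property argument that the paper leaves implicit when it calls these facts ``clear'' and ``direct from the definition'': existence and uniqueness of the factorizing map are carried along the biholomorphism $\mu$ via $\lambda^\ast\mu^\ast\cong(\mu\circ\lambda)^\ast$, and along a conjugacy $\Psi$ via its pull-backs, with compatibility of the embeddings checked as you note. The only (harmless) bookkeeping point is that your ``same argument with $\mu^{-1}$'' step for the converse implicitly invokes the second assertion (or the canonical identification $(\mu^{-1})^\ast\mu^\ast(\un\F_{Q^\point},\iota)\sim(\un\F_{Q^\point},\iota)$), whose proof you give independently, so no circularity arises.
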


\begin{teo}[{\cite[Theorem 3.11 and Corollary 6.8]{MMSII}}]\label{phi*}
Let $\F$ and $\G$ be foliations of finite type which are  generalized curves and let
$\phi$ be an excellent conjugacy between $\G$ and $\F=\phi(\G)$.
If  $(\un\F_{Q^\point},\iota)$ is an equisingular deformation of $\F$ over $Q^\point$, there is an equisingular deformation $(\un\G_{Q^\point},\delta)$ of $\G$ over $Q^\point$ and an excellent conjugacy of families $\Phi:\un\G_{Q^\point}\to\un\F_{Q^\point}$ such that $\Phi\circ \delta=\iota\circ\phi$. 
Moreover, the map 
\[\phi^*:\mr{Def}_\F^{Q^\point}\to\mr{Def}_\G^{Q^\point}\,,\quad [\un\F_{Q^\point},\iota]\mapsto[\un\G_{Q^\point},\delta]\] 
is well defined, bijective and sends any class of \Cex-universal deformation of $\F$ to a class of \Cex-universal deformation of $\G$.
\end{teo}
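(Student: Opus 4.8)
The plan is to prove the existence statement by a local-to-global transport over the reduction of singularities, and then to read off the properties of $\phi^{*}$ by formal functoriality.

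\textbf{Construction of $(\un\G_{Q^\point},\delta)$ and $\Phi$.} After shrinking $Q^{\point}$ to a trivializing neighbourhood $W$ I would pass to equireductions: the deformation $\un\F_{Q^\point}$ lifts to a germ along the divisor of an equisingular family $\un\F^{\sharp}_{Q^\point}$ of reduced foliations on $M_{\F_{W}}$, with exceptional divisor $\mc E_{\F_{W}}$ topologically trivial over $W$, and the excellent conjugacy $\phi$ lifts to $\phi^{\sharp}:(M_{\G},\mc E_{\G})\iso(M_{\F},\mc E_{\F})$, biholomorphic at the non-nodal singular points. The family $\un\F^{\sharp}_{Q^\point}$ is assembled from local pieces, and the idea is to build $\un\G^{\sharp}_{Q^\point}$ by transporting it through $\phi^{\sharp}$ over a finite open cover of $\mc E_{\G}$: near a non-nodal singular point $p$, where $\phi^{\sharp}$ is a biholomorphism onto a neighbourhood of $\phi^{\sharp}(p)$, one transports the restricted family verbatim; near a nodal singular point one transports using the local analytic normal form and the equisingularity hypothesis (which controls the holonomy along the divisor component); over a tubular piece of an invariant component away from the singular points, where the reduced foliation is a suspension of its holonomy, one uses a Mattei--Moussu-type rigidity to see that the topological conjugacy $\phi^{\sharp}$ is there an \textit{analytic} conjugacy of holonomy representations, so the family of suspensions transports; and over a dicritical component one uses the flow-box structure transverse to the divisor. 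On the overlaps the two transports differ by a holomorphic automorphism of the local model preserving the foliation; these discrepancies form a $1$-cocycle which, being induced by the single globally defined $\phi^{\sharp}$, is cohomologous to the structural cocycle of $\un\F^{\sharp}_{Q^\point}$ and therefore glues --- this is where the finite type hypothesis enters, guaranteeing that the controlling cohomology $H^{1}(\A_{\F},\T_{\F})$ is finite-dimensional. Contracting the exceptional divisor fibrewise over $W$ then produces an equisingular family $\un\G_{Q^\point}$, and the glued transports produce an excellent conjugacy of families $\Phi:\un\G_{Q^\point}\to\un\F_{Q^\point}$. By the Mattei--Moussu theorem (a generalized curve is determined by its holonomy data) the special fiber of $\un\G_{Q^\point}$ is biholomorphic to $\G$; one takes for $\delta$ such a biholomorphism, normalized so that $\Phi\circ\delta=\iota\circ\phi$ --- a normalization that alters $\Phi$ on the special fiber only by a biholomorphism and hence preserves its excellence.

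\textbf{The map $\phi^{*}$.} It is independent of the choices made: two outcomes of the construction differ, after passing to reductions, by an excellent automorphism of $\un\F^{\sharp}_{Q^\point}$ that is the identity on the special fiber, and by the same rigidity such an automorphism is a gauge transformation of the local pieces, hence conjugates the two outcomes compatibly with the embeddings; likewise $\phi^{*}$ depends only on the $\mc C^{\mr{ex}}$-class $[\un\F_{Q^\point},\iota]$. It is bijective because the inverse of an excellent homeomorphism is excellent, so that $(\phi^{-1})^{*}$ is defined, and because the construction is visibly functorial: $\mr{id}^{*}=\mr{id}$ and $(\chi\circ\psi)^{*}=\psi^{*}\circ\chi^{*}$ (transporting through a composite of conjugacies is composing the transports), whence $(\phi^{-1})^{*}\circ\phi^{*}=\mr{id}$ and $\phi^{*}\circ(\phi^{-1})^{*}=\mr{id}$. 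Finally, since $\phi^{\sharp}$ does not involve the parameter, the construction commutes with base change: $\phi^{*}\circ\lambda^{*}=\lambda^{*}\circ\phi^{*}$ for every holomorphic germ $\lambda:P^{\point}\to Q^{\point}$. Using this, suppose $(\un\F_{Q^\point},\iota)$ is $\mc C^{\mr{ex}}$-universal for $\F$, and let $(\un{\mathcal H}_{P^\point},\varepsilon)$ be an arbitrary equisingular deformation of $\G$; then $(\phi^{-1})^{*}(\un{\mathcal H}_{P^\point},\varepsilon)$ is a deformation of $\F$, the universal property yields a unique $\lambda:P^{\point}\to Q^{\point}$ with $\lambda^{*}(\un\F_{Q^\point},\iota)\cong(\phi^{-1})^{*}(\un{\mathcal H}_{P^\point},\varepsilon)$, and applying $\phi^{*}$ together with the two commutations gives $\lambda^{*}\big(\phi^{*}(\un\F_{Q^\point},\iota)\big)\cong(\un{\mathcal H}_{P^\point},\varepsilon)$; uniqueness of $\lambda$ is inherited because $\phi^{*}$ is a bijection between the corresponding $\mr{Def}$-sets. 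Hence $\phi^{*}[\un\F_{Q^\point},\iota]$ is $\mc C^{\mr{ex}}$-universal for $\G$.

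\textbf{Main obstacle.} The heart of the argument is the transport over the non-exceptional pieces of the divisor together with the gluing: one must know that a topological conjugacy between these reduced generalized curve foliations which is holomorphic at the non-nodal singular points automatically conjugates \textit{analytically} the holonomy representations along each invariant component --- so that the suspension structure genuinely transports holomorphically --- and that the local models so obtained patch into a \textit{holomorphic} manifold over $W$ carrying a one-dimensional foliation tangent to the fibres. This is exactly where the finite type hypothesis, the rigidity of holonomy for generalized curves, and the local analytic study of nodal singularities are used; once this core transport-and-glue statement is in place, everything else is bookkeeping.
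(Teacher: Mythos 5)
This statement is not proved in the paper at all: it is imported verbatim from \cite{MMSII} (Theorem 3.11 and Corollary 6.8), so your proposal can only be measured against the construction used there, whose ingredients are visible in the present text (Theorem~\ref{psiD} and Step~1 of the proof of Theorem~\ref{localuniv}). Your formal second half is essentially fine: well-definedness is in fact a two-line diagram chase needing no rigidity (if $\Phi\circ\delta=\iota\circ\phi=\Phi'\circ\delta'$, then $\Phi'^{-1}\circ\Phi$ conjugates the two deformations of $\G$ compatibly with $\delta,\delta'$), and bijectivity and the transfer of universality do follow formally from $(\phi\circ\psi)^*=\psi^*\circ\phi^*$ and $\phi^*\circ\lambda^*=\lambda^*\circ\phi^*$, exactly as the functoriality of $\mr{Def}$ is used in the sequel. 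The genuine gaps are in the construction of $(\un\G_{Q^\point},\delta)$ and $\Phi$.

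First, the gluing step is asserted, not proved, and the mechanism you invoke is wrong: finite-dimensionality of $H^1(\A_\F,\T_\F)$ never forces a discrepancy cocycle to vanish or to be ``cohomologous to the structural cocycle'', and finite type is not what makes the transport possible --- it enters only in the last assertion, through the Kodaira--Spencer criterion of Theorem~\ref{universal-criterion}. The actual mechanism which removes any obstruction is Theorem~\ref{psiD}: an equisingular family is \Cex-trivial on a full neighborhood of each irreducible component, so its gluing data is a cocycle of biholomorphism germs supported at the corners, equal to the identity at nodal and regular corners; conjugating that cocycle by $\phi^\sharp$ (holomorphic at the non-nodal corners) and regluing along the tree $\A_{\mc E}$ requires no compatibility condition at all. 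Second, your transport over the tubular parts rests on the claim that $\phi^\sharp$ conjugates the holonomy representations \emph{analytically} ``by Mattei--Moussu-type rigidity'': no rigidity is available under the hypotheses (transverse rigidity is precisely the extra assumption (TR) for tame foliations, not assumed here); what is true comes from holomorphy of $\phi^\sharp$ at a non-nodal singular point of the component, which fails for a component all of whose singularities are nodal corners, and in any case the compatibility of this holonomy-theoretic transport with the transports near the singular points is exactly the unproved gluing. Third, the sentence ``by the Mattei--Moussu theorem (a generalized curve is determined by its holonomy data) the special fiber of $\un\G_{Q^\point}$ is biholomorphic to $\G$'' is false as a principle --- if generalized curves were determined up to biholomorphism by holonomy data, the moduli space $\mr{Mod}([\F^\diam])$ studied in this series would collapse; in the correct construction the special fiber is \emph{literally} $\G$, because the trivializations of Theorem~\ref{psiD} are the identity over the base point, so the transported cocycle restricts to the identity over $u_0$ and $\delta$ is the canonical inclusion, with $\Phi$ obtained by gluing the maps $\Psi_D^{-1}\circ(\phi^\sharp\times\mr{id})$.
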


\subsection{Marked foliations and families}\label{marquages}

Now, we fix for all the sequel a \emph{marked divisor} 
$\mc E^\diam = (\mc E, \Sigma,\cdot)$
in the sense of \cite[\S2.1]{MMS}, i.e.  a connected compact complex curve with normal crossings $\mc E$, endowed with a finite subset $\Sigma$ of $\mc E$ and a symmetric map $\mr{Comp}(\mc E)^2\to \Z$, $(D,D')\mapsto D\cdot D'$, where $\mr{Comp}(\mc E)$ denotes the set of irreducible components of $\mc E $. The components of $\mc E$ without any point of $\Sigma$ are called \emph{dicritical components}, the others being called  \emph{invariant components}. 
The \emph{mapping class group $\mr{Mcg}(\mc E^\diam)$ of $\mc E^\diam$} is the group of isotopy classes $\dot\varphi$ relatively to $\Sigma$ of orientation preserving homeomorphism $\varphi:\mc E\to\mc E$ such that $\varphi(\Sigma)=\Sigma$ and $\varphi(D)\cdot\varphi(D')=D\cdot D'$.\\

A \emph{marked by $\mc E^\diam$ foliation}
 is a pair $\F^\diam=(\F,f)$ where 
\begin{itemize}
\item $\F$ is a germ (at $m_0$) of a holomorphic foliation on a  2-dimensional manifold  $(M_0, m_0)$,
\item  $f$ is an orientation preserving homeomorphism, called \emph{marking of $\F$}, from $\mc E$ to the exceptional divisor $\mc E_\F$ of the reduction  of $\F$ such that: $f(\Sigma)$ is the singular set $\mr{Sing}(\F^\sharp)$  of the reduced foliation $\F^\sharp$, and  $D\cdot D'$ is equal to the intersection number of $f(D)$ with $f(D')$ in $M_\F$ for any components $D$, $D'$ of $\mc E$. Moreover we  will also suppose that $f$ is holomorphic at each point of $\Sigma\cup\mr{Sing}(\mc E)$.
\end{itemize}
\noindent We assume that  there exists a foliation germ that can be marked by  $\mc E^\diam$.
%, consequently  the dual graph $\A_{\mc E}$  of $\mc E$ is necessarily  a tree.
\\

Two markings $f$ and $g$ of $\F$ by  $\mc E^\diam$ will be  called \emph{equivalent} if the homeomorphism $g^{-1}\circ f$  is isotopic to the identity map 
of $\mc E$
by 
an isotopy leaving fixed  $\Sigma$.
A \emph{$\mc C^{\mr{ex}}$-conjugacy between two marked by $\mc E^\diam$ foliations} $\F^\diam=(\F,f)$ and $\G^\diam=(\G,g)$ is a germ $\phi$  of $\mc C^{\mr{ex}}$-conjugacy 
between these foliation germs, $\phi(\F)=\G$, such that $g$ and 
$\phi^\sharp\circ f$ are equivalent markings by $\mc E^\diam$ of $\G^\diam$, $\phi^\sharp$ being the lifting of $\phi$ through the reduction maps.
We then write $\F^\diam\sim_{\mc C^{\mr{ex}}}\G^\diam$ and we will denote by $[\F^\diam]$ the \Cex-conjugacy class of $\F^\diam$.\\

A \emph{pre-marking by $\mc E^\diam$ of an equireducible global family $\un\F_{Q}$} is a collection
\[
(f_u)_{u\in Q},\qquad f_u:\mc E\iso \mc E_{\F_Q(u)}\,,
\]
of markings $f_u$  for each foliation $\F_{Q}(u)$. Two pre-markings  $(f_u)_{u\in Q}$ and $(g_u)_{u\in Q}$ of $\un{\F}_Q$ of the same global  family will be called \emph{equivalent} if for each $u\in Q$ the markings $f_u$ and $g_u$ of $\F_Q(u)$ are equivalent. 
A \emph{marking of an equireducible global family $\un\F_{Q}$} is a pre-marking that satisfies the following \emph{local coherence property}: 
at any point $u_0\in Q$ there is  an equireduction neighborhood $W$ of $u_0$ and a trivializing homeomorphism  $\Psi_W$ as in (\ref{trivdivex}) such that the pre-marking $(f_u)_{u\in W}$ of $\un\F_Q$ over $W$ is equivalent to the pre-marking $(\Psi_u^{-1}\circ f_{u_0}
)_{u\in W}$, where $\Psi_u :\mc E_{\F_Q(u)}\iso \mc E_{\F_Q(u_0)} $ is the restriction of $\Psi_W$ to the fiber over $u$.
A \emph{marked by $\mc E^\diam$ global family over a manifold $Q$} is the data 
\[
\un\F_{Q}^\diam=(\un\F_{Q},(f_u)_{u\in Q})
\]
of an equireducible global family over $Q$ and a marking by $\mc E^\diam$ of this family. The \emph{fiber at $u\in Q$} of $\un\F_{Q}^\diam$ is the marked by $\mc E^\diam$   foliation $\F_{Q}^\diam(u):=(\F_{Q}(u), f_u)$. 
\begin{obs}\label{prglobalmark}
One can check that the set over $Q$ of the  equivalence classes of markings by $\mc E^\diam$ of the foliations $\F_Q(u)$, $u\in Q$,   can be endowed with a topology such that 
 it becomes a covering over $Q$ (the local coherence property being equivalent to the existence of continuous local sections) and
 the markings of $\un\F_Q$ are continuous global sections. In particular:
\begin{enumerate}[(a)]
\item\label{remfamslmark}  when $Q$ is connected, two markings of $\un\F_Q$ are equivalent as soon as,  up to an isotopy leaving  $\Sigma$ invariant,  they coincide at  some point $u_0\in Q$,
\item\label{extmarking} when  $Q$ is connected and  simply connected, any marking $f_{u_0}$ of the foliation ${\F}_Q(u_0)$  for some $u_0\in Q$,  extends to a marking $(f_u)_{u\in Q}$ of $\un{\F}_Q$, that is unique up to equivalence. 
\end{enumerate}
\end{obs}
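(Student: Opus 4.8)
The plan is to organise the classes of markings into a covering space of $Q$ and then to deduce every assertion from elementary covering theory. Put $\mc M:=\bigsqcup_{u\in Q}\mc M_u$, where $\mc M_u$ denotes the set of good-isotopy classes of markings of $\F_Q(u)$ by $\mc E^\diam$, and let $\varpi\colon\mc M\to Q$ be the tautological projection. To topologise $\mc M$, fix for each $u_0\in Q$ a connected trivialising neighbourhood $W\ni u_0$ together with a trivialising homeomorphism $\Psi_W$ as in (\ref{trivdivex}); by equireducibility (cf.\ \cite[\S2.2]{MMSII} and \cite[Ch.~10]{MMS}) one may take $\Psi_W$ holomorphic near $\mr{Sing}(\F_W^\sharp)\cup\mr{Sing}(\mc E_{\F_W})$, so that for every marking $f$ of $\F_Q(u_0)$ the maps $\Psi_u^{-1}\circ f\colon\mc E\to\mc E_{\F_Q(u)}$ are again genuine markings ($\Psi_u\colon\mc E_{\F_Q(u)}\iso\mc E_{\F_Q(u_0)}$ being the restriction of $\Psi_W$). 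This gives set-theoretic sections $s_{W,f}\colon W\to\mc M$, $u\mapsto[\Psi_u^{-1}\circ f]$, and I declare the images $U_{W,f}:=s_{W,f}(W)$ to form a basis of open sets of $\mc M$.

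The content of the first paragraph of the remark is that $\varpi$ is then a covering and the $U_{W,f}$ are evenly covered. Over a trivialising neighbourhood $W$ centred at $u_0$, $\Psi_u$ induces a bijection $\mc M_u\iso\mc M_{u_0}$ (being a homeomorphism carrying $\mr{Sing}$ to $\mr{Sing}$, it is compatible with good isotopies and with the intersection combinatorics of $\mc E$), whence $\varpi^{-1}(W)=\bigsqcup_{[f]\in\mc M_{u_0}}U_{W,f}$; the union is disjoint because post-composing two markings of $\F_Q(u_0)$ with the same homeomorphism $\Psi_u^{-1}$ does not affect whether they are good-isotopic, and each $s_{W,f}$ is injective. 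It remains to check that the $U_{W,f}$ indeed form a basis, equivalently that the change of chart between two such trivialisations is locally constant; at a point $v\in W\cap W'$ this is the assertion that the good-isotopy class of the continuous one-parameter family of markings $u\mapsto\Psi^{W'}_u\circ(\Psi^W_u)^{-1}\circ f$ of $\F_Q(u_0')$ is locally constant in $u$. After composing with a fixed marking of $\F_Q(u_0')$ this reduces to the statement that the path-components of the space of self-homeomorphisms of $\mc E$ preserving $\Sigma$ and holomorphic near $\Sigma\cup\mr{Sing}(\mc E)$ are precisely its good-isotopy classes, i.e.\ to the discreteness of the mapping class group $\mr{MCG}(\mc E^\diam)$. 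I expect this to be the one genuine obstacle; I would invoke it from \cite{MMS} (it reduces, $\A_{\mc E}$ being a tree and there being no Dehn twists around the nodes, to the classical finite generation of the mapping class groups of finitely punctured spheres, so that in particular $\mr{MCG}(\mc E^\diam)$ is countable). Everything else is bookkeeping.

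Granting that $\varpi\colon\mc M\to Q$ is a covering, the three remaining points follow at once. A pre-marking $(f_u)_{u\in Q}$ is tautologically a set-theoretic section $u\mapsto[f_u]$ of $\varpi$, and the local coherence property says precisely that on some connected trivialising $W\ni u_0$ this section coincides with $s_{W,f_{u_0}}$, hence is continuous there; conversely, a continuous section restricted to a suitable connected $W$ lands in a single sheet $U_{W,f}$ and thus equals $s_{W,f}$, which is local coherence. So markings of $\un\F_Q$ are exactly the continuous global sections. For \ref{remfamslmark}: if $Q$ is connected, two markings of $\un\F_Q$ are continuous sections of $\varpi$, and the locus where two sections of a covering over a connected base coincide is open and closed; hence if they coincide up to a good isotopy at one point they coincide everywhere, i.e.\ the markings are equivalent. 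For \ref{extmarking}: if $Q$ is connected and simply connected, $\varpi$ is a trivial covering, so the component of $\mc M$ through $[f_{u_0}]\in\mc M_{u_0}$ maps homeomorphically onto $Q$ and provides a continuous section $\sigma$ with $\sigma(u_0)=[f_{u_0}]$; choosing representatives $f_u$ of $\sigma(u)$ yields a marking $(f_u)_{u\in Q}$ of $\un\F_Q$ extending $f_{u_0}$, and any two such are continuous sections agreeing at $u_0$, hence equivalent by \ref{remfamslmark}.
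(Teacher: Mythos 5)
Your proposal is correct and is essentially the argument the paper intends: the remark is left as ``one can check'', and your construction of the covering from the equireduction trivializations $\Psi_W$ (with the sensible normalization that the $\Psi_u$ be holomorphic on the divisor near $\Sigma\cup\mr{Sing}(\mc E)$) together with standard covering-space theory for (a) and (b) is exactly that check. One simplification: the step you flag as the ``one genuine obstacle'' does not require discreteness of $\mr{MCG}(\mc E^\diam)$ --- since $W\cap W'$ is locally path-connected, restricting the continuous family $u\mapsto h_u$ of homeomorphisms fixing $\Sigma$ along a path from $v$ to $u$ is itself a good isotopy, so the good-isotopy class is locally constant by concatenation, with no appeal to openness of isotopy classes in the homeomorphism group.
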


A   \Cex-conjugacy between two marked by $\mc E^\diam$ global families $(\un\F_Q,(f_u)_{u\in Q})$ and $(\un\G_Q,(g_u)_{u\in Q})$ is a \Cex-conjugacy of global families $\Phi(\un\F_Q)=\un\G_Q$  such that the restriction $\Phi_u$ of $\Phi$ to each fiber is a \Cex-conjugacy between the corresponding marked by $\mc E^\diam$ foliations, i.e. $g_u^{-1}\circ\Phi_u^\sharp\circ f_u$  is isotopic to the identity map of $\mc E$ relatively to $\Sigma$.\\

For  a marked by $\mc E^\diam$ foliation $\F^\diam=(\F,f)$ and  an invariant component $D$ of $\mc E$,
we will denote by $[\mc H_D^{\F^\diamond}]$ the class, up to composition by  inner automorphisms of $ \mr{Diff}(\C,0)$, of the group morphism

\begin{equation*}
\mc H_D^{\F^\diam}:\pi_1(D^\ast, o_D)\to \mr{Diff}(\C,0)\,, 
\quad o_D\in D^\ast:=D\setminus \mr{Sing}(\F^\sharp)\,,
\end{equation*}
where $\mc H_D^{\F^\diam}(\dot\gamma)$ is the holonomy of the foliation $\F^\sharp$ along the loop $f\circ \gamma$ in $f(D)$. We also call \emph{Camacho-Sad index of $\F^\diam$} at a point $m\in D\cap\Sigma$ and we write $\mr{CS}(\F^\diam,D,m)$ the Camacho-Sad index of $\F^\sharp$ along $f(D)$ at the point $f(m)$. 

\begin{defin}\label{deftypeSL}
Two marked by $\mc E^\diam$ foliations $\F^\diam$ and $\G^\diam$ are \emph{SL-equivalent}, and we denote $\F^\diam\sim_{\mr{SL}}\G^\diam$,  if for any invariant  component $D$ of $\mc E$  and for any point $m\in D\cap\Sigma$ we have:
\[[\mc H_D^{\F^\diam}]=[\mc H_D^{\G^\diam}]\quad\text{and}\quad\mr{CS}(\F^\diam, D, m)=\mr{CS}(\G^\diam, D, m)\,.\]
\end{defin}
Clearly $\sim_{\mr{SL}}$ is a weaker equivalence relation than $\sim_{\mc C^{\mr{ex}}}$  on the (non-empty) set $\mr{Fol}(\mc E^\diam)$ of marked by $\mc E^\diam$ foliation germs. 
We will denote by 
\begin{itemize}
\item $\mr{SL}(\F^\diam) :=\{\G^\diam\in\mr{Fol}(\mc E^\diam)\;;\; \G^\diam\sim_{\mr{SL}}\F^\diam\}$  the $\sim_{\mr{SL}}$-class of $\F^\diam$, 
\item $\mr{SL}_Q(\F^\diam)$
the collection of all  marked by $\mc E^\diam$ equisingular   global   families $\un\F^\diam_Q$ over $Q$ such that  any fiber $\un\F_Q^\diam(u)$, $u\in Q$ is SL-equivalent to $\F^\diam$.
\end{itemize}

 Notice that if $\varphi:\mc E\iso\mc E$ is an homeomorphism such that $\dot\varphi\in\mr{Mcg}(\mc E^\diam)$ then
 \begin{equation}
 \label{SL-phi}
 (\F,f)\sim_{\mr{SL}}(\G,g)\Longrightarrow(\F,f\circ\varphi^{-1})\sim_{\mr{SL}}(\G,g\circ\varphi^{-1}).
 \end{equation}

\begin{defin}\label{IF}
The mapping class group $\mr{Mcg}(\mc E^\diam)$ acts on the set $\mr{Fol}(\mc E^\diam)/\!\sim_{\mc C^{\mr{ex}}}$ of \Cex-conjugacy classes of marked by $\mc E^\diam$ foliations by
\begin{equation}\label{star}
\dot\varphi\star[\F,f]:=[\F,f\circ\varphi^{-1}].
\end{equation}
If $\F^\diam=(\F,f)\in\mr{Fol}(\mc E^\diam)$ the subgroup 
\begin{equation}\label{IF0}
\mbf I_{\F^\diam}:=\{\dot\varphi\in\mr{Mcg}(\mc E^\diam)\,;\, (\F,f\circ\varphi^{-1
})\sim_{\mr{SL}}(\F,f)\}
\end{equation}
 leaves invariant the set
\[ \mr{Mod}([\F^\diam])
:=\{[\G^\diam]\;;\;\G^\diam\in \mr{SL(\F^\diam)} \}\]
of all $\mc C^{\mr{ex}}$-conjugacy classes $[\G^\diam]$ of marked by $\mc E^\diam$ foliations  $\G^\diam\in \mr{SL(\F^\diam)}$, called \emph{topological moduli space of $[\F^\diam]$.} Then (\ref{star}) defines an action $\star$ of $\mbf I_{\F^\diam}$ on $\mr{Mod}([\F^\diam])$.
\end{defin}
It is easy to check that the right hand sides of (\ref{star}) and (\ref{IF0}) are well-defined, i.e. do not depend on the choice of the representatives of the classes $[\F,f]$ and $\dot\varphi$.

\begin{prop}\label{orbit-fiber}
Let $(\F_1,f_1)$ and $(\F_2,f_2)$ be two marked foliations in $\mr{SL}(\F^\diam)$. The non-marked foliation germs $\F_1$ and $\F_2$ are \Cex-conjugated if and only if there is $\dot\varphi\in\mbf I_{\F^\diam}$ such that $\dot\varphi\star[\F_1,f_1]=[\F_2,f_2]$. In other words, the orbits of $\mbf I_{\F^\diam}$ on $\mr{Mod}([\F^\diam])$ coincide with the fibers of the forgetful map $\mr{Mod}([\F^\diam])\to \mr{SL}(\F)/\!\sim_{\mathcal{C}^{\mr{ex}}}$, where
 $\mr{SL}(\F)$ denotes the set of foliation germs having the same SL-type than $\F$ as defined in the introduction by means of properties (SL1)-(SL3).

\end{prop}
\begin{proof}
Let $\phi:(M_{\F_1},\mc E_{\F_1})\to(M_{\F_2},\mc E_{\F_2})$ be an excellent homeomorphism conjugating the reduced foliations $\F_1^\sharp$ to $\F_2^\sharp$. If we set $\varphi:=f_2^{-1}\circ \phi\circ f_1:\mc E\to\mc E$ then the marked foliations $(\F_1,f_1\circ\varphi^{-1})$ and $ (\F_2,f_2)$ are \Cex-conjugated by $\phi$ and $\dot\varphi\in\mr{Mcg}(\mc E^\diam)$ satisfies the equality $\dot\varphi\star[\F_1,f_1]=[\F_2,f_2]$. It remains to prove that $\dot\varphi\in\mbf I_{\F^\diam}$. As
$(\F_1,f_1\circ\varphi^{-1})$ and $(\F_2,f_2)$ are \Cex-equivalent, they also are $\mr{SL}$-equivalent and we deduce that $(\F_1,f_1\circ\varphi^{-1})\sim_{\mr{SL}}(\F,f)$. On the other hand,
$(\F_1,f_1)\sim_{\mr{SL}}(\F,f)$ implies that $(\F_1,f_1\circ\varphi^{-1})\sim_{\mr{SL}}(\F,f\circ\varphi^{-1})$ thanks to (\ref{SL-phi}). Hence $(\F,f)\sim_{\mr{SL}}(\F,f\circ\varphi^{-1})$ and consequently $\dot\varphi\in\mbf I_{\F^\diam}$.
\end{proof}

\begin{obs}\label{extsl}
When $Q$ is connected, a marked equisingular global family $\un\F_Q^\diam$ belongs to $\mr{SL_Q(\F^\diam)}$ as soon as one of its fibers $\F_Q^\diam(u_0)$  belongs to $ \mr{SL}(\F^\diam)$. Indeed  the Camacho-Sad indices of $\F_{Q}^\diam(u)$ depend continuously on $u$ and they are determined up to $2\pi i\Z$ by the holonomy maps around the singular points. The constancy of $u\mapsto \left[\mc H_D^{\F^\diam_Q(u)}\right]$ follows from the equisingularity of $\un\F_Q$ and the coherence property of the marking.
\end{obs}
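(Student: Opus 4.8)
The plan is to show that the two invariants entering the definition of $\mr{SL}(\F^\diam)$ --- the marked holonomy classes and the Camacho--Sad indices --- are locally constant along $Q$; since $Q$ is connected this makes them constant, and as one fiber already carries the invariants of $\F^\diam$ by hypothesis, every fiber does. More precisely, by Definition~\ref{deftypeSL} the assertion $\un\F_Q^\diam\in\mr{SL}_Q(\F^\diam)$ amounts to two things: that $\un\F_Q^\diam$ is equisingular, which is part of the hypothesis, and that each fiber $\F_Q^\diam(u)$ satisfies $[\mc H_D^{\F_Q^\diam(u)}]=[\mc H_D^{\F^\diam}]$ and $\mr{CS}(\F_Q^\diam(u),D,m)=\mr{CS}(\F^\diam,D,m)$ for every invariant component $D$ of $\mc E$ and every $m\in D$. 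So it suffices to prove that $u\mapsto[\mc H_D^{\F_Q^\diam(u)}]$ and $u\mapsto\mr{CS}(\F_Q^\diam(u),D,m)$ are locally constant on $Q$: by connectedness they are then constant, hence equal to their values at $u_0$, which by the hypothesis $\F_Q^\diam(u_0)\in\mr{SL}(\F^\diam)$ are exactly $[\mc H_D^{\F^\diam}]$ and $\mr{CS}(\F^\diam,D,m)$.

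To establish the local constancy, I would fix $u_1\in Q$ and choose, after shrinking, a neighborhood $W\ni u_1$ carrying an equireduction map $E_{\F_W}$ and a trivializing homeomorphism $\Psi_W$ as in~(\ref{trivdivex}) that simultaneously witnesses the equisingularity of $\un\F_Q$ at $u_1$ and the local coherence property of the marking; by coherence the pre-marking $(f_u)_{u\in W}$ may then be replaced by the equivalent one $(\Psi_u^{-1}\circ f_{u_1})_{u\in W}$. This replacement changes neither invariant: equivalent markings differ by a good isotopy, which fixes $\Sigma$ pointwise --- hence fixes every singular point $m$ at which a Camacho--Sad index is evaluated --- and moves each loop $f_u\circ\gamma$ only by a homotopy inside $D\setminus\mr{Sing}(\F^\sharp)$, leaving the holonomy class, taken up to inner automorphisms, unchanged. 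With this normalization, setting $D_{u_1}:=f_{u_1}(D)\subset\mc E_{\F_W}(u_1)$ and $D_u:=\Psi_u^{-1}(D_{u_1})$, one has $\mc H_D^{\F_Q^\diam(u)}=\mc H_{D_u}\circ(f_{u_1})_\ast$ and $\mc H_D^{\F_Q^\diam(u_1)}=\mc H_{D_{u_1}}\circ(f_{u_1})_\ast$, where $\mc H_{D_u}$ is the holonomy of $\F_W^\sharp(u)$ along $D_u$ viewed as a morphism on $\pi_1(D_{u_1}\setminus\mr{Sing}(\F_W(u_1)))$ via $\Psi_u$, and $(f_{u_1})_\ast$ is the isomorphism induced on fundamental groups. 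Condition~(a) of equisingularity at $u_1$ supplies $g_u\in\mr{Diff}(\C,0)$ with $g_u\circ\mc H_{D_u}\circ g_u^{-1}=\mc H_{D_{u_1}}$, whence $[\mc H_D^{\F_Q^\diam(u)}]=[\mc H_D^{\F_Q^\diam(u_1)}]$ for every $u\in W$. Likewise $\mr{CS}(\F_Q^\diam(u),D,m)=\mr{CS}(\F_W(u),D_u,m_u)$ with $m_u=\Psi_W^{-1}(f_{u_1}(m),u)$, which is constant on $W$ by condition~(b) of equisingularity at $u_1$. (Alternatively, as indicated in the statement, in an equireducible family the Camacho--Sad indices depend holomorphically, hence continuously, on $u$, while they are pinned down modulo $2\pi i\Z$ by the local holonomies around the singular points --- values of $\mc H_{D_u}$, hence locally constant by the previous line --- and a continuous function locally constant modulo the lattice $2\pi i\Z$ is locally constant.) This gives local constancy near every point of $Q$, and the conclusion follows as in the first paragraph.

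The only delicate point is the bookkeeping around the markings: one must check carefully that passing to the coherent pre-marking $(\Psi_u^{-1}\circ f_{u_1})_u$ leaves both invariants untouched --- holonomy classes being defined up to inner automorphisms and depending on loops in $D\setminus\mr{Sing}(\F^\sharp)$ only through their homotopy class, while Camacho--Sad indices are read at points of $\Sigma$, which good isotopies fix --- and that a single neighborhood $W$ with a single $\Psi_W$ can serve at once in the equisingularity conditions and in the coherence property of the marking. Once these compatibilities are arranged, clauses (a) and (b) of equisingularity are precisely the local-constancy statements required, so no deeper obstacle remains.
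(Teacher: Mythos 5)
Your proof is correct and follows essentially the same route as the paper's own justification: local constancy of the marked holonomy classes via condition (a) of equisingularity together with the coherence property of the marking, local constancy of the Camacho--Sad indices, and then connectedness of $Q$. The only (harmless) difference is that you invoke condition (b) of the equisingularity definition directly for the Camacho--Sad indices, whereas the paper argues via their continuity in $u$ and their determination up to $2\pi i\Z$ by the holonomies around the singular points -- an alternative you also mention.
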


 \subsection{Local universality}\label{Suniv} Let us suppose now that   $\F^\diam=(\F,f)$  is a  marked by~$\mc E^\diam$ foliation with $\F$  a finite type generalized curve, on an ambient space $(M_0,m_0)$. Theorem D  in \cite[\textsection 2.6]{MMS} gives a description of the moduli space $\mr{Mod}([\F^\diam])$ as
a pointed set naturally endowed with an abelian group structure given by an exact sequence
\begin{equation}\label{excsequmod1}
0\to \Z^p\stackrel{\alpha}{\to}\C^\tau\stackrel{\Lambda}{\to}\mr{Mod}([\F^\diam])\stackrel{\beta}{\to} \mbf D\to 1\,, 
\end{equation}
where
\begin{enumerate}[(i)]
\item\label{formuletau}  $\tau$  is the dimension of the cohomological space $H^1(\A_\F,\mc T_{\F})$ whose definition is recalled below, see (\ref{suiteH1}).
\item\label{expliciterD}  
$\mbf D$ is a quotient of a finite product of totally disconnected subgroups of  $\mb U(1):=\{|z|=1\}\subset \C$, that according to~\cite{Perez-Marco}  can be uncountable when $\F^\sharp$ possesses a singularity which is non-linearizable and non-resonant, cf. \cite[Example 4, \S8]{MMS}.
\end{enumerate}
\noindent Notice that the subgroup $\alpha(\Z^p)\subset\C^\tau$ might be not discrete, cf. \cite[Example 2, \S8]{MMS}.\\

In order to give the definition of $H^1(\A_\F,\T_\F)$ we first need to introduce the \emph{dual graph} $\A_\F$ (resp. $\A_{\mc E}$) of the exceptional divisor $\mc E_\F$ (resp. $\mc E$). It is the tree with vertex set  $\Ve_{\A_{\F}}$ (resp. $\Ve_{\A_{\mc E}}$) formed by the irreducible components of $\mc E_\F$ (resp. $\mc E$), and edge set $\Ed_{\A_{\F}}$ (resp. $\Ed_{\A_{\mc E}}$) consisting in unordered pairs $\langle D,D'\rangle$ of distinct irreducible components of $\mc E_\F$ (resp. $\mc E$) with $D\cap D'\neq\emptyset$. We also consider the set of \emph{oriented edges} of $\A_{\F}$ (resp. $\A_{\mc E}$)
\[\mc I_{\A_\F}=\{(D,\ge)\in\Ve_{\A_\F}\times\Ed_{\A_\F}\,:\,D\in \ge\}\]
(resp. $\mc I_{\A_{\mc E}}=\{(D,\ge)\in\Ve_{\A_{\mc E}}\times\Ed_{\A_{\mc E}}\,:\,D\in \ge\}$).

Let us denote by $\un{\mc X}_{\F}\subset\un{\mc B}_{\F}$  the sheaves of tangent and \emph{basic}\footnote{i.e. whose flows leave the foliation $\F^\sharp$ invariant.} holomorphic vector fields of $\F^{\sharp}$ on $M_{\F}$, and consider the  vector spaces associated to the vertices $D$ and the edges $\langle D,D'\rangle$ of the dual graph $\A_{\F}$ of $\mc E_\F$:

\[\T_{\F}(D)=\lim\limits_{\stackrel{\longleftarrow}{\tiny D\subset U}}H^0(U,\un{\mc B}_{\F}/\un{\mc X}_{\F})\]
 if $D\subset \mc E_{\F}$ is not a dicritical component of $\F$ and zero otherwise,
\[\T_{\F}(\langle D,D'\rangle)=\lim\limits_{\stackrel{\longleftarrow}{\tiny D\cap D'\subset U}}H^0(U,\un{\mc B}_{\F}/\un{\mc X}_{\F})\] if $D\cap D'$ is not reduced to a nodal singularity of $\F^\sharp$ and zero otherwise. Here $U$  runs over all open sets of $M_{\F}$ containing $D$ or $D\cap D'$.
By definition, $H^1(\A_{\F},\T_{\F})$ is the
 $1$-cohomology vector space $\ker\partial^1/\mr{Im}\,\partial^0$ of the following complex
\begin{equation}\label{suiteH1}\bigoplus_{D\in\Ve_{\A_{\F}}}\T_{\F}(D)\stackrel{\partial^0}{\longrightarrow}\bigoplus_{(D,\langle D,D'\rangle)\in\mc I_{\A_{\F}}}\T_{\F}(\langle D, D'\rangle)\stackrel{\partial^1}{\longrightarrow}\bigoplus_{\langle D,D'\rangle\in\Ed_{\A_{\F}}}\T_{\F}(\langle D,D'\rangle)),
\end{equation}
where 
\[ \partial^0((X_D)_D)=(X_{D'}-X_D)_{(D,\langle D,D'\rangle)}\,, \] 
\[\partial^1((X_{D,\langle D,D'\rangle})_{(D,\langle D,D'\rangle)})=(X_{D,\langle D,D'\rangle}+X_{D',\langle D,D'\rangle})_{\langle D,D'\rangle}.\]
If $\F$ is of finite type then $H^1(\A_{\F},\T_{\F})$ is of finite dimension  by \cite[Theorem~5.15]{MMSII}.

\begin{defin}\label{moduli-map}
We call \emph{moduli map} of a marked global  family $\un{\F}^\diam_Q\in \mr{SL}_Q(\F^\diam)$ the map 
\begin{equation*}\label{modmap}
\mr{mod}_{\un\F^\diam_Q} : Q \to  \mr{Mod}([\F^\diam])\,,
\quad
u\mapsto [ {\F}^\diam_{Q}(u) ]\,.
\end{equation*}
\end{defin}

\noindent We proved in \cite{MMS} that  for any map $\zeta:\mbf D\to \mr{Mod}([\F^\diam])$ such that $\beta\circ \zeta=\mr{id}_{\mbf D}$, there exists a marked equisingular  global family 
\begin{equation*}\label{FUIMRN}
\un\F^{\diam}_{\mbf U}=(\un\F_{\mbf U}, (f_{z,d})_{z,d})\in \mr{SL}_{\mbf U}(\F^\diam)\,,\quad
\un\F_{\mbf U}=
\left(M_0\times {\mbf U},\pi,\theta,\F_{\mbf U}\right)
\,,
\end{equation*}
\[ \mbf U:= \C^\tau\times\mbf D\,,\quad
\pi:M_0\times {\mbf U}\to
{\mbf U}\,, 
\quad 
\pi(m,z,d):=(z,d)\,,
\quad
\theta(z,d):=(m_0,z,d)\,,
\]
where $\mbf D$ is  endowed with the discrete topology, such that if we denote by the dot $\cdot$ the group operation in $\mr{Mod}([\F^\diam])$, we have:
\begin{equation}\label{propLambda} \mr{mod}_{\un\F^{\diam}_{\mbf U}}(z,d)=\Lambda(z)\cdot \zeta(d)\,.
\end{equation}
The goal of this section is to prove that this global family satisfies a local universal property:

\begin{teo}\label{localuniv}
Let $\F^\diam=(\F,f)$ be a marked by $\mc E^\diam$ foliation of finite type which is a generalized curve. Let 
 $\zeta$ be a section of the map $\beta:\mr{Mod}([\F^\diam])\to\mbf D$ in the exact sequence (\ref{excsequmod1}). Then there exists a  marked  global family 
 $\un\F^{\diam}_{\mbf U}=(\un\F_{\mbf U}, (f_{z,d})_{z,d})\in \mr{SL}_{\mbf U}(\F^\diam)$, $\mbf U:=\C^\tau\times \mbf D$,   
 such that
\begin{enumerate}
\item\label{thmmod} the moduli map $\mr{mod}_{\un\F_{\mbf U}^\diam}$ is surjective and relation (\ref{propLambda}) is satisfied,
\item\label{thmlocuniv}  for any point $\tu
\in {\mbf U}$, 
the deformation $(\un\F_{\mbf U,\tu},\iota_{\tu})$ of the foliation $\un\F_{\mbf U}(\tu)$ 
over the germ of manifold $({\mbf U},\tu)$, 
given by the germ 
of $\un\F_{\mbf U}$ at $\tu$ and the canonical  embedding $\iota_\tu:M_0\times\{\tu\}\hookrightarrow M_0\times\mbf U$,
%$\iota_\tu(m):=(m,\tu)$,
is
$\mc C^{\mr{ex}}$-universal.
\end{enumerate}
\end{teo}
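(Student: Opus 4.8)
The plan is to deduce Theorem~\ref{localuniv} from the description of $\mr{Mod}([\F^\diam])$ in \cite{MMS} together with an infinitesimal (Kodaira--Spencer type) criterion for $\mc C^{\mr{ex}}$-universality. Part~(\ref{thmmod}) is essentially a restatement of the construction recalled just before the theorem: one takes the family $\un\F^\diam_{\mbf U}$ built in \cite{MMS} attached to the chosen section $\zeta$, and surjectivity of $\mr{mod}_{\un\F^\diam_{\mbf U}}$ together with relation~(\ref{propLambda}) follows because $\Lambda$ surjects onto $\ker\beta$ and $\zeta$ is a section of $\beta$, so $\Lambda(z)\cdot\zeta(d)$ runs over all of $\mr{Mod}([\F^\diam])$. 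The real content is part~(\ref{thmlocuniv}), and here I would proceed in three stages.

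First, I would fix a base point $\tu=(z_0,d_0)\in\mbf U$ and unwind what $\mc C^{\mr{ex}}$-universality of $(\un\F_{\mbf U,\tu},\iota_\tu)$ means: by Definition~\ref{defdefuniv} and Remark~\ref{rempreliminaires} it depends only on the $\mc C^{\mr{ex}}$-class of the deformation, and it suffices to show that for every equisingular deformation $(\un\G_{P^\point},\delta)$ of $\F_{\mbf U}(\tu)$ over a germ $P^\point=(P,t_0)$ there is a \emph{unique} germ $\lambda:P^\point\to(\mbf U,\tu)$ with $\lambda^\ast(\un\F_{\mbf U,\tu},\iota_\tu)\cong_{\mc C^{\mr{ex}}}(\un\G_{P^\point},\delta)$. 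Since $\mbf D$ carries the discrete topology, any such $\lambda$ must land in the slice $\C^\tau\times\{d_0\}$, so the problem reduces to the smooth factor: one is looking for a holomorphic map germ $(P,t_0)\to(\C^\tau,z_0)$, i.e. an honest finite-dimensional deformation-theoretic statement about the $\tau$-parameter family $\F_{\C^\tau\times\{d_0\}}$ near $z_0$.

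Second, I would invoke the standard deformation-theoretic principle: a deformation over a finite-dimensional smooth base is versal/universal as soon as (a) its Kodaira--Spencer map at the base point is an isomorphism onto the space of infinitesimal deformations, and (b) the relevant obstruction space / automorphism control is trivial, so that the factorizing map is forced and unique. In the present $\mc C^{\mr{ex}}$-setting the role of the tangent space to deformations is played by $H^1(\A_\F,\mc T_\F)$ (the space of dimension $\tau$ appearing in~(\ref{formuletau}) and~(\ref{suiteH1})), and the role of rigidity is played by the fact that $\F$ is a generalized curve of finite type, so the relevant $H^0$ vanishes and excellent conjugacies are suitably rigid. Thus the core task is the explicit computation announced in the introduction: \emph{the Kodaira--Spencer map of $\un\F_{\mbf U}$ at each point $\tu$ is an isomorphism $T_{\tu}\mbf U\cong H^1(\A_{\F_{\mbf U}(\tu)},\mc T_{\F_{\mbf U}(\tu)})$.} I would compute it by differentiating relation~(\ref{propLambda}): the derivative of $z\mapsto\Lambda(z)$ along $\C^\tau$, composed with the identification of $\mr{Mod}([\F^\diam])$ near the base point with $\C^\tau/\alpha(\Z^p)$ coming from~(\ref{excsequmod1}), is by construction the linear isomorphism $\Lambda$, and one checks that this linear map is precisely the Kodaira--Spencer map — this is where one uses that the family $\un\F_{\mbf U}$ was built in \cite{MMS} so that $\mr{mod}$ is $\Lambda$ composed with a linear chart. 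Together with Theorem~\ref{phi*} (which transports universality along excellent conjugacies) this lets one reduce to any convenient representative in the $\mc C^{\mr{ex}}$-class.

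Third, having the Kodaira--Spencer isomorphism, I would produce $\lambda$ and prove its uniqueness. Given $(\un\G_{P^\point},\delta)$, its Kodaira--Spencer / period map $P^\point\to H^1(\A_\F,\mc T_\F)\cong\C^\tau$ composed with the inverse of the isomorphism above yields a candidate $\lambda$; one then checks, using the universality recalled from \cite[Theorem~3.11, Corollary~6.8]{MMSII} (Theorem~\ref{phi*} here) for the \emph{topologically universal} deformation of each fiber, that $\lambda^\ast\un\F_{\mbf U,\tu}$ and $\un\G_{P^\point}$ are $\mc C^{\mr{ex}}$-conjugated, and that any two factorizing maps agree because their Kodaira--Spencer classes agree and the map $\mr{Mod}$ detects the class. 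The main obstacle, as flagged by the authors, is exactly step two: carrying out the Kodaira--Spencer computation for $\un\F_{\mbf U}$ at an \emph{arbitrary} point $\tu$ — not just over the reference foliation $\F$ — and identifying it with the group-theoretic map $\Lambda$ from~(\ref{excsequmod1}); this requires understanding how the cohomology $H^1(\A_{\F_{\mbf U}(\tu)},\mc T_{\F_{\mbf U}(\tu)})$ varies with $\tu$ and matching the cocycle description of the family's first-order variation with the exact sequence~(\ref{excsequmod1}) from \cite{MMS}.
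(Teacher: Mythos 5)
Your overall scaffolding is the same as the paper's: take the family built in \cite{MMS} attached to $\zeta$ (which gives part~(\ref{thmmod}) and relation~(\ref{propLambda}) essentially for free), and reduce part~(\ref{thmlocuniv}) to showing that the Kodaira--Spencer map of $\un\F_{\mbf U}$ at an arbitrary point $\tu$ is an isomorphism onto $H^1(\A_{\F_{\mbf U}(\tu)},\T_{\F_{\mbf U}(\tu)})$, then invoke the cohomological criterion of \Cex-universality (Theorem~\ref{universal-criterion}); your third stage is in fact redundant once that criterion is available, since universality (existence \emph{and} uniqueness of the factorizing germ) is exactly its conclusion. The problem is that the one step carrying all the content is not actually done, and the route you sketch for it cannot work as stated. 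You propose to obtain the Kodaira--Spencer isomorphism by ``differentiating relation~(\ref{propLambda})'', i.e.\ by differentiating $z\mapsto\Lambda(z)\cdot\zeta(d)$ and identifying the result with the Kodaira--Spencer map. But $\mr{Mod}([\F^\diam])$ is only an abstract abelian group (a quotient involving the totally disconnected, possibly uncountable group $\mbf D$); it carries no complex structure, the moduli map is not a holomorphic map into anything, and $\Lambda$ is a group morphism into this set, not a linear map into a vector space. The Kodaira--Spencer map is \emph{defined} via the Lie derivatives of the transition maps $\phi_{D,\langle D,D'\rangle}=\Psi_{D'}\circ\Psi_D^{-1}$ of a good trivializing system (Theorem~\ref{psiD}), so there is nothing to ``differentiate'' on the moduli side; the statement that $\mr{mod}$ is ``$\Lambda$ composed with a linear chart'' presupposes precisely the identification you are trying to prove.

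What the paper actually does at this point, and what is missing from your proposal, is: (i) exploit the explicit gluing construction of $\un\F_{\C^\tau}$ (transition maps $\Phi_{D,\ge}$ inserting the flows $\exp(X_\ge)[z_{\kappa(\ge)}]$ of chosen basic vector fields on the active edges) to produce, at every $\tz\in\C^\tau$, an explicit good trivializing system $(\Upsilon_D)_D$ whose cocycle in suitable charts is $\exp(X_\ge)[z_{\kappa(\ge)}-\tz_{\kappa(\ge)}]$ on active edges and the identity elsewhere; (ii) differentiate \emph{these cocycles} to see that the Kodaira--Spencer map sends $\partial/\partial z_{\kappa(D,\ge)}$ to the class of $X_\ge^{\tz}=(g_D^{\tz}\circ F_{\tz})^{*}X_\ge$, a generator of the one-dimensional space $\T_{\F_{\tz}}(f_\tz(s_\ge))$; and (iii) conclude it is an isomorphism via the basis isomorphism $\delta_{\mc A}^{\F_{\tz}^\diam}$ of Theorem~\ref{geometric-basis}, using Remark~\ref{obsFu} to know that the same set of active edges $\mc A$ works for every fiber, i.e.\ to control how $H^1(\A_{\F_{\tz}},\T_{\F_{\tz}})$ varies with $\tz$. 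You yourself flag this computation at an arbitrary $\tu$ as ``the main obstacle'' but offer no mechanism for it beyond the ill-defined differentiation of the moduli relation; as it stands, the core of the proof is a genuine gap.
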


\begin{dem2}{of Theorem~\ref{localuniv}}
We will see that the marked  global family  $\un\F_{\mbf U}^{\diam}$ introduced in \cite{MMS}, which fulfills property 
 (\ref{thmmod}), also  satisfies the assertion~(\ref{thmlocuniv}). For this we will use the criterion of universality given in  \cite[Theorem~6.7]{MMSII} by showing that for any $\tu\in\mbf U$ the Kodaira-Spencer map  \cite[\S6.2]{MMSII} of the germ  $\un\F_{\mbf U,\tu}$
  is an isomorphism.
In the first step we recall the process of construction of $\un\F_{\mbf U}^{\diam}$ made in \cite[\textsection 10, Step (vii)]{MMS}. 
In the second step we will determine a ``good trivializing system'' 
 for $\un\F_{\mbf U,\tu}$ which will be used in the last step of the proof to compute the  Kodaira-Spencer map of $\un\F_{\mbf U,\tu}$.\\

\textit{-Step 1. } Let us fix $d\in \mbf D$ and a marked foliation $\G^\diam=(\G,g)$ belonging to $\zeta(d)$.  First, let us recall that there are germs of \Cex-homeomorphisms compatible with the markings
\begin{equation}\label{conjDGF}
\psi_D:(M_\G, g(D))\iso(M_\F, f(D))\,,\quad D\in\Ve_{\A_{\mc E}}\,,
\end{equation}
that conjugate $\G^\sharp$ to $\F^\sharp$. The biholomorphism germs
\begin{equation}\label{cocycleG}
\phi_{D,\langle D,D'\rangle}:= \psi_{D'}\circ\psi_D^{-1} : (M_\F,s_{\langle D,D'\rangle})\iso (M_\F,s_{\langle D,D'\rangle})\,,
\end{equation}
with
\[
\{s_{\langle D,D'\rangle}\}:=f(D\cap D')\,,\quad \langle D,D'\rangle\in \Ed_{\A_{\mc E}}\,,
\] 
leave $\F^\sharp$ invariant. 
Thanks to the following lemma
we may also require that $\phi_{D,\langle D,D'\rangle}$ is the identity map when  
$s_{\langle D,D'\rangle}$ is a nodal singular point of $\F^\sharp$ or a singular point of the divisor belonging to a dicritical component. 

\begin{lema}
Let $\F$ be the foliation on $\C^2$ defined by  $z_1dz_2-\alpha z_2 dz_1$ with $\alpha\in\R^+\setminus\mb Q$ (resp. by $dz_1$).  Denote $\pi_j(z_1,z_2)=z_j$, $j=1,2$, and for $c>0$, $K_c=\{|z_2|<c|z_1|^\alpha\}$ (resp. $K_c=\{|z_1|<c\}$).
Let $g^j$, $j=1,2$, be holomorphic automorphisms of $\F$ defined on the polydisk $P=\{|z_1|<1,|z_2|<1\}$. Assume that for $j=1,2$ we have $\pi_j\circ g^j=\pi_j$ (resp. $\pi_1\circ g^j=k^j\circ \pi_1$ for some holomorphic maps $k^j:\mb D=\{|z|<1\}\to\C$).  Then, for every $0<c_1<c_2<1$ there exists a $\mathcal C^0$-automorphism $g$ of $\F$ defined on $P$
 such that
$g=g^1$ on $K_{c_1}$ and $g=g^2$ on $P\setminus K_{c_2}$.
\end{lema}
This result follows from the arguments given in  \cite[\S8.5]{MM3} for the nodal case and \cite[p. 147]{MM4} for the dicritical case. Indeed, it can also be deduced from the proofs of Remarks~3.9 and~3.10 in the arXiv version of \cite{MMSII} which remain valid for non  parametric versions.\\

Using the marking $f:\mc E\to\mc E_\F$  we consider the set
\begin{equation}\label{SF}
S_{\F^\diam}:=\{\star\in\Ve_{\A_\mc E}\cup\Ed_{\A_{\mc E}}:\dim_\C \T_{\F}(f(\star))=1\}.
\end{equation}
We then choose a subset 
\[\mc A''\subset\mc A':= \{\langle D,D'\rangle\in S_{\F^\diam}\,|\, D \text{ or } D'\notin S_{\F^\diam}\}\]
obtained by removing from $\mc A'$ an element in each connected component of $S_{\F^\diam}$ not reduced to a single edge, cf \cite[\S2.6]{MMSII}. Finally we choose for each $\ge\in\mc A''$ one vertex $D\in\ge$. This gives us an orientation for each edge $\ge\in\mc A''$. We consider
\[\mc A:=\{(D,\ge): \ge\in\mc A''\}\subset \mc I_{\A_{\mc E}}\,.\]
Such a set will be called \emph{set of active  oriented  edges for $\F^\diam$}. We denote by $\tau$ the cardinality of $\mc A$ and we fix a bijection 
\begin{equation}\label{kappa}
\kappa:\mc A\to\{1,\ldots,\tau\}.
\end{equation}
For each $(D,\ge)\in\mc A$ we also fix a germ  $X_{\ge}$  at $s_\ge$ of basic and not tangent holomorphic vector field to $\F^\sharp$ on $M_\F$.

Taking into account that $\mbf D$ is discrete, to recall the construction of  the marked global  family $\un\F_{\mbf U}^{\diam}$, it suffices to fix $d\in\mbf D$ and to
describe its restriction
 $\un\F^\diam_{\C^\tau}$  to the connected component $\C^\tau\times\{d\}$ of $\mbf U$.
Denoting by $U_D$, $D\in\Ve_{\A_{\mc E}}$, a neighborhood of $f(D)$ in $M_\F$, the ambient space of $\un\F_{\C^\tau}$ is obtained by   gluing  the  neighborhoods $U_D\times \C^\tau$ of $f(D)\times \C^\tau$ in $M_\F\times \C^\tau$,  using an  appropriate family of biholomorphisms
\[
\mf u:=(\Phi_{D,\ge} )_{D\in \ge\in\Ed_{\A}}\,,
\]
\[\Phi_{D,\ge} : 
\left(U_D \times \C^\tau,\{s_{\ge}\}\times \C^\tau\right)\iso 
\left(U_{D'}\times \C^\tau ,\{s_{\ge}\}\times \C^\tau\right)\,,\quad\text{with } \msf e=\langle D,D'\rangle\,.
\]
More precisely, writing  $(m,t)\mapsto \exp(Z)[t](m)$  the flow at the time $t$ of a vector field $Z$, we set:  
\begin{itemize}
\item $\Phi_{D,\ge}(m,z)=(\phi_{D,\ge}(m),z)$, $z=(z_1,\ldots,z_\tau)$,  if $(D,\ge),(D',\ge) \notin \mc A$,
\item $\Phi_{D, \ge}(m,z)=(\phi_{D,\ge}\,\circ\, \mr{exp}(X_{\ge})[z_{\kappa(\msf e)}](m),z)$, if $(D,\ge)\in \mc A$, 
\item $\Phi_{D, \ge}=\Phi^{-1}_{D', \ge}$ if $(D',\ge) \in \mc A$,
\end{itemize}
where $\phi_{D,\ge}$ are the biholomorphism germs in (\ref{cocycleG}).
We consider the following germ of manifold 
\[\left
(M_{\mf u}, \mc E_{\mf u}\right) :=
\Big( \bigcup_{D\in\Ve_{\A_{\mc E}}} U_D  \times \C^\tau \times \{D\}\Big/ \sim_{\mf u}\;, \; \bigcup_{D\in\Ve_{\A_{\mc E}}}f(D)\times\C^\tau\times\{D\} \Big/ \sim_{\mf u}\Big)\,,
\]  
the equivalence relation $\sim_{\mf u}$ being defined by: 
\[U_D \times \C^\tau\times\{D\} \;\ni\;(m,z, D)
\sim_{\mf u}\; 
(\Phi_{D,\langle D,D'\rangle}(m,z), D')\,\in\;U_{D'} \times \C^\tau\times \{D'\}
\]
when $(m,z)$ belongs to the domain of $\Phi_{D,\langle D,D'\rangle}$.
% and with the single relation $(m,z,D)\sim_{\mf u} (m,z,D)$ otherwise.
As the biholomorphisms  $\Phi_{D,\ge}$ leave invariant the projections $U_D\times \C^\tau\to \C^\tau$ and  the constant family $\F^{\sharp\,\mr{ct}}_{\C^\tau}$, the gluing process provides a holomorphic  submersion $\pi_{\mf u}: M_{\mf u}\to \C^\tau$ and a  foliation tangent to the fibers of  $\pi_{\mf u}$, which we  denote by $\F_{\mf u}$. 

The ambient space of   $\un\F_{\C^\tau}$ is the manifold  over $\C^\tau$, obtained by   contracting  $\mc E_{\mf u}$ to a $\tau$-dimensional manifold $S_{\mf u}$:
\[
E_{\mf u} : (M_{\mf u},\mc E_{\mf u})\longrightarrow  (M^\flat_{\mf u}, S_{\mf u}) \,,
\quad
\pi_{\mf u}^\flat : (M^\flat _{\mf u},S_{\mf u})\longrightarrow \C^\tau\,,\quad
\pi_{\mf u}^\flat\circ E_{\mf u} := \pi_{\mf u}
\,.
\]
 Restricted to $ S_{\mf u}$  the submersion $\pi_{\mf u}^\flat$ is a biholomorphism, we will denote by $\theta_{\mf u}$ its inverse. Finally the foliation   $\F_{\C^\tau}$ constructed in \cite[\S 10, Step (vii)]{MMS} is the direct image $E_{\mf u}(\F_{\mf u})$ and we have an equisingular global family over $\C^\tau$ 
\begin{equation*}
\un\F_{\C^\tau} := \left(M^\flat_{\mf u}, \pi_{\mf u}^\flat,\theta_{\mf u}, \F_{\C^\tau}\right)\,.
\end{equation*}
By a classical property of blow-ups, there is a germ $F$ of  biholomorphism 
 that conjugates  the reduced foliation $\F_{\C^\tau}^{\sharp}$ to $\F_{\mf u}$: 
 \begin{equation}\label{conjprunivblup}
 F:(M_{\un\F_{\C^\tau}}, \mc E_{\F_{\C^\tau}} )\iso (M_{\mf u}, \mc E_{\mf u})\,,
\quad
F(\F_{\C^\tau}^{\sharp})=\F_{\mf u}\,,
 \end{equation}
In order to endow this family with a marking by $\mc E^\diam$ we highlight that we have:   
\[\Phi_{D,\ge}(m,0)=(\phi_{D,\ge}(m),0)\,,\quad D\in \ge\in\Ed_{\A_{\mc E}}\,.
\]
Therefore, according to relations (\ref{cocycleG})  the maps $\psi_D$ introduced in (\ref{conjDGF}) glue as  a germ of \Cex-homeomorphism $\Psi_\G$ which conjugates $\G^\sharp$ to the foliation $\F^\sharp_{\C^\tau}(0)$ obtained by restricting 
 $\F_{\C^\tau}^\sharp$  to the fiber  $M_{\mf u}(0):={\pi_{\mf u}^{-1}(0)}$:
\[
\Psi_{\G}: (M_\G, \mc E_\G)\;
\iso\;( M_{\mf u}(0), \mc E_{\mf u}(0))
\,,
\quad
\Psi_\G(\G^\sharp)=\F^\sharp_{\C^\tau}(0)\,,
\]
with $\mc E_{\mf u}(0):= \mc E_{\mf u}\cap\pi_{\mf u}^{-1}(0)$. The homeomorphism  $\Psi_\G\circ g : \mc E\to  \mc E_{\mf u}(0)$ defines a marking of $\F_{\C^\tau}(0)$, that extends to a marking $(f_{z})_{z}$ of the global family $\un\F_{\C^\tau} $ thanks to property~(\ref{extmarking}) of Remark \ref{prglobalmark}.
Since $S_{\mf u}$ is Stein, up to a biholomorphism over $\C^\tau$, we can assume by classical arguments that $M_{\mf u}^\flat$ is a neighborhood of 
$\{m_0\}\times\C^\tau\subset M_0\times\C^\tau$ and $\pi_{\mf u}^\flat$ is the projection map onto $\C^\tau$.
\\

\textit{-Step 2. }
 Now we also fix $\tilde z\in\C^\tau$ and we consider the germ  $\un\F_{\C^\tau,\tz}$ of $\un\F_{\mbf U}$ at $\tu:=(\tz,d)$ as a deformation of its fiber  $\F_{\C^\tau}(\tz)$. We shall construct a \emph{good trivializing system} for $\un\F_{\C^\tau,\tz}$ in the sense of \cite[Theorem~3.8]{MMSII}, i.e.
a collection $(\Upsilon_{D_{\tz}})_{D_{\tz}}$  of excellent homeomorphism germs
\[\Upsilon_{D_{\tz}}:(M_{\un\F_{\C^\tau,\tz}},D_{\tz})\iso(M_{\F_{\C^\tau}(\tz)}\times\C^\tau,D_{\tz}\times\{\tz\}),\]
where $D_{\tz}$ varies in the set of irreducible components of the exceptional divisor of the reduction of $\F_{\C^\tau}(\tz)$, such that:
\begin{enumerate}[(i)]
\item $\Upsilon_{D_{\tz}}$ is a map over $\C^\tau$ and it is the identity map over $\tz$,
\item $\Upsilon_{D_{\tz}}$ conjugates the foliation $\F^\sharp_{\C^\tau,\tz}$ to the foliation $\F^{\mr{ct}\sharp}_{\C^\tau,\tz}$ obtained after reduction of the constant family $\F^{\mr{ct}}_{\C^\tau,\tz}$,
\item when $D_{\tz}\cap D'_{\tz}$ is either  a nodal singular point or a regular point of $\F_{\C^\tau}(\tz)$, 
the germs of $\Upsilon_{D_{\tz}}$ and $\Upsilon_{D'_{\tz}}$  coincide at $D_{\tz}\cap D'_{\tz}$.
\end{enumerate}
If we restrict the map germ (\ref{conjprunivblup}) to the fiber   over $\tz$, we obtain  a biholomorphism germ $F_\tz$ between the ambient space $M_{\F_{\C^\tau}(\tilde z)}$ of $\F_{\C^\tau}^{\sharp}(\tilde z)$ and the manifold germ $(\pi_{\mf u}^{-1}(\tz), \mc E_{\mf u}\cap \pi_{\mf u}^{-1}(\tz))$. This manifold  is also the manifold germ 
\[
\big(\;M_{\mf u(\tilde z)}, \mc E_{\mf u(\tilde z)}\,\big):=\Big(
\bigcup_{D\in\Ve_{\A_{\mc E}}} U_D\times\{D\} \big/ \sim_{\mf u(\tilde z)},\; 
\bigcup_{D\in\Ve_{\A_{\mc E}}} f(D)\times\{D\} \big/ \sim_{\mf u(\tilde z)}\;\Big)\,,
\]
\noindent defined by the gluing process given by the equivalence relation $\sim_{\mf u(\tilde z)}$ defined by the family:
\[
\mf u(\tilde z):=(\Phi^{\tilde z}_{D,\ge})_{D\in \ge\in\Ed_{\A_{\mc E}}}\,,
\]
\[\Phi^{\tilde z}_{D,\ge} : 
(U_D, s_{\ge})\iso (U_{D'}, s_{\ge})\,,\quad m\mapsto \Phi_{D,\ge}(m,\tilde z)\,,\quad \ge=\langle D,D'\rangle\,.
\]
Clearly $F_\tz$ conjugates $\F_{\C^\tau}^{\sharp}(\tilde z)$ to the foliation 
$\F_{\mf u(\tz)}$ obtained by gluing  $\F^\sharp$  restricted to each $U_D$, i.e.
\[
F_\tz : \big(M_{\F_{\C^\tau}(\tilde z)}, \mc E_{\F_{\C^\tau}(\tilde z)}\big) \iso \big(\;M_{\mf u(\tilde z)}, \mc E_{\mf u(\tilde z)}\,\big)\,,
\quad
F_\tz (\F_{\C^\tau}^{\sharp}(\tilde z))=\F_{\mf u(\tz)}\,.
\]
Let us denote by $\{U_D\times\C^\tau\}\subset M_{\mf u}
$ the image of the canonical embedding $U_D\times \C^\tau\hookrightarrow M_{\mf u}$, and by 
\[g_{D}: \{U_D\times\C^\tau\}\subset M_{\mf u} \longrightarrow U_D\times \C^\tau
\]
the inverse of this embedding. We have the following relations of ``change of charts''
\begin{equation}\label{chcharts}
g_{D'}= \Phi_{D,\langle D,D'\rangle}\circ g_{D}\,.
\end{equation}
Similarly, $\{U_D\}$ denoting the image of the canonical embedding $U_D\hookrightarrow M_{\mf u(\tilde z)}$ and 
\begin{equation*}
g_D^{\tilde z}: \{U_D\}\subset M_{\mf u(\tilde z)}\longrightarrow U_D
\end{equation*}
denoting its inverse, we also have: 
\begin{equation}\label{chchartstz}
g_{D'}^{\tilde z}= \Phi^{\tilde z}_{D,\langle D,D'\rangle}\circ g^{\tilde z}_{D}\,.
\end{equation}

\noindent Notice that  $g_D$ conjugates the foliation $\F_{\mf u}$ restricted to $\{U_D\times \C^\tau\}$ to the constant deformation $(\F^\sharp)_{\C^\tau}^{\mr{ct}}$ of $\F^\sharp$, restricted to $U_D\times \C^\tau$. Similary $g_D^{\tilde z}$ conjugates $\F_{\mf u(\tz)}$ restricted to $\{U_D\}$ to $\F^\sharp$ restricted to $U_D$.  Hence  $g_D^{\tilde z}\times \mr{id}_{\C^\tau}$ conjugates the constant deformation of $\F_{\mf u(\tz)}$ on $\{U_D\}\times \C^\tau$ over $\C^\tau$, denoted by  $(\F_{\mf u(\tz)})_{\C^\tau}^{\mr{ct}}$, to  the constant deformation $(\F^\sharp)_{\C^\tau}^{\mr{ct}}$ of $\F^\sharp$ restricted to $U_D\times \C^\tau$. 
If we write $[U_D\times \C^\tau]:= F^{-1}(\{U_D\times \C^\tau\})$ and $[U_D]:=F_\tz^{-1}(\{U_D\})$, then $F_\tz\times\mr{id}_{\C^\tau}$ conjugates the constant deformation of $\F_{\C^\tau}^{\sharp}(\tz)$ over $\C^\tau$, denoted by $(\F_{\C^\tau}^{\sharp}(\tz))^{\mr{ct}}_{\C^\tau}$, to $(\F_{\mf u(\tz)})_{\C^\tau}^{\mr{ct}}$:
\[
[U_D\times \C^\tau]\stackrel{F}{\iso}\{U_D\times\C^\tau\}\stackrel{g_D}{\iso}U_D\times\C^\tau
\stackrel{(g_D^\tz\times\mr{id}_{\C^\tau})^{-1}}{\stackrel{\sim}{\longrightarrow}}
\{U_D\}\times\C^\tau \stackrel{(F_\tz\times\mr{id}_{\C^\tau})^{-1}}{\stackrel{\sim}{\longrightarrow}}[U_D]\times\C^\tau\,.
\]
\[\phantom{aa}
\F_{\C^\tau}^\sharp \phantom{aa}\mapsto\phantom{aaaa} 
\F_{\mf u}\phantom{aaaa}\mapsto\phantom{a}
(\F^\sharp)_{\C^\tau}^{\mr{ct}}\phantom{aaaa}\mapsto\phantom{aaaa}(\F_{\mf u(\tz)})_{\C^\tau}^{\mr{ct}}
 \phantom{aaaa}\mapsto\phantom{ai} (\F_{\C^\tau}^\sharp(\tz))_{\C^\tau}^{\mr{ct}}
\]
The  homeomorphism 
\[
\Upsilon_D:= (F_\tz\times\mr{id}_{\C^\tau})^{-1}\circ (g_D^\tz\times\mr{id}_{\C^\tau})^{-1} \circ g_D \circ F : [U_D\times \C^\tau]\iso [U_D]\times\C^\tau\,,
\]
is a \Cex-trivialization of $\F^\sharp_{\C^\tau}$ as deformation of $\F_{\C^\tau}^\sharp(\tz)$, i.e. a \Cex-conjugacy from $\F_{\C^\tau}^\sharp$ to 
the constant deformation of $\F^\sharp_{\C^\tau}(\tz)$. The collection $(\Upsilon_D)_{D}$ is a good trivializing system for $\un\F_{\C^\tau,\tz}$, 
after identifying each 
$D\in\Ve_{\A_{\mc E}}$ 
with the irreducible component $D_{\tz}=(g_D^{\tz}\circ F_{\tz})^{-1}(D)\subset [U_D]$ of the exceptional divisor of the reduction of $\F_{\C^\tau}(\tz)$. 

\medskip
\medskip

\textit{-Step 3.} Let us denote here $\F_{\C^\tau}(\tilde z)$ simply by $\F_{\tilde z}$.
To obtain the Kodaira-Spencer map of $\un\F_{\C^\tau,\tz}$ we need first to compute the cocycle 
\[\Upsilon_{D,\ge}:=\Upsilon_{D'}\circ\Upsilon^{-1}_D, \quad \ge=\langle D,D'\rangle\in\Ed_{\A_{\mc E}}.\]
as a germ of biholomorphism $([U_D]\cap[U_{D'}])\times\C^\tau$ at the  point $(D_{\tz}\cap D'_{\tz})\times\{\tz\}$ leaving invariant the constant family $(\F^\sharp_\tz)^\mr{ct}_{\C^\tau,\tz}$.
One easily checks that for each $v\in T_\tz\C^\tau$ the Lie derivative
\[L_v(\mr{pr}_{M_{\F_\tz}}\circ\Upsilon_{D,\ge}):m\mapsto D(\mr{pr}_{M_{\F_\tz}}\circ\Upsilon_{D,\ge})_{(m,\tz)}(0,v)\in T_m M_{\F_\tz}\]
is a well defined and basic vector field for $\F_{\tz}$.
By definition  \cite[\S6.2]{MMSII}, the Kodaira-Spencer map of $\F_{\C^\tau,\tz}$ is
\[\left.\frac{\partial \F_{\C^\tau}}{\partial z}\right|_{z=\tz}:T_{\tz}\C^\tau\to H^1(\A_{\F_{\tz}},\T_{\F_{\tz}}),\quad v\mapsto [(L_v(\mr{pr}_{M_{\F_{\tz}}}\circ\Upsilon_{D,\ge})_{D_\tz,\ge_\tz}],
\]
where  $(D_\tz,\ge_\tz)\in\mc I_{\A_{\F_\tz}}$, $(D,\ge)=f_\tz^{-1}(D_\tz,\ge_\tz)\in\mc I_{\A_{\mc E}}$ and $f_\tz:\mc E\to\mc E_{\F_\tz}$ is the marking of $\F_\tz$ introduced at the end of Step 1.
To see that it is an isomorphism we work in
 the following ``chart'' 
\[\chi:=
(g_{D}^\tz\times\mr{id}_{\C^\tau})\circ (F_\tz\times \mr{id}_{\C^\tau}): [U_D]\times\C^\tau \iso U_D\times\C^\tau\,.
\]
We get
\[
\wt \Upsilon_{D,\ge}:=\chi\circ \Upsilon_{D,\ge}
\circ\chi^{-1}=(g_D^\tz\times \mr{id}_{\C^\tau})\circ ((g_{D'}^{\tz})^{-1}\times \mr{id}_{\C^\tau})\circ g_{D'}\circ g_D^{-1}\,
\]
and, thanks to (\ref{chcharts}) and (\ref{chchartstz}) we obtain
\[\wt \Upsilon_{D,\ge}=(\Phi_{D,\ge}^{\tz}\times \mr{id}_{\C^\tau})^{-1}\circ \Phi_{D,\ge}\,.
\]
Using the explicit  expressions of $\Phi_{D,\ge}$ and $\Phi_{D,\ge}^\tz$, we finally have, writing $z=(z_1,\ldots,z_\tau)\in\C^\tau$ and $\tilde z=(\tilde z_1,\ldots,\tilde z_\tau)$,
\begin{enumerate}[(i)]
\item $\wt \Upsilon_{D,\ge}(m,z)=(\exp(X_{\ge})[z_{\kappa(\ge)}-\tz_{\kappa(\ge)}](m),z)$, if $(D,\ge)\in \mc A$,
\item $\wt \Upsilon_{D,\ge}=\wt \Upsilon_{D',\ge}^{-1}$, if $(D',\ge)\in \mc A$,
\item $\wt \Upsilon_{D,\ge}(m,z)=
(m,z)$, if $(D,\ge),(D',\ge)\notin \mc A$.
\end{enumerate}
% Let us denote here $\F_{\C^\tau}(\tilde z)$ simply by $\F_{\tilde z}$ and consider the marked foliation $\F_{\tilde z}^\diam=(\F_{\tilde z },f_{\tilde z})$.
We deduce the following partial derivatives:
\[
\tilde Y^k_{f(D),f(\ge)}:= \left.\frac{\partial\mr{pr}_{U_D}\circ \wt\Upsilon_{D,\ge}}{\partial z_k}\right|_{z=\tz}
 =\left\{
 \begin{array}{rl}
 X_{\ge}& \hbox{if }\; (D,\ge)\in\mc A \hbox{ and } k=\kappa((D,\ge)),\\
-X_{\ge}&\hbox{if } \; (D',\ge)\in\mc A \hbox{ and } k=\kappa((D',\ge)),\\
0 & \text{otherwise.}
 \end{array}
 \right.
\]
It follows from  Remark 5.10, Proposition 5.12 and Theorem 2.15 of \cite{MMSII} that the cohomological classes $[\tilde Y^1],\ldots,[\tilde Y^\tau]$, associated by the bijection (\ref{kappa}) to the active oriented edges $\mc A$ of $\F^\diam$, form a basis of the vector space $H^1(\A_{\F},\T_{\F})$.

Since $\Upsilon_{D,\ge}=[(g_D^{\tz}\circ F_{\tz})\times\mr{id}_{\C^\tau}]^{-1}\circ\wt\Upsilon_{D,\ge}\circ[(g_D^{\tz}\circ F_{\tz})\times\mr{id}_{\C^\tau}]$, we deduce that
\[Y^k_{f_\tz(D),f_\tz(\ge)}:=\left.\frac{\partial\mr{pr}_{M_{\F_{\tz}}}\circ\Upsilon_{D,\ge}}{\partial z_k}\right|_{z=\tz}=\left\{
 \begin{array}{rl}
X_{\ge}^{\tz}& \hbox{if }\; (D,\ge)\in\mc A \hbox{ and } k=\kappa((D,\ge)),\\
-X_{\ge}^{\tz}&\hbox{if } \; (D',\ge)\in\mc A \hbox{ and } k=\kappa((D',\ge)),\\
0 & \text{otherwise,}
 \end{array}
 \right.\]
 where $X_\ge^{\tz}=(g_D^{\tz}\circ F_{\tz})^*X_\ge$.
 Since $g_D^{\tz}\circ F_{\tz}:[U_D]\to U_D$ is a biholomorphism conjugating $\F_{\tz}^\sharp$ and $\F^\sharp$ we deduce that 
$S_{\F_\tz^\diam}$, defined as in (\ref{SF}), coincides with $S_{\F^\diam}
$ and consequently $\mc A$ is a set of active oriented edges for $\F_\tz^\diam$.
 Hence $[Y^1],\ldots,[Y^\tau]$ form a basis of $H^1(\A_{\F_\tz},\T_{\F_\tz})$ and the Kodaira-Spencer map $\frac{\partial\F_{\C^\tau}}{\partial z}\Big|_{z=\tz}$ is an isomorphism.
By the criterion of universality given in \cite[Theorem 6.7]{MMSII} we conclude that the deformation $(\un\F_{\mbf U,\tilde u},\iota_{\tilde u})$ of $\F_{\tilde u}$ is \Cex-universal.
This achieves the proof of Theorem~\ref{localuniv}.
\end{dem2}

\section{Factorization properties of the locally universal family}\label{Sfact}

\subsection{Local factorization property}
We will now prove  that the global  family  $\un\F_{\mbf U}^{\diam}$ of Theorem~\ref{localuniv} is complete in a similar meaning to that given by  Kodaira-Spencer in \cite{KS} in the context of complex manifolds:

\begin{teo}\label{factolocale}
Let $\F^\diam=(\F,f)$ be a marked by $\mc E^\diam$ foliation of finite type which is a generalized curve and
let $\un\G_P^\diam$  be a  marked global family in $\mr{SL}_{P}(\F^\diam)$. Let us consider  $t_0\in P$ 
 and  $\tu\in{\mbf U}$ such that the marked foliation $\G^\diam_P(t_0)$  is  $\mc C^{\mr{ex}}$-conjugated to the fiber $\F^{\diam}_{\mbf U}(\tu)$ of the marked global family $\un\F^{\diam}_{\mbf U}$ given by Theorem \ref{localuniv}. Then there exists a unique germ of holomorphic map $\lambda:(P,t_0)\to ({\mbf U},\tu)$ such that the germ of $\un\G^\diam_{P}$ at $t_0$ is $\mc C^{\mr{ex}}$-conjugated,  as marked family, to the germ at $t_0$  of $\lambda^\ast\un\F^{\diam}_{\mbf U}$.
%  In particular,
%\[
%\xymatrix{
%(\mbf U,\tilde u)\ar[rrrr]^{\mr{mod}_{\un\F_{\mbf U}^{\diam}}}&&&&\mr{Mod}([\F^\diam])\\ &&&&\\
%(P,t_0)\ar[uurrrr]_{\mr{mod}_{\un\G_P^\diam}}\ar@/_0.5pc/[uu]_{\lambda}
%%\ar@/_2pc/[uu]_{\mu}
%&&&&
%}
%\]
\end{teo}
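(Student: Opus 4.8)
The plan is to reduce Theorem~\ref{factolocale} to the $\mc C^{\mr{ex}}$-universality of the deformation $(\un\F_{\mbf U,\tu},\iota_\tu)$ obtained in Theorem~\ref{localuniv}(\ref{thmlocuniv}), transporting the family $\un\G^\diam_P$ to a deformation of the foliation $\F_{\mbf U}(\tu)$ by means of Theorem~\ref{phi*}, and then recovering the marked structure from the coherence of markings over a connected parameter germ.

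First I would fix a $\mc C^{\mr{ex}}$-conjugacy of marked foliations $\phi:\G^\diam_P(t_0)\iso\F^\diam_{\mbf U}(\tu)$ (note that $\G_P(t_0)$ and $\F_{\mbf U}(\tu)$, having the same marked SL-type as $\F$, are again finite type generalized curves, so Theorem~\ref{phi*} applies to $\phi$) and view the germ $\un\G_{P,t_0}$ as an equisingular deformation $(\un\G_{P,t_0},\delta)$ of $\G:=\G_P(t_0)$, with $\delta$ the inclusion of the special fiber. Applying Theorem~\ref{phi*} to the excellent conjugacy $\phi^{-1}:\F_{\mbf U}(\tu)\to\G$ and to this deformation produces an equisingular deformation $(\un{\mc H}_{P,t_0},\delta')$ of $\F_{\mbf U}(\tu)$ over $(P,t_0)$ together with an excellent conjugacy of families $\Phi':\un{\mc H}_{P,t_0}\iso\un\G_{P,t_0}$ with $\Phi'\circ\delta'=\delta\circ\phi^{-1}$. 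Since $(\un\F_{\mbf U,\tu},\iota_\tu)$ is $\mc C^{\mr{ex}}$-universal, there is a unique germ of holomorphic map $\lambda:(P,t_0)\to(\mbf U,\tu)$ with $\lambda(t_0)=\tu$ and an excellent conjugacy of deformations $\Psi$ from $\lambda^\ast(\un\F_{\mbf U,\tu},\iota_\tu)$ to $(\un{\mc H}_{P,t_0},\delta')$, i.e. $\Psi\circ\lambda^\ast\iota_\tu=\delta'$. Then I would form $\Phi'\circ\Psi:\lambda^\ast\un\F_{\mbf U,\tu}\iso\un\G_{P,t_0}$, an excellent conjugacy of families. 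Read through the identifications $\lambda^\ast\iota_\tu$ and $\delta$ of the special fibers with the ambient spaces of $\F_{\mbf U}(\tu)$ and $\G$, its restriction to the fiber over $t_0$ equals $\phi^{-1}$, since $(\Phi'\circ\Psi)\circ\lambda^\ast\iota_\tu=\Phi'\circ\delta'=\delta\circ\phi^{-1}$. As $\phi$ is a conjugacy of \emph{marked} foliations, transporting the marking of $\un\G^\diam_P$ through $(\Phi'\circ\Psi)^{-1}$ (markings being carried to markings by conjugacies of equireducible families) yields a marking of $\lambda^\ast\un\F_{\mbf U}$ that coincides at $t_0$, up to a good isotopy, with the pullback marking; by the connectedness of $(P,t_0)$ and Remark~\ref{prglobalmark}(\ref{remfamslmark}) these two markings of $\lambda^\ast\un\F_{\mbf U}$ are equivalent. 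Hence $\Phi'\circ\Psi$ is a $\mc C^{\mr{ex}}$-conjugacy of marked families, which proves existence.

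For uniqueness, if $\lambda_1,\lambda_2:(P,t_0)\to(\mbf U,\tu)$ both satisfy the conclusion, then $\lambda_1^\ast\un\F^\diam_{\mbf U}$ and $\lambda_2^\ast\un\F^\diam_{\mbf U}$ have $\mc C^{\mr{ex}}$-conjugated germs at $t_0$ as marked families; by Theorem~\ref{equideffam} such a conjugacy is automatically compatible with the embeddings $\lambda_i^\ast\iota_\tu$, so $\lambda_1^\ast(\un\F_{\mbf U,\tu},\iota_\tu)$ and $\lambda_2^\ast(\un\F_{\mbf U,\tu},\iota_\tu)$ are $\mc C^{\mr{ex}}$-conjugated deformations of $\F_{\mbf U}(\tu)$; the uniqueness clause in the $\mc C^{\mr{ex}}$-universality of $(\un\F_{\mbf U,\tu},\iota_\tu)$ (Definition~\ref{defdefuniv}) then forces $\lambda_1=\lambda_2$.

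The main obstacle is exactly this passage between two bookkeeping frameworks — the ``deformation with a distinguished embedding'' formalism, in which universality is phrased, and the ``marked family'' formalism of the statement: one must move deformations along the genuinely non-trivial conjugacy $\phi$ (the fibers $\G_P(t_0)$ and $\F_{\mbf U}(\tu)$ need not literally coincide) via Theorem~\ref{phi*}, upgrade a conjugacy of families to one of marked families through the extension/coherence property of markings over connected parameter germs, and invoke Theorem~\ref{equideffam} to know that a marked conjugacy of germs of families is automatically a conjugacy of the underlying deformations. Once these reductions are in place, the remainder is a routine diagram chase over the special fiber.
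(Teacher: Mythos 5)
Your proposal is correct, and its existence half is essentially the paper's own argument run in the mirror direction: the paper applies Theorem~\ref{phi*} to $\phi$ in order to transport the universal deformation $(\un\F_{\mbf U,\tu},\iota_\tu)$ into a \Cex-universal deformation of $\G_P(t_0)$ and invokes universality there, while you apply it to $\phi^{-1}$ to transport $(\un\G_{P,t_0},\delta)$ into a deformation of $\F_{\mbf U}(\tu)$ and invoke the universality of $(\un\F_{\mbf U,\tu},\iota_\tu)$ directly; in both cases the compatibility with the markings is recovered exactly as you do it, from the fact that $\phi$ respects the markings together with Remark~\ref{prglobalmark}. Where you genuinely diverge is the uniqueness of $\lambda$. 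The paper gets it from Lemma~\ref{liftmodP}: two holomorphic liftings of the moduli map of $\un\G_P^\diam$ through $\mr{mod}_{\un\F^{\diam}_{\mbf U}}$ that agree at one point coincide --- an elementary argument using only the exact sequence (\ref{excsequmod1}), relation (\ref{propLambda}) and a covering of $P$ by the closed analytic sets $\{\lambda-\mu=\alpha(N)\}$. You instead upgrade a \Cex-conjugacy of marked germs of families to a conjugacy of the underlying deformations via Theorem~\ref{equideffam} and then use the uniqueness clause of Definition~\ref{defdefuniv}. This is legitimate: Theorem~\ref{equideffam} rests on Theorem~\ref{teoextensionequiv} and Lemma~\ref{sigmaid}, none of which depend on Theorem~\ref{factolocale}, so the forward reference is not circular; but it is heavier (it drags in the representability machinery behind Lemma~\ref{sigmaid}), and it proves less. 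Lemma~\ref{liftmodP} yields uniqueness under the weaker hypothesis that the two maps merely induce the same moduli map, i.e.\ fiberwise \Cex-equivalence, which is exactly the stronger uniqueness exploited later in Theorem~\ref{310}~(\ref{unicityfactmap}) and in the proofs of Theorems~\ref{inivfaible} and~\ref{teo91}, whereas your argument only gives uniqueness among maps $\lambda$ realizing a conjugacy of marked families. One detail you should make explicit: to apply Theorem~\ref{equideffam} you must identify the pullback markings of $\lambda_i^{\ast}\un\F^{\diam}_{\mbf U}$ with the markings induced by $(\lambda_i^{\ast}\iota_{\tu})^{\sharp}\circ f_{\tu}$, which holds by Remark~\ref{prglobalmark}~(\ref{remfamslmark}) because they agree at $t_0$ up to a good isotopy.
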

\begin{dem} Let  $\phi$ be an \Cex-homeomorphism such that    
\[\phi(\G^\diam_P(t_0))=\F^{\diam}_{\mbf U}(\tu)\,.
\] 
We will  denote by  $(\un\G_{P,t_0},{\delta})$  the deformation  of ${\G}_P(t_0)$ over the germ of manifold $(P,t_0)$ defined by the germ of $\un\G_{P}$ at $t_0$ and by the embedding $\delta$ given by the inclusion map of the ambient space of $\G_{P}(t_0)$ in that of $\G_P$. 
According to  Theorem~\ref{localuniv} and Theorem~\ref{phi*}
 the deformation $(\un\F_{\mbf U,\tu},{\iota_\tu})$ is \Cex-universal  and any deformation  
\[
(\un\G_{\mbf U,\tu},\kappa)\in\phi^*([\un\F_{\mbf U,\tu},{\iota_\tu}])\;\in\;\;\mr{Def}_{\G_P(t_0)}^{(\mbf U,\tu)}
\]
is a $\mc C^{\mr{ex}}$-universal deformation of  $\G_{P}(t_0)$.
There exist a holomorphic map germ $\lambda :(P,t_0)\to( \mbf U,\tu)$ and a $\mc C^{\mr{ex}}$-conjugacy $\Phi_{P,t_0}$ from the deformation $(\un\G_{P,t_0},\delta)$ to $\lambda^\ast(\un\G_{\mbf U,\tu},\kappa)$.
By definition of $\phi^*$, the associated  families   $\un\G_{\mbf U,\tu}$ and  $\un{\F}_{\mbf U,\tu}$ are $\mc C^{\mr{ex}}$-conjugated  as germs of families over $(\mbf U,\tu)$,
by a $\mc C^{\mr{ex}}$-homeomorphism $\Phi_{\mbf U,\tu}$ which is equal to $\phi$ over $\tu$.
Since $\phi$ is compatible with the markings of $\G^\diam_P(t_0)$ and $\F_{\mbf U}^\diam(\tu)$, it follows from Remark~\ref{prglobalmark} that 
the homeomorphism 
$\lambda^\ast \Phi_{\mbf U,\tu}\circ \Phi_{P,t_0}$ that conjugates the associated family  
$\un\G_{P,t_0}$ to  $\lambda^\ast \un\F_{\mbf U,\tu}$, is 
compatible with the markings of the  families
$\un\G_{P,t_0}^\diam$ and $\lambda^\ast\un\F_{\mbf U,\tu}^{\diam}$. 
The uniqueness of $\lambda$ results from the following lemma 
because local $\mc C^{\mr{ex}}$-conjugacy between $\un\G_{P,t_0}^\diam$ and $\lambda^\ast\un\F_{\mbf U,\tu}^{\diam}$ implies  $\mc C^{\mr{ex}}$-conjugacy of the fibers $\G^\diam_P(t)$ and $\F^\diam_{\mbf U}(\lambda(t))$  and consequently the equality of the modular maps $\mr{mod}_{\un\G_{P,t_0}^\diam}$ and $\mr{mod}_{\lambda^*\un\F^\diam_{\mbf U,\tilde u}}=\mr{mod}_{\un\F_{\mbf U,\tilde u}^\diam}\circ\lambda$.
\end{dem}

\begin{lema}\label{liftmodP} 
Let $\un\G^\diam_P$ be a marked global  family in $\mr{SL}_P(\F^\diam)$ over a connected complex manifold $P$.   
Two holomorphic liftings $\lambda, \mu:P\to \mbf U$  of the moduli map of $\un\G_{P}^\diam$ through the moduli map of $\un\F_{\mbf U}^{\diam}$,
\begin{equation}\label{lifting}
{\mr{mod}_{\un\F_{\mbf U}^{\diam}}}\circ\lambda={\mr{mod}_{\un\F_{\mbf U}^{\diam}}}\circ\mu=\mr{mod}_{\un\G_P^\diam}\,,
\end{equation}
\[
\xymatrix{
\mbf U\ar[rrrr]^{\mr{mod}_{\un\F_{\mbf U}^{\diam}}}&&&&\mr{Mod}([\F^\diam])\\ &&&&\\
P\ar[uurrrr]_{\mr{mod}_{\un\G_P^\diam}}\ar@/_0.5pc/[uu]_{\lambda}\ar@/_2pc/[uu]_{\mu}&&&&
}
\]
coincide as soon as they take the same value at some  point $t_0\in P$. 
\end{lema}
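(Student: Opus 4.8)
The plan is to reduce the uniqueness statement to a rigidity property of the family $\un\F_{\mbf U}^\diam$ coming from the explicit description of $\mbf U=\C^\tau\times\mbf D$ and of the moduli map via the exact sequence (\ref{excsequmod1}). First I would observe that, by (\ref{propLambda}), the moduli map $\mr{mod}_{\un\F_{\mbf U}^\diam}$ sends $(z,d)$ to $\Lambda(z)\cdot\zeta(d)$, and composing with $\beta$ yields $\beta\circ\mr{mod}_{\un\F_{\mbf U}^\diam}(z,d)=d$ since $\beta\circ\Lambda=1$ and $\beta\circ\zeta=\mr{id}_{\mbf D}$. Hence the two liftings $\lambda=(\lambda_1,\lambda_2)$ and $\mu=(\mu_1,\mu_2)$ have $\mbf D$-components determined by $\beta\circ\mr{mod}_{\un\G_P^\diam}$, so $\lambda_2\equiv\mu_2$ on $P$. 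It remains to show that the $\C^\tau$-components agree. Since $\mbf D$ is discrete and $P$ connected, $\lambda_2=\mu_2$ is a constant $d$, and writing $\zeta(d)=:c_0$ the hypothesis (\ref{lifting}) becomes $\Lambda(\lambda_1(t))\cdot c_0=\Lambda(\mu_1(t))\cdot c_0$ for all $t$, i.e. $\Lambda(\lambda_1(t))=\Lambda(\mu_1(t))$ in $\mr{Mod}([\F^\diam])$ because the group operation is invertible. By exactness of (\ref{excsequmod1}), $\Lambda(\lambda_1(t)-\mu_1(t))=0$ means $\lambda_1(t)-\mu_1(t)\in\alpha(\Z^p)$ for every $t\in P$.

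The core of the argument is then a continuity/discreteness step: the map $t\mapsto \lambda_1(t)-\mu_1(t)$ is holomorphic, hence continuous, from the connected space $P$ into $\C^\tau$, and it takes values in the discrete subgroup $\alpha(\Z^p)\subset\C^\tau$ (discreteness of $\alpha(\Z^p)$ is part of the structure of the exact sequence, $\alpha$ being the period map of a lattice as recalled in \cite{MMS}). A continuous map from a connected space to a discrete set is constant, so $\lambda_1-\mu_1$ is constant on $P$; since $\lambda(t_0)=\mu(t_0)$ by hypothesis, that constant is $0$, whence $\lambda_1\equiv\mu_1$ and therefore $\lambda\equiv\mu$.

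I expect the only genuine subtlety to be the justification that $\alpha(\Z^p)$ is a genuinely \emph{discrete} (equivalently, closed and discrete) subgroup of $\C^\tau$, rather than just an abstract injection of $\Z^p$; this is exactly the content of the algebraic description of $\mr{Mod}([\F^\diam])$ in \cite{MMS} (the sequence (\ref{excsequmod1}) realizes $\Lambda$ as the quotient $\C^\tau\to\C^\tau/\alpha(\Z^p)$ composed with an inclusion, so $\alpha(\Z^p)$ is the kernel lattice). Granting that input, everything else is the elementary topological observation above. One should also note that $\lambda_1$ and $\mu_1$ are a priori only defined as \emph{germs} or on $P$ depending on context; since the statement of Lemma~\ref{liftmodP} posits global holomorphic maps $\lambda,\mu:P\to\mbf U$ on the connected manifold $P$, the connectedness hypothesis is used precisely once, in the final ``constant on a connected set'' step, and this is where it is essential — on a disconnected $P$ the two liftings could differ by different elements of $\alpha(\Z^p)$ on different components.
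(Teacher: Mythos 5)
Your reduction is fine and matches the paper up to the last step: the $\mbf D$-component of both liftings is forced (and constant, $\mbf D$ being discrete and $P$ connected), cancellation in the group $\mr{Mod}([\F^\diam])$ gives $\Lambda(\lambda_1(t))=\Lambda(\mu_1(t))$, and exactness of (\ref{excsequmod1}) gives $\lambda_1(t)-\mu_1(t)\in\alpha(\Z^p)$ for all $t$. The gap is exactly the point you flag and then wave away: nothing in the paper (nor in the statement of the exact sequence (\ref{excsequmod1})) says that $\alpha(\Z^p)$ is a \emph{discrete} subgroup of $\C^\tau$. The sequence is purely algebraic; $\mr{Mod}([\F^\diam])$ carries no topology here, so saying that ``$\Lambda$ is the quotient by the kernel, hence the kernel is a lattice'' is circular --- being the kernel of an abstract group morphism of rank $p$ does not make $\alpha(\Z^p)$ closed and discrete in $\C^\tau$ (it is not even clear that $p\le 2\tau$). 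Since your whole conclusion rests on ``continuous map from a connected space into a discrete set is constant'', the argument as written does not go through without an external input that the paper never provides; tellingly, the authors structure their own proof, and the analogous step in the proof of Theorem~\ref{conjug}, precisely so as to avoid any discreteness assumption.

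The paper's fix only uses \emph{countability} of $\Z^p$: for each $N\in\Z^p$ the set $K_N=\{t\in P\,;\,\lambda(t)-\mu(t)=\alpha(N)\}$ is a closed analytic subset of $P$ (a global holomorphic equation), and $P=\bigcup_{N\in\Z^p}K_N$; a connected manifold cannot be a countable union of proper closed analytic subsets (Baire category), so $K_{N_0}=P$ for some $N_0$, and evaluating at $t_0$ together with injectivity of $\alpha$ gives $N_0=0$, i.e.\ $\lambda=\mu$. If you replace your ``discrete subgroup, hence locally constant'' step by this countability argument, your proof becomes correct and essentially identical to the paper's; if instead you insist on the discreteness of $\alpha(\Z^p)$, you must prove it from the construction in \cite{MMS}, and that is a substantive claim, not a formal consequence of (\ref{excsequmod1}).
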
 
\begin{dem} 
Let $d\in\mbf D$ be the image of $[\G_P^\diam(t_0)]$ by the morphism $\beta$ in (\ref{excsequmod1}). 
We write $\lambda=(\lambda_1,\lambda_2)$ and $\mu=(\mu_1,\mu_2)$ with $\lambda_1,\mu_1:P\to\C^\tau$ and $\lambda_2,\mu_2:P\to\mbf D$.
Since $\lambda,\mu$ are holomorphic, $P$ is connected and $\mbf D$ is totally discontinuous we deduce that $\lambda_2$ and $\mu_2$ are constant equal to $d$.
It follows from relations (\ref{propLambda}) and (\ref{lifting}) that for any $t\in P$ we have $\Lambda(\lambda_1(t))\cdot \zeta(d)=\Lambda(\mu_1(t)) \cdot \zeta(d)$.
According to the exact sequence (\ref{excsequmod1}), there is $N_t\in \Z^p$ such that $\lambda_1(t)-\mu_1(t)=\alpha(N_t)$. 
The following sets
\[
K_N:=\{t\in P\;;\; \alpha(N_t)=\alpha(N)\}\,,
\quad
N\in\Z^p\,,
\]
are closed analytic subsets of $P$ given by the global equations $\lambda_1(t)-\mu_1(t)=\alpha(N)$. All of them cannot be proper subsets of $P$, because $P=\cup_{N\in\Z^p}K_N$. Therefore there exists $N_0\in\Z^p$ such that $\lambda_1(t)-\mu_1(t)=\alpha(N_0)$ for any $t\in P$. As $\lambda(t_0)=\mu(t_0)$, we have $\alpha(N_0)=0$, which ends the proof.
\end{dem}

\begin{cor}\label{BetaLambdacCt} 
Let $\F^\diam$ be a marked by $\mc E^\diam$ foliation of finite type which is a generalized curve.
For any marked  global family $\un\G_P^\diam\in \mr{SL}_{P}(\F^\diam)$ over a connected manifold~$P$ the map $\beta\circ \mr{mod}_{\un\G_{P}^\diam}:P\to \mbf D$ is constant, where $\beta : \mr{Mod}([\F^\diam])\to \mbf D$ still denotes the last group morphism in the exact sequence (\ref{excsequmod1}).
\end{cor}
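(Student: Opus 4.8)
The plan is to prove that $\beta\circ\mr{mod}_{\un\G_P^\diam}$ is locally constant on $P$; since $\mbf D$ carries the discrete topology and $P$ is connected, local constancy is the same as constancy. So fix an arbitrary point $t_0\in P$ and work in a neighbourhood of it.

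First I would produce a point of $\mbf U$ over which to factorize. Since $\un\G_P^\diam\in\mr{SL}_P(\F^\diam)$, the fiber $\G_P^\diam(t_0)$ lies in $\mr{SL}(\F^\diam)$, so $\mr{mod}_{\un\G_P^\diam}(t_0)=[\G_P^\diam(t_0)]$ is a well-defined element of $\mr{Mod}([\F^\diam])$. By the surjectivity of the moduli map of $\un\F_{\mbf U}^\diam$ (Theorem~\ref{localuniv}, part~(\ref{thmmod})) there is $\tu\in\mbf U$ with $\mr{mod}_{\un\F_{\mbf U}^\diam}(\tu)=\mr{mod}_{\un\G_P^\diam}(t_0)$, i.e. $\F_{\mbf U}^\diam(\tu)$ is $\mc C^{\mr{ex}}$-conjugated to $\G_P^\diam(t_0)$ as marked foliations. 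Now Theorem~\ref{factolocale} applies and yields a germ of holomorphic map $\lambda:(P,t_0)\to(\mbf U,\tu)$ such that the germ of $\un\G_P^\diam$ at $t_0$ is $\mc C^{\mr{ex}}$-conjugated, as a marked family, to the germ at $t_0$ of $\lambda^\ast\un\F_{\mbf U}^\diam$. Restricting such a conjugacy to fibers, $\G_P^\diam(t)$ is $\mc C^{\mr{ex}}$-conjugated to $(\lambda^\ast\un\F_{\mbf U}^\diam)(t)=\F_{\mbf U}^\diam(\lambda(t))$ for every $t$ near $t_0$, hence
\[
\mr{mod}_{\un\G_P^\diam}(t)=\mr{mod}_{\un\F_{\mbf U}^\diam}(\lambda(t))
\]
in a neighbourhood of $t_0$; in other words $\lambda$ is a holomorphic local lift of $\mr{mod}_{\un\G_P^\diam}$ through $\mr{mod}_{\un\F_{\mbf U}^\diam}$ near $t_0$.

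Next I would identify $\beta\circ\mr{mod}_{\un\F_{\mbf U}^\diam}$. Writing $u=(z,d)\in\C^\tau\times\mbf D$, relation~(\ref{propLambda}) gives $\mr{mod}_{\un\F_{\mbf U}^\diam}(z,d)=\Lambda(z)\cdot\zeta(d)$. Applying the group morphism $\beta$ and using, from the exact sequence~(\ref{excsequmod1}), that $\mr{Im}(\Lambda)=\ker\beta$ and that $\zeta$ is a section of $\beta$, one gets $\beta\bigl(\mr{mod}_{\un\F_{\mbf U}^\diam}(z,d)\bigr)=\beta(\Lambda(z))\cdot\beta(\zeta(d))=d$. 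Thus $\beta\circ\mr{mod}_{\un\F_{\mbf U}^\diam}$ is just the projection $\mr{pr}_{\mbf D}:\mbf U\to\mbf D$. Combining with the previous paragraph, $\beta\circ\mr{mod}_{\un\G_P^\diam}=\mr{pr}_{\mbf D}\circ\lambda$ on a neighbourhood of $t_0$; since $\lambda$ is continuous and $\mbf D$ is discrete, this map is constant near $t_0$. As $t_0\in P$ was arbitrary, $\beta\circ\mr{mod}_{\un\G_P^\diam}$ is locally constant, hence constant because $P$ is connected.

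There is no genuine obstacle here: the statement is a formal consequence of the local factorization Theorem~\ref{factolocale} together with the algebraic structure~(\ref{excsequmod1}) of $\mr{Mod}([\F^\diam])$. The only point requiring a little care is the identification $\beta\circ\mr{mod}_{\un\F_{\mbf U}^\diam}=\mr{pr}_{\mbf D}$, which rests on exactness of~(\ref{excsequmod1}) and on $\zeta$ being a section of $\beta$; once that is in place, discreteness of $\mbf D$ and continuity of $\lambda$ finish the argument.
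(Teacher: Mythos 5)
Your proposal is correct and follows essentially the same route as the paper's proof: pick $\tu\in\mbf U$ over $[\G_P^\diam(t_0)]$ via surjectivity of $\mr{mod}_{\un\F_{\mbf U}^\diam}$, apply Theorem~\ref{factolocale} to get a local holomorphic lift $\lambda$, and use relation~(\ref{propLambda}) together with the exact sequence~(\ref{excsequmod1}) to see that the moduli map stays in $\beta^{-1}(d)$ near $t_0$, hence $\beta\circ\mr{mod}_{\un\G_P^\diam}$ is locally constant and, by connectedness of $P$, constant. Your explicit remark that $\beta\circ\mr{mod}_{\un\F_{\mbf U}^\diam}=\mr{pr}_{\mbf D}$ and the appeal to discreteness of $\mbf D$ are just a slightly more spelled-out version of the paper's computation $\Lambda(\C^\tau)\cdot\zeta(d)=\beta^{-1}(d)$.
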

\begin{dem}
It suffices to prove that the map $\beta\circ \mr{mod}_{\un\F_{\mbf U}^{\diam}}$ is locally constant. Let $t_0$ be  a point in $P$. There is $\tu\in \mbf U$ such that $[\F^{\,\diam}_{\mbf U}(\tu)]=[\G^\diam_P(t_0)]\in \mr{Mod}([\F^\diam])$. Theorem~\ref{factolocale} provides  a holomorphic map germ $\lambda=(\lambda_1,\lambda_2) : (P,t_0)\to (\mbf U, \tu)=(\C^\tau\times\mbf D,(\tilde z,d))$ such that, according to (\ref{propLambda}),  for $t\in P$ close to $t_0$, we have:
\[
\mr{mod}_{\un{\G}_P^\diam}(t)=\mr{mod}_{\un\F_{\mbf U}^{\diam}}(\lambda(t))=\Lambda(\lambda_1(t))\cdot \zeta(\lambda_2(t))
\;\in\;\Lambda(\C^\tau)\cdot\zeta(d)=\beta^{-1}(d)\,,
\]
using that $\lambda_2:P\to\mbf D$ is constant equal to $d$ as we have already remarked in the proof of Lemma~\ref{liftmodP}.
\end{dem}

\subsection{Global factorization property}
We are interested now in factorizing up to $\mc C^{\mr{ex}}$-conjugacy marked  global families through the marked global family $\un\F_{\mbf U}^{\diam}$ provided by Theorem~\ref{localuniv}.  
If $\un\F_Q=(M,\pi,\theta,\F_Q)$ is a global family of foliations over $Q$ for which there exists $\lambda:Q\to\mbf U$ such that $\un\F_Q=\lambda^*\un\F_{\mbf U}$, then the germ of the ambient space
$M$ along $\theta(Q)$ is biholomorphic to the product $(M(u_0),\theta(u_0))\times Q$. 
To avoid obstructions to such factorizations on the ambient space we consider a weaker conjugacy relation.
\begin{defin}\label{locexcong}
Two  equisingular (resp. marked equisingular) global families over a manifold $Q$ are \emph{locally \Cex-conjugated}  
if their germs at any point of $Q$ are \Cex-conjugated  as families (resp. as marked families, see Section~\ref{sec-equi-global}).
\end{defin}
The object of this section is to prove the following theorem of factorization up to local \Cex-conjugacy. 
\begin{teo}\label{inivfaible}
Let $\F^\diam$ be a marked by $\mc E^\diam$ foliation of finite type which is a generalized curve and let $\un\F^{\diam}_{\mbf U}$  be the marked equisingular global family given by Theorem~\ref{localuniv}.
 Let $P$ be a connected manifold satisfying  $H_1(P,\mb Z)=0$ and let $\un\G_{P}^\diam$ be a global family in $\mr{SL}_P(\F^\diam)$.  Then for any $t_0\in P$ and $(\tz,d)\in {\mbf U}$ such that the marked foliations $\G^\diam_P(t_0)$ and $\F_{\mbf U}^{\diam}(\tz,d)$ are \Cex-conjugated, there exists a unique holomorphic map $\lambda:P\to {\mbf U}$ satisfying  $\lambda(t_0)=(\tz,d)$, such that the marked global families $\un\G_P^\diam$ and $\lambda^\ast\un\F^{\diam}_{\mbf U}$ are locally \Cex-conjugated.
\end{teo}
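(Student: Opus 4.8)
The plan is to glue the local factorizations furnished by Theorem~\ref{factolocale} into a single global holomorphic map $\lambda:P\to\mbf U$, the gluing being unobstructed precisely because $H_1(P,\mb Z)=0$.

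First I would produce the local pieces. Since the moduli map $\mr{mod}_{\un\F^{\diam}_{\mbf U}}$ is surjective (Theorem~\ref{localuniv}(\ref{thmmod})) and every fiber $\G^\diam_P(t)$ belongs to $\mr{SL}(\F^\diam)$, for each $t\in P$ there is $\tu_t\in\mbf U$ such that $\G^\diam_P(t)$ and $\F^{\diam}_{\mbf U}(\tu_t)$ are $\mc C^{\mr{ex}}$-conjugated as marked foliations; Theorem~\ref{factolocale} then gives a holomorphic germ $\lambda_t:(P,t)\to(\mbf U,\tu_t)$, which I extend to a connected open neighbourhood $V_t$ of $t$ so that $\un\G^\diam_P|_{V_t}$ is $\mc C^{\mr{ex}}$-conjugated, as a marked family, to $\lambda_t^\ast\un\F^{\diam}_{\mbf U}|_{V_t}$; at a fixed index $t_0$ I choose the germ with $\lambda_{t_0}(t_0)=(\tz,d)$. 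Conjugate marked families have equal moduli maps, hence $\mr{mod}_{\un\F^{\diam}_{\mbf U}}\circ\lambda_t=\mr{mod}_{\un\G^\diam_P}$ on each $V_t$.

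Next I would compare these on overlaps. On $V_t\cap V_s$ both $\lambda_t$ and $\lambda_s$ lift $\mr{mod}_{\un\G^\diam_P}$ through $\mr{mod}_{\un\F^{\diam}_{\mbf U}}$; by Corollary~\ref{BetaLambdacCt} their $\mbf D$-components both equal the constant $d$, and then (\ref{excsequmod1}) together with (\ref{propLambda}) forces $\Lambda(\lambda_t-\lambda_s)\equiv 0$, i.e.\ $\lambda_t-\lambda_s$ takes values in $\alpha(\Z^p)$. A holomorphic $\C^\tau$-valued map with countable image is locally constant, so through $\alpha$ the family $c_{ts}:=\lambda_t-\lambda_s$ defines a \v{C}ech $1$-cocycle of the cover $\{V_t\}$ with values in the constant sheaf $\underline{\Z^p}$. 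Its class lies in $H^1(P,\underline{\Z^p})\cong H^1(P,\mb Z)^{p}$, which vanishes by the universal coefficient theorem since $H_1(P,\mb Z)=0$; hence $(c_{ts})$ is a coboundary $c_t-c_s$ with each $c_t\in\alpha(\Z^p)$ constant on $V_t$. Replacing each $\lambda_t$ by $\lambda_t-c_t$ gives maps that agree on the overlaps and glue to a holomorphic $\lambda:P\to\mbf U$; the $0$-cochain $(c_t)$ being determined up to a global constant in $\Z^p$ and $P$ being connected, I may normalise so that $\lambda(t_0)=(\tz,d)$.

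The step I expect to be the main obstacle is verifying that this corrected $\lambda$ still does the job, i.e.\ that translating a chart by a period $n\in\alpha(\Z^p)$ does not change the family up to local $\mc C^{\mr{ex}}$-conjugacy. Put $\sigma_n(z,d'):=(z-n,d')$; since $n\in\ker\Lambda$, relation (\ref{propLambda}) gives $\mr{mod}_{\un\F^{\diam}_{\mbf U}}\circ\sigma_n=\mr{mod}_{\un\F^{\diam}_{\mbf U}}$, so $\F^{\diam}_{\mbf U}(\sigma_n(\tu'))$ and $\F^{\diam}_{\mbf U}(\tu')$ are $\mc C^{\mr{ex}}$-conjugated marked foliations for every $\tu'\in\mbf U$. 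Applying Theorem~\ref{factolocale} to $\sigma_n^\ast\un\F^{\diam}_{\mbf U}\in\mr{SL}_{\mbf U}(\F^\diam)$ at a point $\tu'$ with target again $\tu'$, the resulting lifting is a self-lift of $\mr{mod}_{\un\F^{\diam}_{\mbf U}}$ fixing $\tu'$, hence the identity by the germ version of Lemma~\ref{liftmodP}; therefore $\sigma_n^\ast\un\F^{\diam}_{\mbf U}$ is locally $\mc C^{\mr{ex}}$-conjugated, as a marked family, to $\un\F^{\diam}_{\mbf U}$. Since $\lambda|_{V_t}=\sigma_{c_t}\circ\lambda_t$, it follows that $\lambda^\ast\un\F^{\diam}_{\mbf U}|_{V_t}=\lambda_t^\ast\bigl(\sigma_{c_t}^\ast\un\F^{\diam}_{\mbf U}\bigr)|_{V_t}$ is locally $\mc C^{\mr{ex}}$-conjugated to $\lambda_t^\ast\un\F^{\diam}_{\mbf U}|_{V_t}$ (pull-back preserving local $\mc C^{\mr{ex}}$-conjugacy), hence to $\un\G^\diam_P|_{V_t}$, so $\lambda^\ast\un\F^{\diam}_{\mbf U}$ and $\un\G^\diam_P$ are locally $\mc C^{\mr{ex}}$-conjugated. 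Uniqueness is then immediate: two maps $\lambda,\lambda'$ with $\lambda(t_0)=\lambda'(t_0)=(\tz,d)$ satisfying the conclusion are holomorphic liftings of $\mr{mod}_{\un\G^\diam_P}$ through $\mr{mod}_{\un\F^{\diam}_{\mbf U}}$ agreeing at $t_0$, whence $\lambda=\lambda'$ by Lemma~\ref{liftmodP}. Besides this translation invariance, the other structural point is recognising the gluing obstruction as a class in $H^1(P,\underline{\Z^p})$, which is exactly what makes $H_1(P,\mb Z)=0$ the right hypothesis.
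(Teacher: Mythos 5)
Your argument is correct, and its overall skeleton is the same as the paper's: produce local factorizations from Theorem~\ref{factolocale}, observe that on overlaps the differences of the liftings lie in $\alpha(\Z^p)=\ker\Lambda$ (via (\ref{excsequmod1}), (\ref{propLambda}) and Corollary~\ref{BetaLambdacCt}), kill the resulting \v{C}ech $1$-cocycle using $H^1(P,\Z^p)=0$ (which is where $H_1(P,\Z)=0$ enters), glue, and get uniqueness from Lemma~\ref{liftmodP}. The one place where you genuinely diverge is the step you correctly single out as the crux: showing that translating by a period $n\in\alpha(\Z^p)$ does not change the pulled-back family up to local \Cex-conjugacy. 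The paper isolates this as a separate lemma about $(\mu,d)^\ast\un\F_{\mbf U}^{\diam}$ versus $(\alpha(N)+\mu,d)^\ast\un\F_{\mbf U}^{\diam}$ and proves it by going back to the deformation-theoretic machinery: both germs of $\un\F_{\mbf U}^{\diam}$ at $(z_0,d)$ and at $(\alpha(N)+z_0,d)$ are \Cex-universal (Theorem~\ref{localuniv}), so Theorem~\ref{phi*}, Remark~\ref{rempreliminaires} and uniqueness of \Cex-universal deformations identify them up to \Cex-conjugacy. You instead apply Theorem~\ref{factolocale} to $\sigma_n^\ast\un\F^{\diam}_{\mbf U}$ at an arbitrary base point with target the same point, and use Lemma~\ref{liftmodP} to force the factorizing germ to be the identity, concluding $\sigma_n^\ast\un\F^{\diam}_{\mbf U}\sim_{\mathrm{loc}}\un\F^{\diam}_{\mbf U}$ and then pulling back by $\lambda_t$; this is legitimate (both ingredients are established before this theorem, so there is no circularity) and buys a purely formal derivation from completeness plus uniqueness of liftings, without re-opening the universal-deformation argument, whereas the paper's lemma makes the translation-invariance explicit and self-contained at the level of deformations. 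Your sheaf-theoretic handling of the cocycle (locally constant $\alpha(\Z^p)$-valued differences) versus the paper's pointwise choice of $N_{ij}$ on connected overlaps, and your normalisation of $\lambda(t_0)$ through the coboundary rather than after gluing, are only cosmetic differences.
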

\begin{dem}
{\it - Step 1: Construction of $\lambda$. } According to Corollary \ref{BetaLambdacCt},  $\mr{mod}_{\un\G^\diam_{P}}$ takes values in $\beta^{-1}(d)$. Thus for any $t\in P$, there exist  $z_t\in\C^\tau$ and a \Cex-homeomorphism $\phi_t$ such that $\phi_t(\G_{P}(t))= \F_{\mbf U}^{\diam}(z_t,d)$. Let us denote by $\un\G^\diam_{P,\,t}$ the germ of $\un \G^\diam_P$ at $t$. According to Theorem~\ref{factolocale}  there exists a holomorphic map germ $\lambda_t: (P,t)\to (\mbf U,(z_t,d))$ such that   $\un\G^\diam_{P,t}$ and $\lambda_t^\ast\un\F_{\mbf U}^{\diam}$ are \Cex-conjugated, as germs of families. Therefore there exist an open covering $(V_i)_{i\in I}$, $I \subset \N$, of $P$ and holomorphic maps $\lambda_i:V_i\to \C^\tau$ such that the restriction  of $\un \G^\diam_{P}$ to $V_i$ are \Cex-conjugated to $(\lambda_i,d)^\ast\un\F_{\mbf U}^{\diam}$. Thus we have
\begin{equation*}\label{modlambdai}
\mr{mod}_{\un\F_{\mbf U}^{\diam}} \circ (\lambda_i,d)=\mr{mod}_{\un\G^\diam_{P}}|_{V_i}\,.
\end{equation*}
We can also require that this covering is locally finite and that the open sets $V_i$ and $V_i\cap V_j$, $i,j\in I$,  are connected. When $V_i\cap V_j$ is non empty, the restrictions of $(\lambda_i,d)$ and $(\lambda_j,d)$ to this open set are two factorizations of the moduli map  of $\un\G_P^\diam$ through $\mr{mod}_{\un\F_{\mbf U}^{\diam}}$. Fixing a point $t_{ij}$ in $ V_i\cap V_j$, we have:    
\[
[\F_{\mbf U}^{\diam}(\lambda_i(t_{ij}),d)]=
[\F_{\mbf U}^{\diam}(\lambda_j(t_{ij}),d)]=
[\G_{P}^\diam(t_{ij})]\,.
\]
The relation (\ref{propLambda}) gives 
$\Lambda(\lambda_i(t_{ij})) \cdot \zeta(d) = 
\Lambda(\lambda_j(t_{ij})) \cdot \zeta(d)$;  thus $(\lambda_j(t_{ij})-\lambda_i(t_{ij}))$ belongs to the kernel of $\Lambda$ and there exist $N_{ij}\in \Z^p$ such that $\lambda_j(t_{ij})-\lambda_i(t_{ij})=\alpha(N_{ij})$.  As by assumption we have: $H_1(P,\Z)=0$, the \v{C}ech cohomology group $H^1(P,\Z^p)$ is trivial and there exist $N_i\in\Z^p$, ${i\in I}$, such that $N_j-N_i=N_{ij}$ as soon as $V_i\cap V_j\neq \emptyset$. Notice that the maps $(\alpha(N_i)+\lambda_i,d):V_i\to\mbf U$ are again liftings of $\mr{mod}_{\un\G_{P}^\diam}|_{V_i}$ through $\mr{mod}_{\un\F_{\mbf U}^{\diam}}$. Indeed we have:

\begin{align*}
\mr{mod}_{\un\F_{\mbf U}^{\diam}} \circ (\alpha(N_i)+\lambda_i,d)=&
\Lambda(\alpha(N_i)+\lambda_i) \cdot \zeta(d)\\
=&
\Lambda(\lambda_i) \cdot \zeta(d)
=\mr{mod}_{\un\F_{\mbf U}^{\diam}} \circ (\lambda_i,d) = \mr{mod}_{\un\G_P^\diam}|_{V_i}\,.
\end{align*}
Since $\alpha(N_i)+\lambda_i(t_{ij})=\alpha(N_j)+\lambda_j(t_{ij})$, 
thanks to Lemma~\ref{liftmodP} and the connectedness of $V_i\cap V_j$, the maps $\alpha(N_i)+\lambda_i$, $i\in I$, glue as a global holomorphic map
\begin{equation*}\label{globliftmodP}
\lambda:P\longrightarrow \C^\tau\times \{d\}\subset \mbf U\,,
\quad 
 \mr{mod}_{\un\F_{\mbf U}^{\diam}} \circ \lambda=\mr{mod}_{\un\G^\diam_{P}}\,.
\end{equation*}
{\it Step 2: Properties of $\lambda$. } First we notice that  for any   $N\in\Z^p$ we also have the equality
$ \mr{mod}_{\un\F_{\mbf U}^{\diam}} \circ (\alpha(N)+\mr{pr}_{\C^\tau}\circ\lambda,d)=\mr{mod}_{\un\G^\diam_{P}}
$. Consequently we can assume that  $\lambda(t_0)=(\tz,d)$. On the other hand,  the global families    $\un\G^\diam_P|_{V_i}$ and $(\lambda_i,d)^\ast\un\F_{\mbf U}^{\diam}$  being  \Cex-conjugated,  the local \Cex-conjugacy between $\un\G_P^\diam$ and $\lambda^\ast\un\F_{\mbf U}^{\diam}$ results from the lemma below.
\end{dem}

\begin{lema}
If $\mu:(Q, u_0)\to (\C^\tau,z_0)$ is a holomorphic map germ and $N\in\Z^p$, then the germs at $u_0$ of the marked  families $(\mu,d)^\ast\un\F_{\mbf U}^{\diam}$ and $(\alpha(N)+\mu,d)^\ast\un\F_{\mbf U}^{\diam}$ are \Cex-conjugated.
\end{lema}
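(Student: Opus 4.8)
The plan is to reduce the statement to a property of the germ of $\un\F_{\mbf U}^{\diam}$ alone and then to deduce it from the local factorization theorem together with the rigidity of liftings of the moduli map. Write $z_0:=\mu(u_0)$, let $T:\mbf U\to\mbf U$ be the translation $(z,e)\mapsto(z+\alpha(N),e)$ — a biholomorphism fixing the discrete factor $\mbf D$ — and set $v_0:=(z_0,d)$, $v_1:=T(v_0)=(z_0+\alpha(N),d)$. Since $(\alpha(N)+\mu,d)=T\circ(\mu,d)$, we have $(\alpha(N)+\mu,d)^{\ast}\un\F_{\mbf U}^{\diam}=(\mu,d)^{\ast}\bigl(T^{\ast}\un\F_{\mbf U}^{\diam}\bigr)$, and because pull-back of marked equisingular families respects $\mc C^{\mr{ex}}$-conjugacy (cf.\ \S\ref{sec-equi-global}), it is enough to prove that the germ of $\un\F_{\mbf U}^{\diam}$ at $v_0$ is $\mc C^{\mr{ex}}$-conjugated, as a marked family, to $T^{\ast}$ of the germ of $\un\F_{\mbf U}^{\diam}$ at $v_1$.

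The first genuine input is the exact sequence (\ref{excsequmod1}): it gives $\alpha(N)\in\ker\Lambda$, so by (\ref{propLambda})
\[
\mr{mod}_{\un\F_{\mbf U}^{\diam}}(v_1)=\Lambda(z_0+\alpha(N))\cdot\zeta(d)=\Lambda(z_0)\cdot\zeta(d)=\mr{mod}_{\un\F_{\mbf U}^{\diam}}(v_0),
\]
whence the marked foliations $\F_{\mbf U}^{\diam}(v_0)$ and $\F_{\mbf U}^{\diam}(v_1)$ are $\mc C^{\mr{ex}}$-conjugated. I would then apply Theorem~\ref{factolocale} to a small connected neighbourhood $P\subset\mbf U$ of $v_0$, viewed as the marked global family $\un\F_{\mbf U}^{\diam}|_P\in\mr{SL}_P(\F^{\diam})$, with base point $v_0$ and target point $v_1$: this yields a unique germ of holomorphic map $\lambda:(\mbf U,v_0)\to(\mbf U,v_1)$ such that the germ of $\un\F_{\mbf U}^{\diam}$ at $v_0$ is $\mc C^{\mr{ex}}$-conjugated, as a marked family, to $\lambda^{\ast}$ of the germ of $\un\F_{\mbf U}^{\diam}$ at $v_1$.

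It then remains to identify $\lambda$ with the germ of $T$. After shrinking $P$ so that $\lambda$ is defined on $P$, both $\lambda$ and $T|_P$ are holomorphic liftings through $\mr{mod}_{\un\F_{\mbf U}^{\diam}}$ of the moduli map of $\un\F_{\mbf U}^{\diam}|_P$: for $T|_P$ this is exactly the computation of the previous paragraph, while for $\lambda$ it follows because a $\mc C^{\mr{ex}}$-conjugacy of marked families forces equality of their moduli maps. Since $\lambda(v_0)=v_1=T(v_0)$ and $P$ is connected, Lemma~\ref{liftmodP} gives $\lambda=T$ on $P$, hence as germs at $v_0$. Combined with the reduction of the first paragraph this finishes the proof.

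I do not expect a real obstacle: all the geometric content — matching moduli classes, and a factorizing germ — is already supplied by the exact sequence (\ref{excsequmod1}), by Theorem~\ref{factolocale}, and by Lemma~\ref{liftmodP}; the argument is a purely formal assembly. The only points to handle with a little care are the bookkeeping that pull-back of \emph{marked} families preserves $\mc C^{\mr{ex}}$-conjugacy and that such a conjugacy induces equality of moduli maps (both immediate from the definitions of \S\ref{sec-equi-global} and \S\ref{marquages}), and the remark that since $\mbf D$ is discrete the germ $(\mbf U,v_0)$ lies entirely in the $\C^{\tau}$-direction, so that $\lambda$ automatically has constant $\mbf D$-component — which is what makes Lemma~\ref{liftmodP} directly applicable. (Alternatively, the conjugacy could be written down by hand in the gluing model of Step~1 of the proof of Theorem~\ref{localuniv}, using flows of the basic vector fields $X_{\ge}$, but the route above avoids that computation.)
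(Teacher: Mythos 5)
Your argument is correct, and there is no circularity: it relies only on Theorem~\ref{factolocale} and Lemma~\ref{liftmodP}, both established before this lemma and independent of it. It is, however, assembled differently from the paper's proof. After the common first reduction (writing $(\alpha(N)+\mu,d)^{\ast}\un\F_{\mbf U}^{\diam}=(\mu,d)^{\ast}(\Delta^{\ast}\un\F_{\mbf U}^{\diam})$ with $\Delta=T$ and using that pull-back preserves \Cex-conjugacy), the paper works directly with deformations: it picks a \Cex-conjugacy $\phi$ between the two fibers, transports the germ of the family at $(z_0,d)$ by the map $\phi^{\ast}$ of Theorem~\ref{phi*}, notes via Theorem~\ref{localuniv} and Remark~\ref{rempreliminaires} that both this transported deformation and $\Delta^{\ast}\un\F^{\diam}_{\mbf U,(\alpha(N)+z_0,d)}$ are \Cex-universal deformations of the same foliation over the same pointed base, and concludes by uniqueness of universal deformations. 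You instead invoke the already-proved completeness statement (Theorem~\ref{factolocale}) to get some factorizing germ $\lambda$ with $\lambda(v_0)=v_1$, and then pin it down as the translation $T$ by observing that both $\lambda$ and $T$ lift the moduli map of $\un\F^{\diam}_{\mbf U}|_P$ and agree at $v_0$, so Lemma~\ref{liftmodP} forces $\lambda=T$ (your remark that the $\mbf D$-component of $\lambda$ is locally constant, $\mbf D$ being discrete, is exactly the point needed there). Since Theorem~\ref{factolocale} is itself a consequence of universality, the two proofs rest on the same foundation, but your route trades the paper's explicit manipulation of $\phi^{\ast}$ and uniqueness of universal deformations for the rigidity of liftings of the moduli map; this is a clean and slightly more economical assembly, at the small cost of the bookkeeping you already flag (pull-back markings and the fact that a marked \Cex-conjugacy of germs of families equalizes moduli maps on a small connected representative), all of which is immediate from Sections~\ref{sec-equi-global} and~\ref{marquages}.
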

\begin{dem}
Let us denote by $\un\F_{\mbf U,(z,d)}^{\diam}$ the germ of $\un\F_{\mbf U}^{\diam}$ at $(z,d)$ considered as a deformation of the foliation $\F^{\diam}_{\mbf U}(z,d)$, the embedding map being the inclusion $M_0\times\{(z,d)\}\hookrightarrow M_0\times \mbf U$. 

As $(\alpha(N)+\mu,d)^\ast\un\F_{\mbf U}^{\diam}=(\mu,d)^\ast(\Delta^\ast\un\F_{\mbf U}^{\diam})$, with 
\[\Delta:(\mbf U, (z_0,d))\longrightarrow (\mbf U, (\alpha(N)+z_0,d))\,,
\quad
\Delta(z,d):= (\alpha(N)+z,d)\,,
\]
it suffices to see that  $\un\F_{\mbf U,(z_0,d)}^{\diam}$ is \Cex-conjugated  to $\Delta^\ast(\un\F_{\mbf U, (\alpha(N)+z_0,d)}^{\diam})$ as a family. To lighten the text let us  write 
\[
\F^\diam_0:=\F_{\mbf U}^{\diam}(z_0,d)
\quad\hbox{and}\quad
\F^\diam_N:=\F_{\mbf U}^{\diam}(\alpha(N)+z_0,d)=(\Delta^*\F_{\mbf U}^{\diam})(z_0,d)\,.
\]
There exists a \Cex-homeomorphism $\phi$ such that  $\phi(\F^\diam_N)=\F^\diam_0$. 
Let $\un{\mc K}^\diam_{\mbf U,(z_0,d)}$ be a deformation of $\F^\diam_N$ over the germ of manifold $(\mbf U,(z_0,d))$ that belongs  to the class $\phi^*([\un\F_{\mbf U,(z_0,d)}^{\diam}])$. 
According to Theorem~\ref{localuniv} the deformations $\un\F_{\mbf U,(z_0,d)}^{\diam}$ and   $\un\F_{\mbf U, (\alpha(N)+z_0,d)}^{\diam}$ are   \Cex-universal; it follows from Theorem~\ref{phi*}
 and Remark~\ref{rempreliminaires} that the deformation $\un{\mc K}_{\mbf U,(z_0,d)}^\diam$ of $\F_N^\diam$ is \Cex-universal. On the other hand, since $\Delta$ is a biholomorphism, the deformation $\Delta^\ast(\un\F_{\mbf U, (\alpha(N)+z_0,d)}^{\diam})$ of $\F_N^\diam$ is also \Cex-universal over the same parameter space $(\mbf U,(z_0,d))$, again by Remark~\ref{rempreliminaires}.
 By uniqueness of \Cex-universal deformations, $\un{\mc K}^\diam_{\mbf U,(z_0,d)}$  and $\Delta^\ast(\un\F_{\mbf U, (\alpha(N)+z_0,d)}^{\diam})$
 are \Cex-conjugated deformations of $\F_N^\diam$. We end the proof by noting that by definition of $\phi^*([\un\F_{\mbf U,(z_0,d)}^{\diam}])$, the families $\un{\mc K}^\diam_{\mbf U,(z_0,d)}$ and $\un\F_{\mbf U,(z_0,d)}^{\diam}$ are \Cex-conjugated.
\end{dem}

Now, we consider a weaker notion of conjugacy requiring the equality of moduli maps, in other words, the \Cex-conjugacy fiber by fiber for each value of the parameter.

\begin{teo} \label{310}
Let $\F^\diam$ be a marked by $\mc E^\diam$ foliation of finite type which is a generalized curve and let $\un\F^{\diam}_{\mbf U}$  be the marked equisingular global family given by Theorem~\ref{localuniv}.
%If $P$ is a connected manifold such that $H_1(P,\Z)=0$,  then: 
 \begin{enumerate}%[(1)]
 \item\label{unicityfactmap} 
 Assume that $P$ is a connected manifold such that $H_1(P,\Z)=0$, then the moduli map of any marked global family $\un\G^\diam_P\in\mr{SL}_P(\F^\diam)$ factorizes through the moduli map of $\un\F_{\mbf U}^{\diam}$. 
More precisely, 
for any $t_0\in P$ and $(\tz,d)\in\mbf U$ such that $\G^\diam_P(t_0)$ is \Cex-conjugated to $\F_{\mbf U}^{\diam}(\tz,d)$, 
there is a unique holomorphic map $\lambda:P\to\mbf U$ satisfying $\mr{mod}_{\un\G^\diam_P}= \mr{mod}_{\un\F^{\diam}_{\mbf U}}\circ \lambda$ and $\lambda(t_0)=(\tilde z,d)$.
\item\label{redondancemcg} The non-marked foliations $\F_{\mbf U}(u_1)$ and $\F_{\mbf U}(u_2)$ are \Cex-conjugated if and only if there is $\dot\varphi\in\mbf I_{\F^\diam}$ such that $\dot\varphi\star[\F_{\mbf U}^\diam(u_1)]=[\F_{\mbf U}^\diam(u_2)]$, see Definition~\ref{IF}. 
 \end{enumerate}
\end{teo}
Notice that a priori the uniqueness of $\lambda$  stated in assertion (\ref{unicityfactmap}), is a stronger property than that given by Theorem~\ref{inivfaible} because the property that the marked global families $\un\G_P^\diam$ and $\lambda^*\un\F_{\mbf U}^\diam$ are locally $\mc C^{\mr{ex}}$-conjugated implies that $\mr{mod}_{\un\G^\diam_P}= \mr{mod}_{\un\F^{\diam}_{\mbf U}}\circ \lambda$. In Theorem~\ref{teo91} we will see the equivalence of these two properties for a family of generalized curves of finite type.

\begin{obs}\label{identification}
Thanks to the exact sequence  (\ref{excsequmod1}) we have an action $\ast$ of $\Z^p$ on $\mbf U=\C^\tau\times\mbf D$ given by $N\ast(z,d)=(z+\alpha(N),d)$.
For each section $\zeta:\mbf D\to\mr{Mod}([\F^\diam])$ we have a family $\un\F_{\mbf U}^\diam$ over $\mbf U$ and
 an identification of $\mbf U/\Z^p=(\C^\tau/\alpha(\Z^p))\times\mbf D$ with $ \mr{Mod}([\F^\diam])$ by the map $([z],d)\mapsto \Lambda(z)\cdot \zeta(d)$. 
Using this identification we obtain an action that we still denote by $\star$ 
of the discrete group $\mbf I_{\F^\diam}$ on the quotient $\mbf U/\Z^p$ such that 
 $\F_{\mbf U}(u_1)$ and $\F_{\mbf U}(u_2)$ are \Cex-conjugated if and only if there exists $\dot\varphi\in\mbf I_{\F^\diam}$ such that $\dot\varphi\star(\Z^p*u_1)=\Z^p*u_2$.
\end{obs}

\begin{proof}[Proof of Theorem~\ref{310}]
The existence of the factorization $\lambda$ of $\mr{mod}_{\un\G^\diam_P}$ in assertion (\ref{unicityfactmap}) 
follows from Theorem~\ref{inivfaible} and its uniqueness under the assumption $\lambda(t_0)=(\tilde z,d)$ is given by Lemma~\ref{liftmodP}. 
Assertion~(\ref{redondancemcg}) follows from Proposition~\ref{orbit-fiber}.
\end{proof}

\begin{cor}
If $P$ is a connected compact manifold such that $H_1(P,\Z)=0$  then any marked global family $\un\G^\diam_P\in\mr{SL}_P(\F^\diam)$ is locally $\mc C^{\mr{ex}}$-trivial, and a fortiori the 
topological class of $\G^\diam_P(t)$, $t\in P$, is constant. 
\end{cor}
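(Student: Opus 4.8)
The plan is to deduce this from Theorem~\ref{inivfaible}, the only extra input being the compactness of $P$. First fix $t_0\in P$. Since $\un\G^\diam_P\in\mr{SL}_P(\F^\diam)$, the fiber $\G^\diam_P(t_0)$ lies in $\mr{SL}(\F^\diam)$, so its class $[\G^\diam_P(t_0)]$ belongs to $\mr{Mod}([\F^\diam])$; by the surjectivity of the moduli map in Theorem~\ref{localuniv}(\ref{thmmod}) there is a point $\tu=(\tz,d)\in\mbf U$ such that $\F^\diam_{\mbf U}(\tu)$ is $\mc C^{\mr{ex}}$-conjugated to $\G^\diam_P(t_0)$. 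As $H_1(P,\Z)=0$, Theorem~\ref{inivfaible} then yields a holomorphic map $\lambda\colon P\to\mbf U$ with $\lambda(t_0)=\tu$ such that $\un\G^\diam_P$ and $\lambda^\ast\un\F^\diam_{\mbf U}$ are locally $\mc C^{\mr{ex}}$-conjugated.

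The next step is to observe that $\lambda$ is constant. Since $P$ is connected and $\mbf D$ carries the discrete topology, $\lambda$ takes values in a single slice $\C^\tau\times\{d\}$; write $\lambda=(\lambda_0,d)$ with $\lambda_0\colon P\to\C^\tau$ holomorphic. Each component of $\lambda_0$ is a holomorphic function on the compact connected complex manifold $P$, hence constant by the maximum principle, so $\lambda_0\equiv\tz$ and $\lambda$ is the constant map with value $\tu$. Consequently $\lambda^\ast\un\F^\diam_{\mbf U}$ is, by the definition of pull-back, the constant family over $P$ (ambient space $M_0\times P$) whose every fiber is $\F^\diam_{\mbf U}(\tu)$. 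Therefore $\un\G^\diam_P$ is locally $\mc C^{\mr{ex}}$-conjugated to a constant family, i.e.\ it is locally $\mc C^{\mr{ex}}$-trivial.

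Finally, for the last assertion, a $\mc C^{\mr{ex}}$-conjugacy between the germ of $\un\G^\diam_P$ at a point $t$ and the germ of the constant family $\lambda^\ast\un\F^\diam_{\mbf U}$ at $t$ restricts, on the fiber over $t$, to a $\mc C^{\mr{ex}}$-conjugacy between $\G^\diam_P(t)$ and $(\lambda^\ast\F^\diam_{\mbf U})(t)=\F^\diam_{\mbf U}(\tu)$. Hence every fiber $\G^\diam_P(t)$, $t\in P$, is $\mc C^{\mr{ex}}$-conjugated to the fixed foliation $\F^\diam_{\mbf U}(\tu)$, in particular $\mc C^{0}$-conjugated to it, so the topological class of $\G^\diam_P(t)$ does not depend on $t$. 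There is no genuine obstacle here beyond the compactness remark forcing $\lambda$ to be constant; everything else is a direct application of the preceding results, and the argument in fact proves slightly more, namely that all the fibers lie in a single $\mc C^{\mr{ex}}$-conjugacy class.
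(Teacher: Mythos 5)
Your proof is correct and is exactly the argument the paper intends (it states this corollary without written proof, as an immediate consequence of Theorem~\ref{inivfaible}): surjectivity of the moduli map gives the base point $\tu$, Theorem~\ref{inivfaible} gives the local \Cex-conjugacy to $\lambda^\ast\un\F^\diam_{\mbf U}$, and compactness plus connectedness (discreteness of $\mbf D$ and the maximum principle) forces $\lambda$ to be constant, so the pulled-back family is the constant family with fiber $\F^\diam_{\mbf U}(\tu)$. The fiberwise consequence you draw, that all fibers lie in one \Cex-class and a fortiori one topological class, is the intended reading of the last assertion.
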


\begin{proof}[Proof of Theorem~\ref{C}]
Assertion (\ref{C0}) corresponds to properties (\ref{formuletau}) and (\ref{expliciterD}) of the exact sequence (\ref{excsequmod1}) stated in Section~\ref{Suniv}.
Property (\ref{thmmod}) of Theorem~\ref{localuniv} implies assertion (\ref{C1}) of Theorem~\ref{C}, while assertion (\ref{C2}) of Theorem~\ref{C} is stated in Theorem~\ref{inivfaible}.
\end{proof}

\section{Topological equivalences for families and deformations}\label{SectionWeakTop} 
We will compare for global families and for germs of deformations, the \Cex-conjugacy relation to a weaker conjugacy relation defined as  the topological conjugacy before reduction, on each fiber of the family, without requiring the continuous dependence on the parameters of the conjugating homeomorphisms.

\subsection{Tame foliations}\label{tame}

Until now the only hypothesis that we have made on the germs of generalized curve foliations is that of being of finite type. Under this hypothesis, which is Krull generic \cite{MS}, we have obtained, for the equivalence relation \Cex, complete families whose modular map is surjective.
In order to obtain the same result for the equivalence relation  $\mc C^0$ we must add a combinatorial assumption on the exceptional divisor $\mc E_\F$ and a dynamical assumption on the transverse structure of the foliation $\F$.
For that let us denote by $\mc E_\F^d$ the union of irreducible components of the exceptional divisor $\mc E_\F$ which are dicritical and by $\mc N\mc C_\F$ the set of singular points of $\mc E_\F$, called \emph{nodal corners}, where the Camacho-Sad index of $\F^\sharp$ is a strictly positive real number. Let us consider the following two conditions:

\medskip

\begin{enumerate}
\item[(NC)] {\it No Chain}: 
the closure of each connected component of $\mc E_\F\setminus\mc E_\F^d$ contains an irreducible component $D$ with $\mr{card}(D\cap\mr{Sing}(\F^\sharp))\neq 2$.
%, i.e. there is no connected component of $\overline{\mc E_\F\setminus\mc E_\F^d}$ as in Figure~\ref{TC1}.\\

\item[(TR)] {\it Transverse Rigidity}: 
if the closure of a connected component of $\mc E_\F\setminus(\mc E_\F^d\cup\mc N\mc C_\F)$
contains an irreducible component with at least $3$ singular points of $\F^\sharp$,  it also contains an irreducible component whose holonomy group for the  foliation $\F^ \sharp$ is {topologically rigid}, for instance unsolvable, cf. \cite{nakai,Rebelo}.\\
\end{enumerate}

%\begin{figure}[h]
%\begin{tikzpicture}
%\node at (6.3,0.6) {$s'$};
%\node at (10.3, 0.6) {$s''$};
%\draw (6.3,0.28) node {$\bullet$};
%\node at (8.35,0.1) {$\cdots$};
%\draw (10.3,0.27) node {$\bullet$};
%\draw (6.785,0.16) node {$\bullet$};
%\draw (9.79,0.16) node {$\bullet$};
%\node at (8.35,0.1) {$\cdots$};
%\draw  (7,0) arc [radius=1, start angle=45, end angle= 130];
%\draw  (8,0) arc [radius=1, start angle=45, end angle= 130];
%\draw  (10,0) arc [radius=1, start angle=45, end angle= 130];
%\draw  (11,0) arc [radius=1, start angle=45, end angle= 130];
%\end{tikzpicture}
%\caption{The only situation excluded by Condition (NC). 
%Every divisor is non-dicritical and the elements of $\mr{Sing}(\F^\sharp)$ are $s'$, $s''$ and the intersection points of the divisors;
%dicritical components may intersect any component.
%}\label{TC1}
%\end{figure}
Condition (NC) is technical and, as for the generalized curve condition, only depends on a  finite order jet of the differential form defining $\F$. In the presence of chains, \Ctop-classification must be approached differently and it will  depend on open questions about the topology of Cremer biholomorphisms in one complex variable. Property~(TR)  is satisfied for a dense open set for the Krull topology  of differential $1$-forms fulfilling condition (NC), cf. \cite{LeFloch}. 
The following theorem, first proven in \cite{MM2} with additional assumptions, then generalized in \cite[Theorem 11.4]{MMS} using results of \cite{Loic1}, justifies these two hypothesis:
\begin{teo}[{\cite[Theorem A]{MMS}}]\label{C0 = Cex}
Two germs of generalized curves foliations $\F$ and $\G$ satisfying (NC) and (TR) are $\mc C^0$-conjugated if and only if they are \Cex-conjugated.
\end{teo}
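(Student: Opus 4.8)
The plan is to prove both implications, the forward one being essentially tautological and the reverse one carrying all the substance. Since an excellent conjugacy is by definition a homeomorphism of the ambient germs sending leaves to leaves and respecting the orientations of ambient spaces and leaves, it is in particular a $\mc C^0$-conjugacy; hence "$\mc C^{\mr{ex}}$-conjugated $\Rightarrow$ $\mc C^0$-conjugated" is immediate and requires no use of (NC) or (TR). So I would concentrate on the converse: starting from a $\mc C^0$-conjugacy $h:(\C^2,0)\iso(\C^2,0)$ with $h(\F)=\G$, I must produce an excellent one.

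First I would lift $h$ through the reductions of singularities. The reduction process, the exceptional divisor and its dual tree are recovered from the topology of $\F$ near $0$ (for generalized curves the reduction is governed by the separatrix curve, and topological conjugacy sends separatrices to separatrices); therefore $h$ induces a homeomorphism $h^\sharp:(M_\F,\mc E_\F)\iso(M_\G,\mc E_\G)$ with $h^\sharp(\F^\sharp)=\G^\sharp$, matching the dual trees, the dicritical components and the sets $\mr{Sing}(\F^\sharp)$, $\mr{Sing}(\G^\sharp)$, and preserving the combinatorial data $D\cdot D'$ and the Camacho–Sad indices. This $h^\sharp$ need not be holomorphic anywhere, and the whole problem is to replace it by a homeomorphism of $(M_\F,\mc E_\F)$ with $(M_\G,\mc E_\G)$ that conjugates $\F^\sharp$ to $\G^\sharp$ and is holomorphic at every non-nodal point of $\mc E_\F$; pushing that down through $E_\F$, $E_\G$ then yields the desired $\mc C^{\mr{ex}}$-conjugacy.

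The heart of the argument is the transverse structure. For every invariant component $D\subset\mc E_\F$, $h^\sharp$ conjugates the holonomy pseudogroup of $\F^\sharp$ along $D\setminus\mr{Sing}(\F^\sharp)$ to that of $\G^\sharp$ along $h^\sharp(D)$, topologically. I would then run a dichotomy over the connected components $C$ of $\overline{\mc E_\F\setminus(\mc E_\F^d\cup\mc N\mc C_\F)}$. If some component of $C$ carries a topologically rigid (e.g. non-solvable) holonomy group, the rigidity theorems of Nakai and Rebelo, in the sharpened form of Le Floch, promote the topological conjugacy of holonomies to an analytic one on that component; this analytic normalization then propagates along the chain — each intermediate component, having exactly two singular points, being rigidified by its neighbour across the corner — and produces a biholomorphic conjugacy of $\F^\sharp$ and $\G^\sharp$ on a neighbourhood of $C$ minus its corners. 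If no component of $C$ carries rigid holonomy, then hypothesis (TR) forces every component of $C$ to have at most two singular points, and hypothesis (NC) guarantees that $C$ is not one of the isolated chains of Figure~\ref{TC1}: its closure meets either a dicritical component or a branching component of valency $\neq 2$ that can be used as an anchor, so the transverse structure can still be straightened analytically there; the configuration genuinely excluded by (NC) is exactly the one where neither rigidity nor an anchor is available and where $\mc C^0$-classification degenerates into Cremer-type questions. Combined with the classical fact that at a linearizable or resonant non-nodal reduced singularity the germ is determined analytically by its Camacho–Sad index and holonomy, this step provides, around each component and each non-nodal singular point, a local biholomorphic conjugacy; near the nodal corners $\mc N\mc C_\F$ one simply keeps $h^\sharp$ as it is, since holomorphy is not required there — which is precisely why "excellent" allows topological behaviour at nodal singularities.

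Finally I would glue these local data. Each local holomorphic conjugacy is unique up to composition with the centralizer of the local foliation (flows along the leaves / inner automorphisms of the holonomy), so on overlaps the discrepancies form a $1$-cocycle with values in these abelian sheaves supported on the dual tree; since a tree has no higher \v{C}ech cohomology, the cocycle is a coboundary and the local conjugacies can be adjusted to agree on overlaps, the adjustment being purely topological across nodal corners. Patching yields a homeomorphism $\tilde h^\sharp:(M_\F,\mc E_\F)\iso(M_\G,\mc E_\G)$ conjugating $\F^\sharp$ to $\G^\sharp$ and holomorphic at every non-nodal point of $\mc E_\F$, i.e.\ the excellent conjugacy sought. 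The main obstacle is the middle step — propagating rigidity along a connected component of $\overline{\mc E_\F\setminus\mc E_\F^d}$ and pinning down exactly which configurations obstruct it — since this is where the combined force of (NC), (TR) and the transverse-rigidity results of Le Floch and Rebelo is indispensable, and it is precisely the reason the statement fails without these two hypotheses (cf.\ \cite{MM2,LeFloch,Loic1}).
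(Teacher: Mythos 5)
First, a point of comparison: this paper does not prove Theorem~\ref{C0 = Cex} at all --- it is quoted verbatim from \cite[Theorem A]{MMS} (see also Theorem 11.4 there), first proved in \cite{MM2} under extra hypotheses and generalized using \cite{Loic1}. So your proposal must be judged on its own, and it has two genuine gaps. The first is that you treat as routine the very first step, namely that a $\mc C^0$-conjugacy $h$ lifts through the reductions and extends to a homeomorphism $h^\sharp:(M_\F,\mc E_\F)\iso(M_\G,\mc E_\G)$ matching components, singular sets, dicritical components and Camacho--Sad indices. None of this is automatic: the topological invariance of the reduction data for foliations (as opposed to curves), the extension of the lift to the divisor, and the topological invariance of the Camacho--Sad indices are themselves substantial theorems (essentially the content of \cite{MM2,MM3}), and in the presence of dicritical components and nodal singularities this is one of the places where the real work lies.

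The second gap is fatal as written: your final gluing step asserts that the discrepancies between the local holomorphic conjugacies form a cocycle with values in the centralizers (flows of basic vector fields) over the dual tree, and that this cocycle is a coboundary ``since a tree has no higher \v{C}ech cohomology''. That vanishing claim is false, and in fact it is contradicted by this very paper: the group $H^1(\A_\F,\T_\F)$ is precisely a first cohomology of a group-graph over the dual tree, and via the exact sequence (\ref{excsequmod1}) it is (up to the lattice $\Z^p$ and the factor $\mbf D$) the moduli space $\mr{Mod}([\F^\diam])$ of $\mc C^{\mr{ex}}$-classes within a fixed SL-type, which has dimension $\tau>0$ in general. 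If your gluing argument were correct, any two foliations with the same SL-data would be $\mc C^{\mr{ex}}$-conjugated and the moduli space would reduce to a point. The actual proof cannot proceed by re-gluing arbitrary local conjugacies; it must exploit the \emph{given global} homeomorphism $h^\sharp$: transverse rigidity (Nakai, Rebelo, Le Floch) forces $h^\sharp$ itself to be transversely holomorphic along the components singled out by (TR), this transverse holomorphy propagates through the holonomy pseudogroups across non-nodal corners, and the hypotheses (NC)+(TR) are exactly what guarantee that the remaining pieces (chains, dead branches, nodal cuts) carry no residual modulus that $h^\sharp$ could fail to respect. Your treatment of the non-rigid components (``an anchor ... so the transverse structure can still be straightened analytically'') gestures at this but provides no argument, and without replacing the cohomological vanishing by an argument anchored on $h^\sharp$, the proof does not go through.
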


\begin{defin}
A germ of singular foliation is called \emph{tame} if it is a generalized curve of finite type satisfying conditions (NC) and (TR).
\end{defin}

\begin{obs}
For a global equisingular family over a connected manifold, properties (NC), (TR) and being of finite type are satisfied by any fiber as soon as they are satisfied by one fiber.
\end{obs}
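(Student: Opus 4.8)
The plan is to prove that each of the three conditions — being of finite type, (NC) and (TR) — is constant on every trivializing neighbourhood of the parameter space, so that, $Q$ being connected, it is either satisfied by all fibres or by none. First I would fix $u_0\in Q$, a trivializing neighbourhood $W\ni u_0$ and the trivializing homeomorphism $\Psi_W$ of~(\ref{trivdivex}), and record the dictionary it provides: its fibrewise restriction $\Psi_u$ is a homeomorphism $\mc E_{\F_W}(u)\iso\mc E_{\F_W}(u_0)$ which carries $\mr{Sing}(\F_W^\sharp(u))$ onto $\mr{Sing}(\F_W^\sharp(u_0))$, hence preserves the dual graph of the exceptional divisor, the cardinalities $\mr{card}(D\cap\mr{Sing}(\F^\sharp))$ on each component, and the partition of the components into invariant and dicritical ones; moreover the equisingularity of $\un\F_Q$ at $u_0$ supplies, on the one hand, biholomorphisms $g_u\in\mr{Diff}(\C,0)$ conjugating the holonomy morphism $\mc H_{D_u}$ to $\mc H_{D_{u_0}}$ for every invariant component, and on the other hand the constancy in $u$ of the Camacho-Sad functions $u\mapsto\mr{CS}(\F_W(u),D_u,m_u)$.

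With this dictionary the three cases become routine bookkeeping. Finite type needs no new argument: it is exactly Remark~\ref{obsFu}, since the finite-type condition of \cite[Definition~6.7]{MMS} depends only on the holonomy morphisms, which are conjugate over $W$ by the first equisingularity condition. For (NC) I would observe that all the data entering the condition — the connected components of $\mc E_\F\setminus\mc E_\F^d$ and the numbers $\mr{card}(D\cap\mr{Sing}(\F^\sharp))$ — are combinatorial and are transported verbatim by $\Psi_u$; hence (NC) holds for $\F_W(u)$ iff it holds for $\F_W(u_0)$. For (TR) there is one extra point to verify, namely that the set $\mc N\mc C_\F$ of nodal corners is transported: a corner of the divisor is nodal precisely when the Camacho-Sad index of $\F^\sharp$ there is a strictly positive real number, and since $\Psi_u$ matches the singular points while the Camacho-Sad indices stay constant in $u$, the nodal corners of $\F_W(u)$ correspond to those of $\F_W(u_0)$; the combinatorial part of (TR) then transports as before, and its dynamical part survives because the holonomy group of each invariant component over $u$ is conjugate inside $\mr{Diff}(\C,0)$ to the one over $u_0$, so that topological rigidity (for instance, unsolvability) is preserved. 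Thus (TR), like (NC) and finite type, is the same for all fibres over $W$, and the conclusion follows from the connectedness of $Q$.

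I do not expect a genuine obstacle: the content of the statement is essentially that every ingredient of the definitions of finite type, (NC) and (TR) is already encoded in the notion of equisingular family, through $\Psi_W$ together with the two equisingularity conditions recalled above. The only point that deserves a line of justification is the identification of the dicritical components under $\Psi_u$: here one uses that, for a generalized curve in reduced form, an irreducible component of the exceptional divisor is dicritical exactly when it contains no point of $\mr{Sing}(\F^\sharp)$, so that a homeomorphism preserving $\mr{Sing}(\F^\sharp)$ does preserve the invariant/dicritical dichotomy. Everything else is a direct transcription.
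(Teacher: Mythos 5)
Your argument is correct and is exactly the routine verification the paper leaves implicit (the statement appears only as a remark without proof): finite type is the paper's own Remark~\ref{obsFu}, and (NC), (TR) follow because the trivializing homeomorphism $\Psi_W$ transports the combinatorial data while equisingularity conditions (a) and (b) preserve the holonomy representations up to conjugation in $\mr{Diff}(\C,0)$ and keep the Camacho-Sad indices (hence the nodal corners) constant. Your extra remark that the invariant/dicritical dichotomy is detected by the presence of points of $\mr{Sing}(\F^\sharp)$ is the right justification for that step, so nothing is missing.
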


\begin{proof}[Proof of Theorem~\ref{main}]
We mark $\F$ by $\mc E_\F$ using the identity map; to obtain $\un\F_{\mbf U}$ we apply Theorem~\ref{C} to $\F^\diam=(\F,\mr{id}_{\mc E_\F})$, that also provides a marking $(f_u)_{u\in\mbf U}$ on $\un\F_{\mbf U}$.

We begin by proving assertion (\ref{modsurj}). 
Since $\G$ has the same SL-type as $\F$ there exists a homeomorphism $\varphi:\mc E_\F\to\mc E_\G$ satisfying properties (SL1)-(SL3). We consider the marked by $\mc E_\F^\diam=(\mc E_\F,\mr{Sing}(\F^\sharp),\cdot)$ foliation $\G^\diam=(\G,\varphi)$ which has the same marked SL-type as $\F^\diam$. By assertion (1) of Theorem~\ref{C} there exists $u_0\in\mbf U$ such that $\G^\diam$ is \Cex-conjugated to $\F_{\mbf U}^\diam(u_0)$. A fortiori, $\G$ is \Ctop-conjugated to $\F_{\mbf U}(u_0)$.

Let us now prove assertion (\ref{globalfactnonmark}). As $\G_P(t_0)$ is \Ctop-conjugated to $\F_{\mbf U}(u_0)$,
by Theorem~\ref{C0 = Cex} there is a \Cex-conjugacy  $\phi:\F_{\mbf U}(u_0)\to\G_P(t_0)$.
The composition $g_{t_0}:=\phi^\sharp\circ f_{u_0}:\mc E_\F\to\mc E_{\G_P(t_0)}$ of the lifting of $\phi$ through the reduction maps and the marking of $\F_{\mbf U}(u_0)$ defines a marking of $\G_P(t_0)$ such that $\G_P^\diam(t_0)$ is \Cex-conjugated to $\F_{\mbf U}^\diam(u_0)$ by $\phi^{-1}$. Since $P$ is simply connected, by assertion (\ref{extmarking}) of Remark~\ref{prglobalmark}, the marking $g_{t_0}$ extends to a marking $(g_t)_{t\in P}$ of the global family $\un\G_P$. We apply assertion~(\ref{C2}) of Theorem~\ref{C} to $\un\G_P^\diam=(\un\G_P,(g_t)_{t\in P})$ and we obtain a (unique) holomorphic map $\lambda:P\to\mbf U$ such that $\lambda(t_0)=u_0$ and for any $t\in P$ the germs of marked families $\un\G^\diam_{P,t}$ and $\lambda^*\un\F_{\mbf U,\lambda(t)}^\diam$ over the germ of manifold $(P,t)$ are \Cex-conjugated. A fortiori, the germs of families
 $\un\G_{P,t}$ and $\lambda^*\un\F_{\mbf U,\lambda(t)}$ are \Ctop-conjugated.

Redundancy property (\ref{redondance}) in Theorem~\ref{main} follows from assertion (\ref{redondancemcg}) in Theorem~\ref{310} and Remark~\ref{identification} taking into account that \Cex-conjugacy and \Ctop-conjugacy are equivalent for tame foliations, see Theorem~\ref{C0 = Cex}.
\end{proof}

\subsection{Weak and strong conjugacies of families}\label{weak-strong}

In this section we will prove Theorem~\ref{conjug} of the introduction. Before that, we state a marked version of that result in which the hypothesis are weaker.

\begin{teo}\label{teo91}
Let $\un\F^\diam_Q$ and $\un\G_Q^\diam$ be marked by $\mc E^\diam$ equisingular global families of foliations over a complex manifold $Q$, whose fibers are generalized curves of finite type. The following properties are equivalent
\begin{enumerate}
\item  for any $u\in Q$ the marked foliations  $\F^\diam_{Q}(u)$ and $\G^\diam_Q(u)$ are \Cex-conjugated,
\item the marked global families $\un\F^\diam_Q$ and $\un\G^\diam_Q$  are  locally \Cex-conjugated.
\end{enumerate} 
\end{teo}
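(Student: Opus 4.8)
The implication $(2)\Rightarrow(1)$ is immediate: if the germs of $\un\F^\diam_Q$ and $\un\G^\diam_Q$ at a point $u\in Q$ are $\mc C^{\mr{ex}}$-conjugated as marked families, restricting the conjugating homeomorphism to the special fibre produces a $\mc C^{\mr{ex}}$-conjugacy of the marked foliations $\F^\diam_Q(u)$ and $\G^\diam_Q(u)$. So the content is the implication $(1)\Rightarrow(2)$, and since ``locally $\mc C^{\mr{ex}}$-conjugated'' (Definition~\ref{locexcong}) is a condition at each point of $Q$, it suffices to show that for every $u_0\in Q$ the germs $\un\F^\diam_{Q,u_0}$ and $\un\G^\diam_{Q,u_0}$ are $\mc C^{\mr{ex}}$-conjugated as marked families.

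Fix $u_0\in Q$ and choose a connected, simply connected neighbourhood $W$ of $u_0$ (a polydisc chart), so that $H_1(W,\Z)=0$. Put $\F^\diam:=\F^\diam_Q(u_0)$; by hypothesis this is a marked by $\mc E^\diam$ finite type foliation which is a generalized curve. Since $\F^\diam_Q(u_0)$ and $\G^\diam_Q(u_0)$ are $\mc C^{\mr{ex}}$-conjugated as marked foliations they share the same marked SL-type, hence $\G^\diam_Q(u_0)\in\mr{SL}(\F^\diam)$; by Remark~\ref{extsl} the restrictions $\un\F^\diam_W$ and $\un\G^\diam_W$ of the two families to $W$ both belong to $\mr{SL}_W(\F^\diam)$.

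Let $\un\F^\diam_{\mbf U}$ be the marked global family furnished by Theorem~\ref{localuniv}, and choose $\tu\in\mbf U$ such that $\F^\diam=\F^\diam_Q(u_0)$ is $\mc C^{\mr{ex}}$-conjugated to $\F^\diam_{\mbf U}(\tu)$; then $\G^\diam_Q(u_0)$ is $\mc C^{\mr{ex}}$-conjugated to $\F^\diam_{\mbf U}(\tu)$ as well. Applying Theorem~\ref{inivfaible} separately to $\un\F^\diam_W$ and to $\un\G^\diam_W$ (both legitimate, as $H_1(W,\Z)=0$) we obtain unique holomorphic maps $\lambda_\F,\lambda_\G\colon W\to\mbf U$ with $\lambda_\F(u_0)=\lambda_\G(u_0)=\tu$ such that $\un\F^\diam_W$ is locally $\mc C^{\mr{ex}}$-conjugated to $\lambda_\F^\ast\un\F^\diam_{\mbf U}$ and $\un\G^\diam_W$ is locally $\mc C^{\mr{ex}}$-conjugated to $\lambda_\G^\ast\un\F^\diam_{\mbf U}$. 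Now for every $u\in W$ the marked foliations $\F^\diam_W(u)$ and $\G^\diam_W(u)$ are $\mc C^{\mr{ex}}$-conjugated, so they represent the same class in $\mr{Mod}([\F^\diam])$; hence $\mr{mod}_{\un\F^\diam_W}=\mr{mod}_{\un\G^\diam_W}$, and (using (\ref{propLambda}) and the factorization relation established inside the proof of Theorem~\ref{inivfaible}) $\lambda_\F$ and $\lambda_\G$ are two holomorphic liftings of this common moduli map through $\mr{mod}_{\un\F^\diam_{\mbf U}}$ which agree at $u_0$. By Lemma~\ref{liftmodP} (equivalently, by the uniqueness statement in Theorem~\ref{310}(\ref{unicityfactmap})) they coincide: $\lambda_\F=\lambda_\G=:\lambda$. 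Consequently $\un\F^\diam_W$ and $\un\G^\diam_W$ are both locally $\mc C^{\mr{ex}}$-conjugated to $\lambda^\ast\un\F^\diam_{\mbf U}$; composing the germ-level conjugacies at each point of $W$, they are locally $\mc C^{\mr{ex}}$-conjugated to one another, and in particular the germs $\un\F^\diam_{Q,u_0}$ and $\un\G^\diam_{Q,u_0}$ are $\mc C^{\mr{ex}}$-conjugated as marked families. As $u_0\in Q$ was arbitrary, $(2)$ holds.

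The proof is essentially an orchestration of the results of Sections~\ref{SLU} and~\ref{Sfact}; the two points needing care are the normalization of the two factorizing maps to a common base value $\tu$ — legitimate precisely because the fibres $\F^\diam_Q(u_0)$ and $\G^\diam_Q(u_0)$ are $\mc C^{\mr{ex}}$-conjugated to each other, hence to a single $\F^\diam_{\mbf U}(\tu)$ — and the fact that the hypothesis provides a fibrewise $\mc C^{\mr{ex}}$-conjugacy of \emph{marked} foliations, which is exactly what identifies the two moduli maps as maps into $\mr{Mod}([\F^\diam])$ and thereby makes Lemma~\ref{liftmodP} applicable. Transitivity of germ-wise $\mc C^{\mr{ex}}$-conjugacy of marked families, used at the end, is routine (compose the conjugating homeomorphisms, carrying the markings along).
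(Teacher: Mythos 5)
Your proposal is correct and follows essentially the same route as the paper's proof: reduce to a (simply) connected parameter space, note the two families lie in $\mr{SL}(\F^\diam)$ and have equal moduli maps, factor both through the locally universal family $\un\F^\diam_{\mbf U}$ via Theorem~\ref{inivfaible} with a common normalization at the base point, and invoke the uniqueness of liftings (Lemma~\ref{liftmodP}, equivalently Theorem~\ref{310}(\ref{unicityfactmap})) to identify the two factorizing maps and conclude by transitivity. The only cosmetic difference is that you localize to a polydisc neighbourhood of each point, whereas the paper directly assumes $Q$ connected and simply connected.
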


\begin{proof}
The implication $(2)\Rightarrow(1)$ is trivial. To prove the converse we can assume that $Q$ is connected and simply connected.
Let us fix a fiber $\F^\diam:=\F_Q^\diam(\tu)$, $\tu\in Q$.  According to the connectedness of $Q$ and Remark~\ref{extsl},
each $\G^\diam_Q(u)$, $u\in Q$,  belongs to $\mr{SL}(\F^\diam)$, see Definition~\ref{deftypeSL}. By assertion (1) we have the equality
\begin{equation*}\label{equmodmaps}
\mr{mod}_{\un\G_Q^\diam}=\mr{mod}_{\un\F^\diam_Q} : Q\longrightarrow \mr{Mod}([\F^\diam])\,.
\end{equation*}
Let $\un\F_{\mbf U}^{\diam}$ be 
the marked global family given by Theorem~\ref{localuniv}.
Let us consider
 $(\tz,d)\in \mbf U$ such that $\F_Q^\diam(\tu)$ is \Cex-conjugated to $\F_{\mbf U}^{\diam}(\tz,d)$.
Since $\F_Q^\diam(\tu)$ and $\G_Q^\diam(\tu)$ are \Cex-conjugated,
Theorem \ref{inivfaible} provides  holomorphic maps $\lambda,\mu: Q\to\C^\tau\times\{d\}\subset\mbf U$ satisfying $\lambda(\tu)=\mu(\tu)=(\tz,d)$, such that 
$\un\F^\diam_Q$ is locally \Cex-conjugated to $\lambda^\ast\un\F_{\mbf U}^{\diam}$ and $\un\G^\diam_Q$ is locally \Cex-conjugated to $\mu^\ast\un\F_{\mbf U}^{\diam}$. We thus have:
\[
\mr{mod}_{\un\F_{\mbf U}^{\diam}}\circ\lambda=\mr{mod}_{\un\F^\diam_Q}=\mr{mod}_{\un\G^\diam_Q}=\mr{mod}_{\un\F_{\mbf U}^{\diam}}\circ\mu
\,.
\]
Consequently $\lambda$ and $\mu$  are two liftings of the map $\mr{mod}_{\un\G^\diam_Q}=\mr{mod}_{\un\F^\diam_Q}$ through the map $\mr{mod}_{\un\F_{\mbf U}^{\diam}}$,  which coincide at the point $\tu$. It follows from the uniqueness in assertion~(\ref{unicityfactmap}) of Theorem~\ref{310} that $\lambda=\mu$. Therefore $\un\F_Q^\diam$ and $\un\G^\diam_Q$ are locally \Cex-conjugated, since they are  both locally \Cex-conjugated to $ \lambda^\ast\un\F_{\mbf U}^{\diam}=\mu^\ast\un\F_{\mbf U}^{\diam}$.
\end{proof}

Now we will use Theorem~\ref{teo91} to prove Theorem~\ref{conjug} of the introduction.

\begin{dem2}{of Theorem~\ref{conjug}}
Thanks to Theorem~\ref{C0 = Cex}, assertions~(\ref{weakconj}) and~(\ref{exconj})  are equivalent.
The implication $(\ref{strongconj})\Longrightarrow (\ref{weakconj})$ is trivial. To prove $(\ref{exconj})\Longrightarrow (\ref{strongconj})$
let us fix a point $\tu$ in $Q$ and a marking $f_{\tu}: \mc E\iso \mc E_{\F_Q(\tu)}$ of the fiber $\F_Q(\tu)$
by an appropriate marked divisor~$\mc E^\diam$. 
By restricting both families to a suitable neighborhood of $\tu$ we may assume that $Q$ is connected and simply connected. 
Thanks to (\ref{extmarking}) in Remark \ref{prglobalmark}, $f_\tu$ extends to a marking $(f_u)_{u\in Q}$ of the global family $\un\F_Q$ and we will write:
\[\un\F_Q^\diam:=(\un\F_Q,(f_u)_{u\in Q})\quad\text{and}\quad \F^\diam:=(\F_Q(\tu), f_{\tu})\,.\]
According to Theorem~\ref{teo91}, 
in order to obtain assertion (\ref{strongconj}) 
it only remains to prove the existence of a marking $(G_u)_{u\in Q}$ of $\un\G_Q$ such that for each $u\in Q$ the marked foliation $\G_Q^\diam(u):=(\G_Q(u),G_u)$ is \Cex-conjugated to $\F^\diam_Q(u)$:
\begin{equation}\label{goodmarking}
[(\G_Q(u), G_u)]=[(\F_Q(u),f_u)]\in\mr{Mod}([\F^\diam])\,,\quad
u\in Q\,.
\end{equation}
For this, we choose for each $u\in Q$ a \Cex-conjugacy 
\[
{\phi}_u:(M(u),\theta(u))\iso(N(u),\vartheta(u))\,,
\quad
{\phi}_u(\F_Q(u))=\G_Q(u)\,,
\]
and we denote by ${\phi}^\sharp_u : (M_{\F_Q(u)},\mc E_{\F_Q(u)})\to (M_{\G_Q(u)},\mc E_{\G_Q(u)})$  the germ of homeomorphism obtained by lifting it through the reduction of singularities of $\F_Q(u)$ and $\G_Q(u)$.  We endow $\un\G_Q$ with a marking by $\mc E^\diam$
\[
g_u : \mc E\to\mc E_{\G_Q(u)}\,,\quad u\in Q\,,
\] 
obtained thanks to Remark \ref{prglobalmark} by extending the marking ${\phi}_\tu^\sharp\circ f_{\tu}$ of $\G_Q(\tu)$. 
We also consider the following pre-marking of $\un\G_Q$:
\[
{\phi}^\sharp_{u}\circ f_u:\mc E\to\mc E_{\G_Q(u)}\,,\quad u\in Q\,.
\]
Since ${\phi}_u$ is a \Cex-conjugacy from $\F_Q^\diam(u)$ to $(\G_Q(u),\,{\phi}^\sharp_u\circ f_u)$ and $\F^\diam_Q(u)$ belongs to $\mr{SL}(\F^\diam)$, this  pre-marking satisfies
\begin{equation*}
(\G_Q(u),\,{\phi}^\sharp_u\circ f_u)\in \mr{SL}(\F^\diam)
\end{equation*}
 for each $u\in Q$.

As in \S\ref{marquages}, we denote
 by ${\mr{Mcg}}(\mc E^\diam)$ the {mapping class group} of $\mc E^\diam=(\mc E,\Sigma,\cdot)$, that is the group of  isotopy classes {$\dot\varphi$}  of  homeomorphisms  ${\varphi}:\mc E\iso \mc E$ leaving invariant the symmetric map $\cdot$ and  the set $\Sigma$. For each ${\dot\varphi\in \mr{Mcg}}(\mc E^\diam)$ let us consider the set 
\begin{equation*}
K_{\dot\varphi}:=\left\{u\in Q\;;\; g_u^{-1}\circ\phi^\sharp_u\circ f_u\in\dot\varphi\right\}.
\end{equation*}
Since $\mr{Mcg}(\mc E^\diam)$ is countable and 
\[
\bigcup_{\dot\varphi\in \mr{Mcg}(\mc E^\diam)} K_{\dot\varphi}= Q\,,
\]
there exists an element ${\dot\varphi_0\in \mr{Mcg}}(\mc E^\diam)$ such that $K_{{\dot\varphi_0}}$ is not contained in any countable union  of proper closed analytic subsets of $Q$. Let us consider the marked by $\mc E^\diam$ equisingular  global family 
\[
\un\G_Q^\diam:=\left(\un\G_Q,\, (G_u)_{u\in Q}\right)\,,\quad 
G_u:= g_u\circ{\varphi_0}\,.
\]
We highlight that 
\begin{equation}\label{conjKPhi}
{\phi}_u(\F_Q^\diam(u))=\G_Q^\diam(u) \,,\quad\hbox{ if }\quad u\in K_{{\dot\varphi_0}}\,,
\end{equation}
as in this case $g_u\circ{\varphi_0}$ is isotopic to ${\phi}^\sharp_u\circ f_u$. 
Therefore $\G_Q^\diam(u)$ belongs to $\mr{SL}(\F^\diam)$ when $u\in K_{{\dot\varphi_0}}$.
It follows from Remark \ref{extsl} that  $\un\G_Q^\diam$ belongs to $\mr{SL}_Q(\F^\diam)$ and we can consider the map $Q\ni u\mapsto [\G^\diam_Q(u)]\in\mr{Mod}([\F^\diam])$.

Let us now consider the map $\beta:\mr{Mod}([\F^\diam])\to\mbf D$ in the exact sequence (\ref{excsequmod1}).
By Corollary  \ref{BetaLambdacCt} there is
 $d\in\mbf D$ such that $\beta([\F^\diam_Q(u)])=d$, for every $u\in Q$. From (\ref{conjKPhi}) and Corollary \ref{BetaLambdacCt} we also have $\beta([\G^\diam_Q(u)])=d$, for every $u\in Q$.
Let us fix $u_1\in K_{{\dot\varphi_0}}$ and $z_1\in\C^\tau$ satisfying
\[
[\F_Q^\diam(u_1)]=[\G_Q^\diam(u_1)]=\Lambda(z_1)\cdot\zeta(d) 
\]
By Theorem  \ref{inivfaible} there exist two holomorphic maps 
\[
\lambda : Q\longrightarrow \C^\tau\,,\qquad
\lambda' : Q\longrightarrow \C^\tau\,,
\]
satisfying $\lambda(u_1)=\lambda'(u_1)=z_1$ and
\begin{equation}\label{LlambdaFG}
\Lambda(\lambda(u))\cdot\zeta(d)=[\F_Q^\diam(u)]\,,\quad
\Lambda(\lambda'(u))\cdot\zeta(d)=[\G_Q^\diam(u)]\,,
\quad
u\in Q\,,
\end{equation}
where $\Lambda:\C^\tau\to\mr{Mod}([\F^\diam])$ is the map in (\ref{excsequmod1}).
From (\ref{conjKPhi})  for $u\in K_{{\dot\varphi_0}}$ we have
\[
\Lambda(\lambda(u))\cdot\zeta(d)=\Lambda(\lambda'(u))\cdot\zeta(d)
\]
and  $\lambda(u)-\lambda'(u)$ belongs to $\ker(\Lambda)$.
For each $u\in K_{\dot\varphi_0}$ we fix $N\in\Z^p$ such that $\lambda(u)=\lambda'(u)+\alpha(N)$. We have
\[K_{\dot\varphi_0}\subset\bigcup_{N\in\Z^p}L_N\quad\text{
where}\quad
L_N=\{
u\in Q\;;\;\lambda(u)-\lambda'(u)=\alpha(N)
\}.
\]
Since each $L_N$ is a closed analytic subset of $Q$ and $K_{{\dot\varphi_0}}$ is not contained in a countable union of proper such sets, there exists $\wt N\in \Z^p$ such that $L_{\wt N}=Q$. 
Consequently
\[
\lambda(u)=\alpha(\wt N)+\lambda'(u)\quad \hbox{for every}\quad u\in Q\,.
\]
Then equalities (\ref{LlambdaFG}) give the required equalities (\ref{goodmarking}); that ends the proof.
\end{dem2}

\subsection{Conjugacies of families versus conjugacies  of deformations} According to Remark \ref{prglobalmark}, any deformation of a marked foliation may be canonically endowed with a marking.
We will see that under finite type assumptions, the notion of conjugacy of deformations is equivalent to  that of  conjugacy of their marked associated families.

\begin{teo}\label{equideffam} 
 Let us consider a finite type foliation $\F$ which is a generalized curve,  
 $f:\mc E\to \mc E_\F$ a marking of $\F$, $(\un\F_{Q^\point},\iota)$ and $(\un\G_{Q^\point},\delta)$ two equisingular deformations of $\F$ over a germ of manifold $Q^\point:=(Q,\tu)$. 
Let us denote by  $\un\F_{Q^\point}^\diam$ and $\un\G_{Q^\point}^\diam$ the families $\un\F_{Q^\point}$ resp. $\un\G_{Q^\point}$ endowed with the markings induced by the markings $\iota^\sharp\circ f$ and $\delta^\sharp\circ f $  of their special fibers.
Then the following properties are equivalent:
\begin{enumerate}
\item\label{conjcomp} there is a \Cex-conjugacy $\phi$ between the germs of families of $\un\F_{Q^\point}$ and $\un\G_{Q^\point}$ such that the lifting of $\delta^{-1}\circ\phi\circ\iota$ through the reduction of singularities of $\F$ leaves invariant each irreducible component of $\mc E_\F$,
\item\label{conjassfam} the marked families $\un\F_{Q^\point}^\diam$ and $\un\G_{Q^\point}^\diam$ are \Cex-conjugated,
\item\label{conjdeformations} the deformations $(\un\F_{Q^\point},\iota)$ and $(\un\G_{Q^\point},\delta)$ are \Cex-conjugated.
\end{enumerate} 
\end{teo}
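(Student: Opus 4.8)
The plan is to establish the cycle of implications $(\ref{conjdeformations})\Rightarrow(\ref{conjassfam})\Rightarrow(\ref{conjcomp})\Rightarrow(\ref{conjdeformations})$, the first two being formal while all the content sits in the last. First I would shrink $Q$ so that $Q^\point=(Q,\tu)$ is the germ of a connected and simply connected manifold; then a marking of the special fibre extends, uniquely up to equivalence, to a marking of the whole germ of family by Remark~\ref{prglobalmark}(\ref{extmarking}), and by the local coherence of markings the compatibility of a conjugacy of families with prescribed markings only needs to be tested on the special fibre. With these conventions, $(\ref{conjdeformations})\Rightarrow(\ref{conjassfam})$ is immediate: if $\phi$ conjugates the deformations then $\phi\circ\iota=\delta$, so on the special fibre $(\delta^\sharp\circ f)^{-1}\circ\phi^\sharp\circ(\iota^\sharp\circ f)=f^{-1}\circ(\delta^{-1}\circ\phi\circ\iota)^\sharp\circ f=\mathrm{id}_{\mc E}$, whence $\phi$ is a \Cex-conjugacy of the marked families. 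Conversely $(\ref{conjassfam})\Rightarrow(\ref{conjcomp})$ because a \Cex-conjugacy $\Phi$ of the marked families satisfies $f^{-1}\circ(\delta^{-1}\circ\Phi\circ\iota)^\sharp\circ f\simeq\mathrm{id}_{\mc E}$ relatively to $\Sigma$ on the special fibre, so in particular $(\delta^{-1}\circ\Phi\circ\iota)^\sharp$ leaves each irreducible component of $\mc E_\F$ invariant.

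For $(\ref{conjcomp})\Rightarrow(\ref{conjdeformations})$ I would argue as follows. Let $\phi:\un\F_{Q^\point}\to\un\G_{Q^\point}$ be the given conjugacy of families and put $g:=\delta^{-1}\circ\phi\circ\iota$, a \Cex-self-conjugacy of $\F$ whose lift $g^\sharp$ leaves each irreducible component of $\mc E_\F$ invariant, with $\phi\circ\iota=\delta\circ g$ on the special fibre. A short computation then shows that $\psi:=\phi\circ\Theta$ conjugates the deformation $(\un\F_{Q^\point},\iota)$ to $(\un\G_{Q^\point},\delta)$ as soon as $\Theta:\un\F_{Q^\point}\to\un\F_{Q^\point}$ is a \Cex-conjugacy of families with $\Theta\circ\iota=\iota\circ g^{-1}$, i.e.\ a \Cex-conjugacy of deformations from $(\un\F_{Q^\point},\iota)$ to $(\un\F_{Q^\point},\iota\circ g^{-1})$. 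Applying Theorem~\ref{phi*} to the excellent self-conjugacy $g^{-1}:\F\to\F$ gives $(g^{-1})^\ast[\un\F_{Q^\point},\iota]=[\un\F_{Q^\point},\iota\circ g^{-1}]$ in $\mr{Def}_\F^{Q^\point}$, so I would be reduced to showing that $(g^{-1})^\ast$ fixes $[\un\F_{Q^\point},\iota]$; and I would aim for the cleaner statement that the bijection $h^\ast$ of $\mr{Def}_\F^{Q^\point}$ is the identity for \emph{every} \Cex-self-conjugacy $h$ of $\F$ whose lift leaves each irreducible component of $\mc E_\F$ invariant, and then take $h=g^{-1}$.

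To prove that last statement I would use the locally universal family. Realize $\F$ as a fibre $\F_{\mbf U}(\tilde u)$ with its induced marking, which is possible by the surjectivity of the moduli map (Theorem~\ref{localuniv}(\ref{thmmod})), so that $(\un\F_{\mbf U,\tilde u},\iota_{\tilde u})$ is \Cex-universal (Theorem~\ref{localuniv}(\ref{thmlocuniv})). Since $h^\ast$ carries \Cex-universal deformations to \Cex-universal ones and $h^\ast[\un\F_{\mbf U,\tilde u},\iota_{\tilde u}]=[\un\F_{\mbf U,\tilde u},\iota_{\tilde u}\circ h]$, this last deformation is again \Cex-universal over $(\mbf U,\tilde u)$, so by uniqueness of \Cex-universal deformations and Remark~\ref{rempreliminaires} there is a germ of biholomorphism $\nu:(\mbf U,\tilde u)\to(\mbf U,\tilde u)$ with $[\un\F_{\mbf U,\tilde u},\iota_{\tilde u}\circ h]=\nu^\ast[\un\F_{\mbf U,\tilde u},\iota_{\tilde u}]$; compatibility of $h^\ast$ with pull-backs then gives $h^\ast[\un\F_{Q^\point},\iota]=(\nu\circ\lambda)^\ast[\un\F_{\mbf U,\tilde u},\iota_{\tilde u}]$, where $\lambda:Q^\point\to(\mbf U,\tilde u)$ classifies $(\un\F_{Q^\point},\iota)$, so it is enough to show $\nu=\mathrm{id}$. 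For this I would note that $h$, leaving each component of $\mc E_\F$ invariant, fixes every crossing point, induces the identity automorphism of the graph $\A_\F$, and acts as the identity on each one-dimensional space $\T_\F(\langle D,D'\rangle)$ (at a non-nodal crossing $h^\sharp$ is a germ of biholomorphism fixing the point and preserving $\F^\sharp$, hence preserving the canonical transverse infinitesimal symmetry; at a nodal crossing $\T_\F(\langle D,D'\rangle)=0$), so $h$ induces the identity on the complex (\ref{suiteH1}) and on $H^1(\A_\F,\T_\F)$. Since $(\un\F_{\mbf U,\tilde u},\iota_{\tilde u})$ and $(\un\F_{\mbf U,\tilde u},\iota_{\tilde u}\circ h)$ share the same underlying family, hence the same Kodaira--Spencer map at $\tilde u$ (which depends only on the family, not on the embedding, cf.\ Theorem~\ref{psiD} and the discussion before Theorem~\ref{geometric-basis}), and since the Kodaira--Spencer map of a \Cex-universal deformation is an isomorphism transforming under pull-back by composition with the differential (Theorem~\ref{universal-criterion}), one gets $d\nu|_{\tilde u}=\mathrm{id}$; combined with the fact that the moduli map $\mr{mod}_{\un\F^\diam_{\mbf U}}:\mbf U\to\mr{Mod}([\F^\diam])$ intertwines $\nu$ with the group automorphism of $\mr{Mod}([\F^\diam])$ induced by $h$, which is trivial on the subgroup $\Lambda(\C^\tau)$ and on the quotient $\mbf D$ of the exact sequence (\ref{excsequmod1}), this forces $\nu=\mathrm{id}$.

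The main obstacle is precisely this last step: proving that a \Cex-self-conjugacy whose lift leaves each irreducible component of $\mc E_\F$ invariant acts trivially on $\mr{Def}_\F^{Q^\point}$ (equivalently on $\mr{Mod}([\F^\diam])$), even though such a self-conjugacy may still permute the free singular points lying on a single component. This is where the hypotheses that $\F$ is a generalized curve of finite type and the fine description of $\mr{Mod}([\F^\diam])$ from \cite{MMS} — in particular the identification of its residual redundancy with the mapping class group action of Theorem~\ref{310}(\ref{redondancemcg}) and the compatible action on $\mr{Def}$ — enter decisively; the remaining steps are routine bookkeeping with embeddings and markings.
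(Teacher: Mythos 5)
Your reduction is essentially the paper's: the two easy implications, then reducing $(\ref{conjcomp})\Rightarrow(\ref{conjdeformations})$ to the statement that a \Cex-self-conjugacy $h$ of $\F$ whose lift fixes each irreducible component of $\mc E_\F$ acts trivially on $\mr{Def}_\F^{Q^\point}$ is exactly how the paper proceeds (its Theorem~\ref{teoextensionequiv}, which rests on Lemma~\ref{sigmaid}). But the crux is precisely that last statement, and you do not prove it. The decisive step in your sketch is the parenthetical claim that ``at a non-nodal crossing $h^\sharp$ is a germ of biholomorphism fixing the point and preserving $\F^\sharp$, hence preserving the canonical transverse infinitesimal symmetry''. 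This is not automatic: $\T_\F(\langle D,D'\rangle)$ is one-dimensional, so a priori $h^{\sharp\ast}$ could act on it by an arbitrary non-trivial scalar, and nothing in ``fixes the corner and preserves $\F^\sharp$'' rules that out by itself. The paper's proof of Lemma~\ref{sigmaid} is devoted to exactly this point: it propagates the question along the component using the unique-continuation property of sections of $\un{\mc B}_\F/\un{\mc X}_\F$, pushes $h^\sharp$ down to the transversal via a first integral $I_p$ to get a germ $\phi^\flat\in\mr{Diff}(\C,0)$ commuting with the holonomy around the corner, and then invokes \cite[Proposition~6.10]{MMS} (the structure of that centralizer: $\phi^\flat=\ell\circ\exp(Z^\flat)[t_1]$ with $\ell$ linear periodic and $\ell^\ast Z^\flat=Z^\flat$) to conclude $\phi^{\flat\ast}Z^\flat=Z^\flat$ and hence $h^{\sharp\ast}[Z]=[Z]$ in $\T_\F(\ge)$. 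You acknowledge in your last paragraph that this is ``the main obstacle'', but the mechanism you point to — the mapping class group action of Theorem~\ref{310}(\ref{redondancemcg}) — is about changing markings by non-trivial mapping classes and does not yield the triviality of $[h^\ast]$ on $H^1(\A_\F,\T_\F)$; so the gap remains.

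Two secondary remarks. First, $\iota\circ g^{-1}$ is not a holomorphic embedding (the self-conjugacy $g$ is only \Cex), so ``$(g^{-1})^\ast[\un\F_{Q^\point},\iota]=[\un\F_{Q^\point},\iota\circ g^{-1}]$'' is not literally meaningful; the correct formulation is the one in Theorem~\ref{phi*}: there is a representative $(\un\G_{Q^\point},\delta')$ of $(g^{-1})^\ast[\un\F_{Q^\point},\iota]$ together with a conjugacy $\Phi'$ satisfying $\Phi'\circ\delta'=\iota\circ g^{-1}$, and your construction of $\Theta$ then works verbatim. Second, once one knows that $[h^\ast]$ is the identity on $H^1(\A_\F,\T_\F)$, the detour through the universal family, the biholomorphism $\nu$, the Kodaira--Spencer maps and the moduli map is unnecessary: the functor isomorphism $\xi:\mr{Def}\iso\mr{Fac}$ of Theorem~\ref{xi} identifies $h^\ast$ with post-composition by $[h^\ast]$, which gives $h^\ast=\mr{id}$ on $\mr{Def}_\F^{Q^\point}$ directly — this is how the paper concludes.
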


The proof of this theorem is based on the following property of the pull-back map~$\phi^*$ introduced in Theorem~\ref{phi*}. We recall, see Definition~\ref{conj-def}, that
\[\mr{Def}_\F^{P^\point}=\{\text{equisingular deformations of $\F$ over $P^\point$}\}/\sim_{\mc C^{\mr{ex}}}.\]

\begin{lema}\label{sigmaid}
Let $\F$ be a germ of foliation  and let $\phi$ be a \Cex-homeomorphism that conjugates $\F$ to itself. Assume that $\F$ is a finite type generalized curve and that the lifting $\phi^\sharp$ of $\phi$ through the reduction of singularities of $\F$ leaves invariant each irreducible component of the exceptional divisor $\mc E_\F$.
Then for any pointed manifold $P^\point$ the pull-back map $\phi^\ast:\mr{Def}^{P^\point}_{\F}\iso \mr{Def}^{P^\point}_{\F}$ is the identity map.
\end{lema}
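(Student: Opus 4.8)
The plan is to use the functoriality and bijectivity properties of the pull-back map $\phi^*$ recorded in Theorem~\ref{phi*}, together with the uniqueness (``rigidity'') of $\mathcal C^{\mathrm{ex}}$-universal deformations stated in Remark~\ref{rempreliminaires} and Definition~\ref{defdefuniv}. Fix a pointed manifold $P^\point=(P,t_0)$ and an equisingular deformation $(\un\F_{P^\point},\iota)$ of $\F$. We must show that $(\un\F_{P^\point},\iota)$ and $\phi^*([\un\F_{P^\point},\iota])$ are $\mathcal C^{\mathrm{ex}}$-conjugated as deformations of $\F$. By definition of $\phi^*$ from Theorem~\ref{phi*} (applied with $\G=\F$ and the excellent self-conjugacy $\phi$), the class $\phi^*([\un\F_{P^\point},\iota])$ is represented by a deformation $(\un\G_{P^\point},\delta)$ of $\F$ equipped with an excellent conjugacy of families $\Phi:\un\G_{P^\point}\to\un\F_{P^\point}$ satisfying $\Phi\circ\delta=\iota\circ\phi$. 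So the issue is entirely about what the homeomorphism $\phi$ does at the level of the marked embedding, i.e. whether $\delta^{-1}\circ\phi\circ\iota$ — which equals $\phi$ transported by $\iota$, hence is $\mathcal C^{\mathrm{ex}}$-isotopic to the identity on the special fiber — can be absorbed.

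The key step is to invoke Theorem~\ref{teoextensionequiv}: the hypothesis that $\phi^\sharp$ leaves invariant each irreducible component of $\mc E_\F$ is exactly the hypothesis of that theorem, so $\phi$ extends to a $\mathcal C^{\mathrm{ex}}$-automorphism $\Psi$ of the whole family $\un\F_{P^\point}$, with $\Psi_{t_0}=\phi$ on the special fiber. Then $\Phi':=\Psi^{-1}\circ\Phi$ is an excellent conjugacy $\un\G_{P^\point}\to\un\F_{P^\point}$ of families, and on the special fiber
\[
\Phi'\circ\delta=\Psi_{t_0}^{-1}\circ\Phi\circ\delta=\phi^{-1}\circ(\iota\circ\phi)=\iota\,.
\]
Thus $\Phi'$ is a conjugacy of the \emph{deformations} $(\un\G_{P^\point},\delta)$ and $(\un\F_{P^\point},\iota)$ in the sense of Definition~\ref{defdefuniv}, which means precisely $\phi^*([\un\F_{P^\point},\iota])=[\un\F_{P^\point},\iota]$ in $\mr{Def}_\F^{P^\point}$. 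Since $(\un\F_{P^\point},\iota)$ was arbitrary, $\phi^*$ is the identity on $\mr{Def}_\F^{P^\point}$.

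The main obstacle — and the only genuine input beyond formal bookkeeping — is Theorem~\ref{teoextensionequiv}, namely the extension of a component-preserving excellent self-homeomorphism of the special fiber to the whole equisingular family; once that is granted the argument is a one-line cancellation. I would therefore present the proof exactly in the order above: (i) unwind the definition of $\phi^*$ from Theorem~\ref{phi*} to produce $(\un\G_{P^\point},\delta)$ and $\Phi$ with $\Phi\circ\delta=\iota\circ\phi$; (ii) apply Theorem~\ref{teoextensionequiv} to extend $\phi$ to $\Psi$; (iii) set $\Phi'=\Psi^{-1}\circ\Phi$ and check $\Phi'\circ\delta=\iota$; (iv) conclude $\phi^*=\mr{id}$ by arbitrariness of the deformation. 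One should also note, for the degenerate case, that if $P^\point$ is a point there is nothing to prove; this is subsumed in the general argument.
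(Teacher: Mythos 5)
Your deduction is formally valid, but as a proof of Lemma~\ref{sigmaid} it is circular with respect to the paper: the single nontrivial ingredient you invoke, Theorem~\ref{teoextensionequiv}, is proved in the paper precisely by applying Lemma~\ref{sigmaid}. Indeed, the paper's proof of Theorem~\ref{teoextensionequiv} is your computation read backwards: one takes a representative $(\un\G_{Q^\point},\delta)$ of $\phi^*([\un\F_{Q^\point},\iota])$ with $\Phi\circ\delta=\iota\circ\phi$ (Theorem~\ref{phi*}), uses the equality $\phi^*([\un\F_{Q^\point},\iota])=[\un\F_{Q^\point},\iota]$ — i.e.\ the very lemma you are asked to prove — to produce $\Psi$ with $\Psi\circ\iota=\delta$, and then $\Phi\circ\Psi$ is the desired extension of $\phi$ to the family. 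So, modulo the formal bookkeeping supplied by Theorem~\ref{phi*}, the extension theorem and the lemma are essentially equivalent statements; citing one to prove the other establishes nothing unless you give an independent proof of the extension theorem, which the paper does not contain and which you do not attempt. (Your cancellation in step (iii) is also slightly loose — the correct identity is $\Psi_{t_0}^{-1}\circ\iota=\iota\circ\phi^{-1}$, from which $\Phi'\circ\delta=\iota$ follows — but that is a minor point compared with the circularity.)

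The genuine content of Lemma~\ref{sigmaid}, absent from your plan, is where the hypotheses actually enter. The paper first uses the isomorphism of functors $\xi:\mr{Def}\iso\mr{Fac}$ (Theorem~\ref{xi}) to reduce the statement to showing that the induced map $[\phi^\ast]$ on $H^1(\A_\F,\T_\F)$ is the identity. Since $\phi^\sharp$ fixes every irreducible component, the induced graph morphism is the identity, and by Theorem~\ref{geometric-basis} it suffices to check that $\phi^{\sharp\ast}$ is the identity on each one-dimensional space $\T_\F(\ge)$, $\ge=\langle D,D'\rangle$. That is done by a local analytic computation at the corner: in the normal forms $\omega_L$, $\omega_N$ one takes the explicit generator $Z$ of $\T_\F(\ge)$, factorizes $\phi^\sharp$ through a local first integral to obtain a one-variable germ $\phi^\flat$, and uses the commutation of $\phi^\flat$ with the holonomy together with \cite[Proposition~6.10]{MMS} to conclude that $\phi^\flat$ preserves $Z^\flat$, hence that $Z-(D\phi^\sharp\cdot Z)\circ\phi^{\sharp\,-1}$ is tangent to $\F^\sharp$. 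This step (together with a unique-continuation argument for sections of $\un{\mc B}_\F/\un{\mc X}_\F$ along a component) is the proof; your reduction to Theorem~\ref{teoextensionequiv} merely relocates it to a statement whose only available proof already presupposes the lemma.
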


We will proceed now to prove Theorem~\ref{equideffam} using Lemma~\ref{sigmaid} which will be proven at the end of the section.  

\begin{dem2}{of Theorem~\ref{equideffam} }
The implications $(\ref{conjdeformations})\Rightarrow(\ref{conjassfam})\Rightarrow(\ref{conjcomp})$ are trivial. 
To see the implication $(\ref{conjcomp})\Rightarrow(\ref{conjdeformations})$, 
 let us denote by $\phi_{\tu}$ the restriction of $\phi$ to the fiber over $\tu$ and let us consider the
\Cex-homeomorphism $\psi:=\iota\circ\delta^{-1}\circ\phi_{\tu}$ which is an automorphism of $\F_{Q^\point}(\tu)$. Notice that $\psi$  is conjugated by $\iota$ to $ \delta^{-1}\circ\phi\circ\iota$, consequently its lifting $\psi^\sharp$ through the reduction of singularities of $\F_{Q^\point}(\tu)$ leaves invariant each irreducible component of the exceptional divisor  $\mc E_{\F_{Q^\point}(\tu)}$.

We will see now that the automorphism $\psi$ extends to a \Cex-automorphism $\Psi$ of the family $\un\F_{Q^\point}$.
 Let $j$ be the canonical embedding of the special fiber $\F_{Q^\point}(\tilde u)$ in the family $\un\F_{Q^\point}$, so that $(\un\F_{Q^\point},j)$ is a deformation of $\F_{Q^\point}(\tilde u)$.
 Theorem \ref{phi*} provides a deformation $(\un{\mc {K}}_{Q^\point},k)$  of $\F_{Q^\point}(\tilde u)$ that is conjugated to $(\un\F_{Q^\point},j)$ by a \Cex-homeomorphism germ $\Theta:\un{\mc {K}}_{Q^\point}\to\un\F_{Q^\point}$ with $\Theta\circ k=j\circ\psi$. By definition of $\psi^*$ we have $[\un{\mc {K}}_{Q^\point},k]= \psi^*([\un\F_{Q^\point},j])$.
  According to  Lemma~\ref{sigmaid} we have
$\psi^\ast([\un\F_{Q^\point},j])=[\un\F_{Q^\point},j]$. 
This means that $(\un{\mc {K}}_{Q^\point},k)$ is $\mc C^\mr{ex}$-conjugated to $(\un\F_{Q^\point},j)$  and  there is a \Cex-homeomorphism germ $\Xi$ such that 
$\Xi(\un\F_{Q^\point})=\un{\mc {K}}_{Q^\point}$ and $\Xi\circ j=k$. 
Hence $\Theta\circ\Xi\circ j=j\circ\psi$, i.e. $\Psi:=\Theta\circ\Xi$  is a \Cex-automorphism of $\un\F_{Q^\point}$ which extends~$\psi$.

To end the proof we notice that the \Cex-homeomorphism $\Phi:=\phi\circ\Psi^{-1}$ satisfies $\Phi(\un\F_{Q^\point})=\un\G_{Q^\point}$ and 
\[\Phi\circ\iota=\Phi_{\tu}\circ\iota=\phi_{\tu}\circ\Psi_{\tu}^{-1}\circ\iota=\phi_{\tu}\circ\psi^{-1}\circ\iota=\phi_\tu\circ\phi_\tu^{-1}\circ\delta\circ\iota^{-1}\circ\iota=\delta.\]
Hence the $\Phi$ is a \Cex-conjugacy between the deformations $(\un\F_{Q^\point},\iota)$ and $(\un\G_{Q^\point},\delta)$.
\end{dem2}

Before starting the proof of Lemma~\ref{sigmaid}, let us recall the functor representation result in \cite{MMSII} that we will use.\\

Let $\mbf{Fol}$ be the category of germs of  generalized curves on $(\C^2,0)$ and excellent conjugacies. We denote by $\mbf{Fol}_{\mbf{ft}}\subset\mbf{Fol}$ the full subcategory consisting in finite type foliations.
Let $\mbf{Man}^\point\subset\mbf{Set}^\point$ be the categories of pointed  complex manifolds and sets.
In  \cite[\S5.3 and \S1.2]{MMSII} we have introduced the contravariant functor
$\mbf{Fol_{ft}}\to\mbf{Man}^\point$, $\F\mapsto H^1(\A_{\F},\T_{\F})$. 
Any excellent conjugacy $\phi:\G\to\F$ induces the graph morphism $\A_\phi:\A_\G\to\A_\F$, $\star\mapsto \phi^\sharp(\star)$, which allows to define a morphism $[\phi^*]:H^1(\A_{\F},\T_{\F})\to H^1(\A_{\G},\T_{\G})$ in the following way:
\begin{equation*}\label{[phi*]}
[\phi^*]([X_{D,\langle D,D'\rangle}])=[Y_{D,\langle D,D'\rangle}],\quad
Y_{D,\langle D,D'\rangle}=(\phi^{\sharp})^*X_{\A_\phi(D,\langle D,D'\rangle)}\,.
\end{equation*}
We also considered the contravariant functor  $\mr{Fac}:\mbf{Man}^\point\times\mbf{Fol_{ft}}\to\mbf{Set}^\point$ defined by
\[\mr{Fac}:(P^\point,\F)\mapsto\mc O(P^\point,H^1(\A_{\F},\T_{\F})^\point)\,,\]
and if $\mu:Q^\point\to P^\point $ and $\phi:\G\to\F$ are morphisms in $\mbf{Man}^\point$ and $\mbf{Fol_{ft}}$ respectively,  then
 $\mr{Fac}_{\phi}^{\mu}$
 sends $\lambda\in \mc O(P^\point,H^1(\A_{\F},\T_{\F})^\point)$ to $[\phi^*]\circ\lambda\circ\mu\in\mc O(Q^\point,H^1(\A_{\G},\T_{\G})^\point)$.\\
 
On the other hand,  Theorem~\ref{phi*} 
allows us to consider the correspondence
\[\mr{Def}:(P^\point,\F)\mapsto\mr{Def}_\F^{P^\point}\]
together with the morphisms 
\[\mr{Def}_\phi^\mu:=\phi^*\circ\mu^*=\mu^*\circ\phi^*.\]
According to \cite[Theorem~3.11]{MMSII} 
$\mr{Def}:\mbf{Man}^\point\times\mbf{Fol_{ft}}\to \mbf{Set}^\point$ is a
contravariant functor. The main  result in \cite{MMSII} is:

\begin{teo}[{\cite[Theorem 6.3]{MMSII}}]\label{xi}
There is an isomorphism of functors 
\[\xi:\mr{Def}\iso\mr{Fac}.\]
\end{teo}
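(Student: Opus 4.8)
The plan is to realize $\mr{Def}_\F$ as the functor co-represented by the pointed vector space $(H^1(\A_\F,\T_\F),0)$, with the Kodaira--Spencer map providing the identification of the representing germ with this space, and then to verify that this representation is natural in $\F$. First I would fix, for each finite type generalized curve $\F$, a distinguished $\mc C^{\mr{ex}}$-universal deformation $\widehat{\un\F}$ over $(H^1(\A_\F,\T_\F),0)$ whose Kodaira--Spencer map $\mr{KS}:T_0H^1(\A_\F,\T_\F)\to H^1(\A_\F,\T_\F)$ is the canonical identity. To build it, pick by surjectivity of the moduli map (Theorem~\ref{localuniv}(\ref{thmmod})) a point $\tu\in\mbf U$ and an excellent conjugacy $\psi:\F\to\F_{\mbf U}(\tu)$; transporting the universal deformation $\un\F_{\mbf U,\tu}$ by $\psi^\ast$ (Theorem~\ref{phi*}) yields a universal deformation of $\F$ over $(\C^\tau,\tz)$. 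By Theorem~\ref{localuniv}(\ref{thmlocuniv}) and the criterion of Theorem~\ref{universal-criterion} its Kodaira--Spencer map is an isomorphism $\C^\tau\iso H^1(\A_\F,\T_\F)$; reparametrizing $(\C^\tau,\tz)$ by the induced affine biholomorphism onto $(H^1(\A_\F,\T_\F),0)$, and using that $\mr{KS}$ transforms by the differential under pull-back, I obtain $\widehat{\un\F}$ with $\mr{KS}=\mr{id}$. Concretely $\widehat{\un\F}$ is the explicit flow-gluing family of the proof of Theorem~\ref{localuniv}, now parametrized linearly by $H^1(\A_\F,\T_\F)=\bigoplus_{(D,\ge)\in\mc A}\T_\F(s_\ge)$ through the geometric basis of Theorem~\ref{geometric-basis}.

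Next I would define the natural transformation. For a pointed manifold $P^\point$ and a class $c\in\mr{Def}_\F^{P^\point}$, the universality of $\widehat{\un\F}$ furnishes a unique germ $\lambda:P^\point\to(H^1(\A_\F,\T_\F),0)$ with $c$ represented by $\lambda^\ast\widehat{\un\F}$; set $\xi_{(P^\point,\F)}(c):=\lambda\in\mr{Fac}(P^\point,\F)$. Uniqueness of $\lambda$ makes this well defined, and $\lambda\mapsto[\lambda^\ast\widehat{\un\F}]$ is a two-sided inverse (existence of the factorization gives surjectivity, its uniqueness gives injectivity), so each $\xi_{(P^\point,\F)}$ is a bijection. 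Naturality in $P^\point$ is routine: for a map germ $\mu:Q^\point\to P^\point$ one has $\mu^\ast\lambda^\ast\widehat{\un\F}=(\lambda\circ\mu)^\ast\widehat{\un\F}$, so by uniqueness the classifying map of $\mu^\ast c$ is $\lambda\circ\mu$, which is exactly the precomposition by $\mu$ that the functor $\mr{Fac}$ assigns to $\mu$.

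The main obstacle is naturality in $\F$. Given an excellent conjugacy $\phi:\G\to\F$, I must show $\xi_{(P^\point,\G)}\circ\phi^\ast=\mr{Fac}_\phi\circ\xi_{(P^\point,\F)}$, i.e. that if $\lambda$ classifies $c\in\mr{Def}_\F^{P^\point}$ then $[\phi^\ast]\circ\lambda$ classifies $\phi^\ast c\in\mr{Def}_\G^{P^\point}$. Using $\phi^\ast\circ\lambda^\ast=\lambda^\ast\circ\phi^\ast$, writing $c=[\lambda^\ast\widehat{\un\F}]$, this reduces (the case $\lambda=\mr{id}$) to the single identity
\[
\phi^\ast\widehat{\un\F}\;\sim_{\mc C^{\mr{ex}}}\;[\phi^\ast]^\ast\widehat{\un\G}
\]
of deformations of $\G$ over $(H^1(\A_\F,\T_\F),0)$, where $[\phi^\ast]$ is the linear isomorphism induced by $\phi$ (invertible with inverse $[(\phi^{-1})^\ast]$); granting it, $[\lambda^\ast\phi^\ast\widehat{\un\F}]=[([\phi^\ast]\circ\lambda)^\ast\widehat{\un\G}]$ and uniqueness of classifying maps gives the claim.

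To prove that identity, note that both sides are $\mc C^{\mr{ex}}$-universal deformations of $\G$ — the left by Theorem~\ref{phi*}, the right because $[\phi^\ast]$ is a biholomorphism germ and universality is stable under reparametrization (Remark~\ref{rempreliminaires}) — so they differ by a unique reparametrization $\rho$ of $(H^1(\A_\F,\T_\F),0)$. Computing Kodaira--Spencer maps shows both equal $[\phi^\ast]$: indeed $\mr{KS}_{\phi^\ast\widehat{\un\F}}=[\phi^\ast]\circ\mr{KS}_{\widehat{\un\F}}=[\phi^\ast]$, while $\mr{KS}_{[\phi^\ast]^\ast\widehat{\un\G}}=\mr{KS}_{\widehat{\un\G}}\circ[\phi^\ast]=[\phi^\ast]$, forcing $d\rho_0=\mr{id}$. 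The delicate, and genuinely crucial, point is to upgrade this first-order agreement to an equality of germs, since KS alone only pins down $\rho$ to first order. Here I would exploit the explicit linear flow-gluing description of $\widehat{\un\F}$ and $\widehat{\un\G}$: an excellent $\phi$ lifts to $\phi^\sharp$ conjugating $\F^\sharp$ to $\G^\sharp$ and carries the basic vector fields $X_\ge$ entering the gluing to those defining $[\phi^\ast]$ via $(\phi^\sharp)^\ast$, so $\phi^\ast$ transforms the family $\widehat{\un\F}$ exactly onto $[\phi^\ast]^\ast\widehat{\un\G}$, not merely to first order; hence $\rho=\mr{id}$. This yields the naturality in $\F$ and completes the construction of the isomorphism of functors $\xi:\mr{Def}\iso\mr{Fac}$.
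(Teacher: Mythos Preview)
This theorem is not proved in the present paper: it is quoted verbatim from \cite[Theorem~6.3]{MMSII} and used as a black box in the proof of Lemma~\ref{sigmaid}. So there is no ``paper's own proof'' to compare against here. What I can do is assess your argument on its own terms.

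Your outline is the natural Yoneda-style strategy (exhibit a representing object for $\mr{Def}_\F$, identify it with $H^1(\A_\F,\T_\F)$ via the Kodaira--Spencer map, then check naturality), and the bijection $\xi_{(P^\point,\F)}$ together with naturality in $P^\point$ are fine. There are, however, two genuine problems.

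\medskip

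\textbf{Circularity.} You invoke Theorem~\ref{universal-criterion} (which is \cite[Theorem~6.7]{MMSII}) and Theorem~\ref{phi*} (whose second half is \cite[Corollary~6.8]{MMSII}) to produce and recognise $\widehat{\un\F}$. In \cite{MMSII} these results come \emph{after} Theorem~6.3 and are deduced from it; using them to prove Theorem~\ref{xi} is circular across the two papers. A self-contained argument must build $\xi$ directly from the cocycle description of deformations, not via the universality criterion.

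\medskip

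\textbf{Naturality in $\F$.} Even granting the ingredients, your key step does not close. You fix, for every $\F$, a universal deformation $\widehat{\un\F}$ with $\mr{KS}=\mr{id}$; this already involves several independent choices (a point $\tu$, a conjugacy $\psi$, a set $\mc A$ of active edges, lifts $X_\ge$ of generators of $\T_\F(\ge)$, the cocycle $(\varphi_{D,\ge})$). For a given $\phi:\G\to\F$ you then need $\phi^\ast\widehat{\un\F}\sim_{\mc C^{\mr{ex}}}[\phi^\ast]^\ast\widehat{\un\G}$. Your computation shows only $d\rho_0=\mr{id}$ for the comparison reparametrization $\rho$; to get $\rho=\mr{id}$ you appeal to the ``explicit flow-gluing'', asserting that $\phi^\sharp$ carries the data defining $\widehat{\un\F}$ to the data defining $\widehat{\un\G}$. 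But those data were chosen independently for $\F$ and for $\G$: the lifts $X_\ge^\G$ need not equal $(\phi^\sharp)^\ast X_\ge^\F$ (they agree only modulo $\un{\mc X}$ and up to a scalar in each $\T$-line), the active-edge sets and the cocycles $(\varphi_{D,\ge})$ need not correspond under $\phi^\sharp$, and you cannot retroactively make all such choices $\phi$-compatible for \emph{every} morphism $\phi$ simultaneously (think of two different $\phi,\phi'$ between the same pair, or of $\mr{Aut}^{\mr{ex}}(\F)$). So the passage from $d\rho_0=\mr{id}$ to $\rho=\mr{id}$ is unjustified, and with it the naturality in $\F$.

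The way \cite{MMSII} avoids this is to define $\xi$ intrinsically from the deformation (via the cocycle $(\Psi_{D'}\circ\Psi_D^{-1})$ produced by a good trivializing system, globalised in the parameter), rather than by comparison with a pre-chosen universal object; naturality then follows from the functoriality of that cocycle construction, with no need to pin down a reparametrization beyond first order.
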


\bigskip

\begin{dem2}{of Lemma~\ref{sigmaid}} 
Thanks to Theorem~\ref{xi} it suffices to  prove that the  morphisms 
\[\mr{Fac}^{\mr{id}_{P^\point}}_{\phi} : 
\mc{O}(P^\point, H^1(\A_\F, \mc T_{\F})^\point)
\to
 \mc{O}(P^\point, H^1(\A_\F, \mc T_{\F})^\point)\,,
\quad
\lambda\mapsto [\phi^\ast]\circ \lambda\,,
\]
 are the identity maps.
 Indeed the naturality of $\xi$ gives the following commutative diagrams 
\[
\xymatrix{
\mr{Def}^{P^\point}_{\F}\ar[r]^{\phi^\ast}\ar[d]^{\rotatebox{90}{\ensuremath{\sim}}}_{\xi^{P^\point}_{\F}}& \mr{Def}^{P^\point}_{\F}\ar[d]_{\rotatebox{90}{\ensuremath{\sim}}}^{\xi^{P^\point}_{\F}}\\
\mc{O}(P^\point, H^1(\A_\F, \mc T_{\F})^\point)
\ar[r]_{\mr{Fac}^{\mr{id}_{P^\point}}_{\phi}}&
 \mc{O}(P^\point, H^1(\A_\F, \mc T_{\F})^\point)\,.
}
\]
The map $\mr{Fac}_\phi^{\mr{id}_{P^\point}}$ is the identity if and only if
the pull-back map $[\phi^\ast]:H^1(\A_\F, \mc T_{\F})\longrightarrow H^1(\A_\F, \mc T_{\F})$ is the identity. Since each irreducible component $D\in\Ve_{\A_\F}$ is fixed by $\phi^\sharp$,
the induced graph morphism $\A_{\phi}:\A_\F\to\A_\F$  is the identity map and 
the map
 \[[\phi^\ast]:H^1(\A_\F, \mc T_{\F})\longrightarrow H^1(\A_\F, \mc T_{\F})\]
 sends
 $ 
[(X_{D, \msf e})_{(D,\ge)\in\mc I_{\A_\F}}]$ into $
\big[(\phi^{\sharp\ast}(X_{D,\msf e}))_{(D,\ge)\in\mc I_{\A_\F}}\big]$.
Thanks to \cite[Remark~5.10]{MMSII}, it suffices to see that $\phi^{\sharp*}:\T_{\F}(\ge)\to\T_{\F}(\ge)$ is the identity for each $\ge=\langle D,D'\rangle\in\Ed_{\A_\F}$ such that $\T_\F(\ge)$ is one-dimensional.
The germ at $\{m\}=D\cap D'$ of the foliation $\F^\sharp$  is either linearizable non-resonant, or resonant normalizable and non-linearizable.  
Let us fix  a local chart  
\[(z_1,z_2): \Omega\iso \overline{\mb D}_r\times \ov{\mb D}_r,\qquad\overline{\mb D}_r=\{z\in\C: |z|\le r\},\] 
centered at the point  $m$, satisfying
 \[r>1,\quad z(m)=(0,0),\quad D\cup D'=\{z_1z_2=0\}\]
 such that the foliation   $\F^\sharp$ is defined on $\Omega$ either by the $1$-form $\omega=\omega_L$ or by $\omega=\omega_N$, with
\begin{itemize}
\item $\omega_{L} = az_2dz_1+b z_1dz_2$, with $
a,\,b\in \C^*$, $b/a\notin \mb Q\,$,
\item $\omega_N=
az_2(1+\zeta (z_1^az_2^b)^k)dz_1 + bz_1(1+(\zeta-1) (z_1^az_2^b)^k)dz_2$, $a,b,k\in \N^\ast
$,\ $\zeta\in\C$.
\end{itemize}
According to \cite[Lemma~5.4]{MMSII}  in both cases there exists an explicit holomorphic vector field $Z$ on $\Omega$, that is tangent to $\{z_1=1\}$ such that $[Z]$ generates $\T_{\F}(\ge)=\un{\mc B}_{\F,m}/\un{\mc X}_{\F,m}$:
\[Z=z_2\frac{\partial}{\partial z_2} \text{ if } \omega=\omega_L,\qquad Z=\frac{(z_1^az_2^b)^k}{1+\zeta(z_1^az_2^b)^k}z_2\frac{\partial}{\partial z_2} \text{ if }\omega=\omega_N\,.\]

Let us fix a point  $p\in \Omega$ with coordinates $z_1=\varepsilon$, $z_2=0$, where $\varepsilon\in \R_{>0}$ is sufficiently small so that $\phi^\sharp$ is holomorphic on $\Omega_{\varepsilon}:=\{|z_1|, |z_2|\leq \varepsilon\}\subset\Omega$ and $\phi^\sharp(\Omega_\varepsilon)\subset \Omega$. For $q\in\Omega$ we will denote by $Z_q$ the germ of $Z$ at  $q$. 
We must prove the equality $[\phi^{\sharp\,\ast} Z_m]=[Z_m]$ in $\T_\F(\ge)$, or equivalently that $Z_m - (D\phi^\sharp \cdot Z_m )\circ\phi^{\sharp\,-1}$ is tangent to $\F^\sharp$. 
We will use the following fact about the quotient sheaf $\un{\mc B}_{\F}/\un{\mc X}_\F$ of basic and tangent vector fields of $\F^\sharp$, cf. \S\ref{Suniv}:
\begin{enumerate}[\hspace{5mm}-]
\item {\it Let $X$ be a section of the sheaf $\un{\mc B}_{\F}/\un{\mc X}_\F$ restricted to a connected open subset $V$ of an invariant irreducible component  of $\mc E_\F$.  
If the germ of  $X$  at some point $p$ of $V$ is zero, then $X =0$. }
\end{enumerate}
Indeed if $p$ is a regular point, by local triviality, the section is zero along the whole regular part of $V$. The vanishing at the remaining singularities follows by analytic continuation. If $p$ is a singular point, then the germ of $X$ at a regular point close to $p$ is zero and we conclude similarly.

Thanks to this property it suffices to show that at the point $\phi^\sharp(p)$
 the vector field germ $Z_{\phi^\sharp(p)} - (D\phi^\sharp \cdot Z_p)\circ\phi^{\sharp\,-1}$ is tangent to $\F^\sharp$. Let us choose a simple path 
 \[
\gamma :[0,1]\to \{z_1\neq 0\,,\, z_2=0\}\,,\quad
\gamma(0)=p\,,\quad
\gamma(1)=\phi^\sharp(p)\,,
 \]
and a germ at $p$  of   holomorphic submersion 
$
I_p : (\Omega, p)\to (\C,0)$ constant on the leaves of $\F^\sharp$ whose restriction to $\{z_1=\varepsilon\}$ is equal to $z_2\varepsilon^{\frac{a}{b}}$. Let us denote by $I_{\phi^\sharp(p)} : (\Omega, \phi^\sharp(p))\to (\C,0)$ the analytic extension of $I_p$ along $\gamma$, which coincides with  its extension as first integral of $\F^\sharp$. The vector field $Z$ being basic, the  germ of  holomorphic  vector field on $(\C,0)$ 
\[
Z^\flat=z\DD{z}\quad \hbox{if}\quad \omega=\omega_L\,,\qquad \hbox{or} \qquad Z^\flat=\frac{z^{bk}}{1+\zeta z^{bk}}z\DD{z}\quad\hbox{if}\quad \omega=\omega_N\,,
\]
satisfies the relations
\begin{equation}\label{DIZ}
DI_p\cdot Z_{p} = Z^\flat\circ I_p\quad \hbox{and}\quad DI_{\phi^\sharp(p)}\cdot Z_{\phi^\sharp(p)}=Z^\flat\circ I_{\phi^\sharp(p)}\,,
\end{equation}
the second equality resulting from the first one by analytic extension. 
On the other hand, the germ of $\phi^\sharp$ at $p$ factorizes through the first integrals, inducing a biholomophism germ $\phi^\flat:(\C,0)\to(\C,0)$ such that 
\begin{equation}\label{defvarphiflat}
I_{\phi^\sharp(p)}\circ \phi^\sharp=\phi^\flat\circ I_p\,.
\end{equation}  
Using the chain rule we have:
 \begin{align}
 DI_{\phi^\sharp(p)}\cdot &\big((D\phi^\sharp\cdot Z_p)\circ\phi^{\sharp\,-1}\big)
 = \big((DI_{\phi^\sharp(p)}\circ\phi^\sharp)\cdot D\phi^\sharp\cdot Z_p\big)\circ\phi^{\sharp\,-1}\nonumber\\
&=\big( D(I_{\phi^\sharp(p)}\circ \phi^\sharp)\cdot Z_p\big)\circ\phi^{\sharp\,-1}
\stackrel{(\ref{defvarphiflat})}{=}\big( D( \phi^\flat\circ I_p)\cdot Z_p\big)\circ\phi^{\sharp\,-1}\nonumber\\
&
=\big((D\phi^\flat\circ I_p)\cdot D I_p\cdot Z_p\big)\circ\phi^{\sharp\,-1}\stackrel{(\ref{DIZ})}{=}
\big((D\phi^\flat\circ I_p)\cdot (Z^\flat\circ I_p)\big)\circ\phi^{\sharp\,-1}\nonumber\\
&
=(D\phi^\flat\cdot Z^\flat)\circ I_p\circ\phi^{\sharp\,-1}\,.\label{DIZ2}
 \end{align}
 Since $\phi^\sharp$ is defined on a neighborhood of the singular point $m$, $\phi^\flat$ commutes with the biholomorphism  
 of holonomy of $\F^\sharp$ along $D$ and around $m$. According to \cite[Proposition~6.10]{MMS} (cases (L) and (R)) there is some $t_1\in \C$ and a linear periodic map $\ell:\C\to\C$ such that:
 \begin{equation*}
  \phi^\flat=\ell\circ \exp(Z^\flat)[t_1]\,,\quad
 \ell^\ast (Z^\flat)=Z^\flat\,;
 \end{equation*}
therefore
$\phi^\flat$ leaves $Z^\flat$ invariant. Hence the equality  $D\phi^\flat\cdot Z^\flat=Z^\flat\circ \phi^\flat$ holds and using it we obtain
 \begin{align*}
  DI_{\phi^\sharp(p)}\cdot \big((D\phi^\sharp\cdot Z_p)\circ\phi^{\sharp\,-1}\big)
 &\stackrel{(\ref{DIZ2})}{=}
 (D\phi^\flat\cdot Z^\flat)\circ I_p\circ\phi^{\sharp\, -1}=
 Z^\flat\circ \phi^\flat\circ I_p\circ\phi^{\sharp\,-1}\\ &
 \stackrel{(\ref{defvarphiflat})}{=} Z^\flat\circ  I_{\phi^\sharp(p)}\circ \phi^\sharp  \circ\phi^{\sharp\,-1}
 \stackrel{(\ref{DIZ})}{=} DI_{\phi^\sharp(p)}\cdot Z_{\phi^\sharp(p)}\,.
\end{align*}
We finally have the equality
\[
  DI_{\phi^\sharp(p)}\cdot \big(Z_{\phi^\sharp(p)} - (D\phi^\sharp \cdot Z_p)\circ\phi^{\sharp\,-1}\big)=0\,,
\]
that shows that the vector field germ $Z_{\phi^\sharp(p)} - (D\phi^\sharp \cdot Z_p)\circ\phi^{\sharp\,-1}$  is tangent to $\F^\sharp$. 
\end{dem2}

\bibliographystyle{plain}

\end{document}